\newcommand{\R}{\mathbb R}
\newcommand{\rd}{\mathrm{d}}
\newtheorem{theorem}{Theorem}[section]
\newtheorem{corollary}[theorem]{Corollary}
\newtheorem{lemma}[theorem]{Lemma}
\newtheorem{proposition}[theorem]{Proposition}
\numberwithin{equation}{section}
\newcommand{\e}{\varepsilon}
\begin{document}
\title{A variational approach  to a stationary free boundary problem modeling MEMS}
\author{Philippe Lauren\c cot}
\address{Institut de Math\'ematiques de Toulouse, UMR~5219, Universit\'e de Toulouse, CNRS, F--31062 Toulouse Cedex 9, France} 
\email{laurenco@math.univ-toulouse.fr}
\author{Christoph Walker}
\address{Leibniz Universit\"at Hannover, Institut f\" ur Angewandte Mathematik, Welfengarten 1, D--30167 Hannover, Germany} 
\email{walker@ifam.uni-hannover.de}
\keywords{}
\subjclass{}
\date{\today}
%
\begin{abstract}
A variational approach is employed to find stationary solutions to a free boundary problem modeling an idealized electrostatically actuated MEMS device made of an elastic plate coated with a thin dielectric film and suspended above a rigid ground plate. The model couples a non-local fourth-order equation for the elastic plate deflection to the harmonic electrostatic potential in the free domain between the elastic and the ground plate. The corresponding energy is non-coercive reflecting an inherent singularity related to a possible touchdown of the elastic plate. Stationary solutions are constructed using a constrained minimization problem. A by-product is the existence of at least two stationary solutions for some values of the applied voltage.
\end{abstract}
%
\maketitle
%
%
%
\pagestyle{myheadings}
\markboth{\sc{Ph.  Lauren\c cot \& Ch. Walker}}{\sc{Variational approach  to a stationary MEMS model}}
%
%
\section{Introduction} \label{sec:int}

Microelectromechanical systems (MEMS) play a key r\^ole in many electronic devices nowadays and include micro-pumps, optical micro-switches, and sensors, to name but a few \cite{PB03}. Idealized electrostatically actuated MEMS consist of an elastic plate lying above a fixed ground plate and held clamped along its boundary. A Coulomb force induced by the application of a voltage difference across the device deflects the elastic plate. It is known from applications that a stable configuration is only obtained for voltage differences below a certain critical threshold as above this value the elastic plate may ``pull in'' on the ground plate. 

In a simplified and re-scaled geometry when presupposing zero variation in transversal direction (see Figure~\ref{MEMSfig}), the stationary problem can be described as finding the plate deflection $u=u(x)\in (-1,\infty)$ on the interval $I:= (-1,1)$ according to
\begin{align}
\beta \partial_x^4 u(x) - \left(\tau+a\|\partial_x u\|_{L_2(I)}^2\right) \partial_x^2 u(x) & = - \lambda \left( \varepsilon^2 |\partial_x \psi(x,u(x))|^2 + |\partial_z \psi(x,u(x))|^2 \right)\, , \quad x\in I\ , \label{ueq} \\
u(\pm 1) = \partial_x u(\pm 1) & = 0\, , \label{ubc}
\end{align}
along with the electrostatic potential $\psi=\psi(x,z)$ satisfying
\begin{eqnarray}
\varepsilon^2 \partial_x^2 \psi + \partial_z^2 \psi & = & 0\ , \quad (x,z)\in \Omega(u)\ , \label{psieq} \\
\psi(x,z) & = & \frac{1+z}{1+u(x)}\ ,\quad  (x,z)\in \partial\Omega(u)\ , \label{psibc}
\end{eqnarray}
in the region
\begin{equation*}
\Omega(u) := \left\{ (x,z)\in I\times\mathbb{R}\ :\ -1 < z < u(x) \right\} 
\end{equation*}
between the two plates. In equation \eqref{ueq}, the fourth-order term $\beta \partial_x^4 u$ with $\beta>0$ reflects plate bending while the linear second-order term $\tau \partial_x^2 u$ with $\tau\ge 0$ and the non-local second-order term $a\|\partial_x u\|_{L_2(I)}^2 \partial_x^2 u$ with $a\ge 0$ and
$$ \|\partial_x u\|_{L_2(I)}^2 :=\int_{-1}^1\vert \partial_x u\vert^2\,\rd x
$$
account for external stretching and self-stretching forces generated by large oscillations, respectively. The right-hand side of \eqref{ueq} is due to  the electrostatic forces exerted on the elastic plate with parameter $\lambda>0$ proportional to the square of the applied voltage difference and the device's aspect ratio $\varepsilon >0$. The boundary conditions \eqref{ubc} mean that the elastic plate is clamped. According to \eqref{psieq}-\eqref{psibc}, the electrostatic potential is harmonic in the region $\Omega(u)$ enclosed by the two plates with value~1 on the elastic plate and value~0 on the ground plate. We refer the reader e.g. to  \cite{EGG10,PB03,LWxxx} and the references therein for more details on the derivation of the model.

\begin{figure}
\centering\includegraphics[scale=.6]{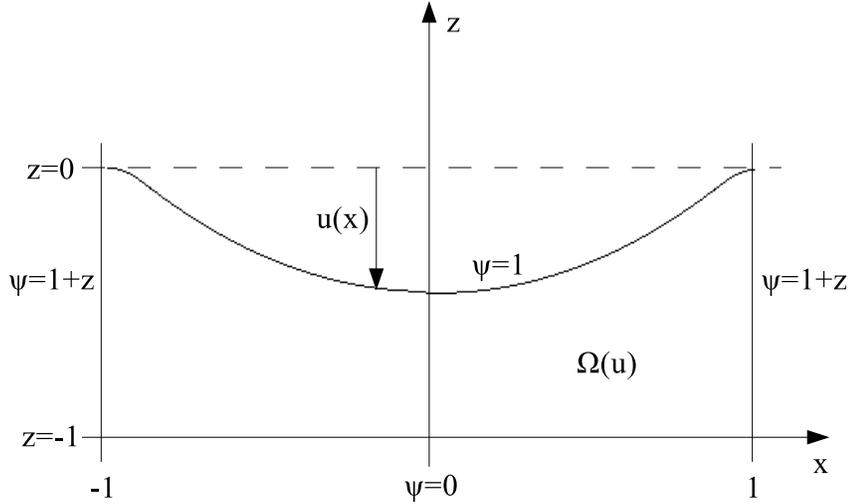}
\caption{\small Idealized electrostatic MEMS device.}\label{MEMSfig}
\end{figure}

A crucial feature of the model is the singularity arising in the term $\partial_z\psi(x,u(x))$ of \eqref{ueq} when $u(x)=-1$ (due to $\psi(x,-1)=0$ and $\psi(x,u(x))=1$), i.e. when the elastic plate touches down on the ground plate. The strength of this instability is in some sense tuned by the parameter $\lambda$ and it is thus expected that solutions to \eqref{ueq}-\eqref{psibc} only exist for small values of $\lambda$ below a certain threshold. Obviously, the stable operating conditions of MEMS devices and hence the existence of stationary solutions are of utmost importance in applications. Questions related to the pull-in threshold were the focus of a very active research in the recent past, however, almost exclusively dedicated to the simplified {\it small gap model} obtained by formally setting $\varepsilon =0$ in \eqref{ueq}-\eqref{psibc}. This reduces the problem to a singular nonlinear eigenvalue problem for $u$ of the form
\begin{equation}
\beta \partial_x^4 u(x) - \left(\tau+a\|\partial_x u\|_{L_2(I)}^2\right) \partial_x^2 u(x)  = - \lambda \frac{1}{(1+u(x))^2}\ ,\quad x\in I\ , \label{SG}
\end{equation}
subject to the boundary conditions \eqref{ubc} with explicitly given  electrostatic potential $$\psi(x,z)=\frac{1+z}{1+u(x)}\ .$$ For detailed results on the small gap model we refer the reader to \cite{EGG10,LWxxz} and the references therein in which also higher dimensional counterparts are investigated. Roughly speaking, in the one-dimensional (and two-dimensional radially symmetric) fourth-order small gap model with clamped boundary conditions and $a=0$ it is known \cite{LWxxz} that there is a threshold $\lambda_*>0$ such that there are (at least) two solutions to \eqref{SG} for $\lambda\in (0,\lambda_*)$, one solution for $\lambda=\lambda_*$, and no solution for $\lambda>\lambda_*$. 

A similar result one might expect also for the free boundary problem \eqref{ueq}-\eqref{psibc} with $\varepsilon >0$. A first step in this direction was made in \cite[Theorem 1.7]{LWxx}, where the following result was shown for $a=0$:

\begin{proposition}\label{00}
Let $a=0$.
\begin{itemize}
\item[(i)] There is $\lambda_s>0$ such that for each $\lambda\in (0,\lambda_s)$ there exists a solution $(U_\lambda,\Psi_\lambda)$ to \eqref{ueq}-\eqref{psibc} with $U_\lambda\in H^4(I)$ satisfying $-1<U_\lambda< 0$ in $I$ and  $\Psi_\lambda\in H^2(\Omega(U_\lambda))$. The mapping $\lambda\mapsto (\lambda,U_\lambda)$ defines a smooth curve in $\R\times H^4(I)$ with $U_\lambda\longrightarrow 0$ in $H^4(I)$ as $\lambda\to 0$.
\item[(ii)] There are $\varepsilon_*>0$ and ${\lambda_c}: (0,\varepsilon_*)\to (0,\infty)$ such that there is no solution $(u,\psi)$ to \eqref{ueq}-\eqref{psibc} for $\varepsilon\in (0,\varepsilon_*)$ and $\lambda>{\lambda_c}(\varepsilon)$.
\end{itemize}
\end{proposition}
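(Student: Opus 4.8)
The plan is to establish the two assertions by entirely different methods: part~(i) is a perturbation result obtained by continuing the explicit trivial solution $(0,1+z)$ available at $\lambda=0$, while part~(ii) is an a priori obstruction obtained by testing the plate equation against a positive eigenfunction.

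For part~(i), the first step is to reformulate the electrostatic problem \eqref{psieq}-\eqref{psibc} on a fixed domain. Using the change of variables $\eta:=(1+z)/(1+u(x))$ I would map $\Omega(u)$ onto the fixed rectangle $\mathcal R:=I\times(0,1)$ and transform \eqref{psieq}-\eqref{psibc} into a uniformly elliptic problem $\mathcal L_u\phi=0$ on $\mathcal R$ for $\phi(x,\eta):=\psi(x,z)$, with coefficients depending on $u$, $\partial_x u$, $\partial_x^2 u$ and with the explicit boundary data $\phi(x,0)=0$, $\phi(x,1)=1$, $\phi(\pm1,\eta)=\eta$. Standard elliptic theory then defines a solution operator $u\mapsto\phi_u$, which I would show to be smooth from a neighbourhood of $0$ in $H^4_D(I):=\{v\in H^4(I):v(\pm1)=\partial_x v(\pm1)=0\}$ into $H^2(\mathcal R)$ as long as $u>-1$ (uniform ellipticity). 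Composing with the trace map and undoing the change of variables yields a smooth nonlinear map $\mathfrak g(u):=\varepsilon^2|\partial_x\psi(\cdot,u)|^2+|\partial_z\psi(\cdot,u)|^2$ from $H^4_D(I)$ into $L_2(I)$. Setting $F(\lambda,u):=\beta\partial_x^4 u-\tau\partial_x^2 u+\lambda\mathfrak g(u)$, one has $F(0,0)=0$ since $u\equiv0$ solves the clamped plate equation and forces $\psi=1+z$. Crucially, the nonlinear coupling enters only through the factor $\lambda$, so $D_uF(0,0)=\beta\partial_x^4-\tau\partial_x^2$ is the clamped beam operator, an isomorphism from $H^4_D(I)$ onto $L_2(I)$ with positive Green function. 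The implicit function theorem then produces a smooth curve $\lambda\mapsto U_\lambda$ on some $(-\lambda_s,\lambda_s)$ with $U_0=0$ and $U_\lambda\to0$ in $H^4(I)$ as $\lambda\to0$; continuity in $H^4(I)\hookrightarrow C(\bar I)$ gives $-1<U_\lambda$ for $\lambda$ small, while $U_\lambda<0$ follows from the strict negativity of $-\lambda\mathfrak g(U_\lambda)$ (note $\partial_z\psi>0$) and positivity of the Green function. Finally $\Psi_\lambda\in H^2(\Omega(U_\lambda))$ follows from elliptic regularity, the boundary of $\Omega(U_\lambda)$ being $C^3$ away from four right-angle (hence convex) corners at which the Dirichlet data is continuous.

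For part~(ii), let $(u,\psi)$ be any solution and let $(\mu_1,\varphi_1)$ be the first eigenpair of $\beta\partial_x^4-\tau\partial_x^2$ under clamped boundary conditions, normalised so that $\varphi_1>0$ on $I$. Testing \eqref{ueq} against $\varphi_1$ and integrating by parts gives
\[
\mu_1\int_I u\,\varphi_1\,\rd x=-\lambda\int_I\big(\varepsilon^2|\partial_x\psi(x,u(x))|^2+|\partial_z\psi(x,u(x))|^2\big)\varphi_1(x)\,\rd x .
\]
Since $u>-1$, the left-hand side is bounded below by $-\mu_1\|\varphi_1\|_{L_1(I)}$. The key is a uniform lower bound on the electrostatic force on the upper plate: I would first note that $u\le0$ (again by positivity of the Green function and negativity of the right-hand side), so the gap satisfies $0<1+u\le1$, and then show that for $\varepsilon$ small the vertical derivative stays close to its small-gap value, namely $\partial_z\psi(x,u(x))\ge c/(1+u(x))\ge c>0$ for some $c=c(\varepsilon)>0$. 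Inserting this bound forces $\lambda c^2\|\varphi_1\|_{L_1(I)}\le\mu_1\|\varphi_1\|_{L_1(I)}$, that is $\lambda\le\lambda_c(\varepsilon):=\mu_1/c^2$, which is exactly the claimed non-existence.

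The main obstacle is the pointwise lower bound on $\partial_z\psi$ at the free top boundary in part~(ii). Unlike the explicit small-gap profile $(1+z)/(1+u)$, for $\varepsilon>0$ the potential solves a genuinely two-dimensional anisotropic elliptic problem on an irregular, $u$-dependent domain, and a boundary-gradient estimate is delicate, particularly near the corners. I expect this to be precisely where the restriction $\varepsilon\in(0,\varepsilon_*)$ is genuinely needed: one compares $\psi$ with the small-gap profile, controls the difference by an $\varepsilon$-dependent elliptic estimate on the transformed rectangle, and thereby transfers the clean bound $1/(1+u)\ge1$ available at $\varepsilon=0$ to small positive $\varepsilon$. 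In part~(i) the analogous technical difficulty—smoothness of $u\mapsto\mathfrak g(u)$ together with $H^2$-regularity up to the corners—is present but more routine, since one works in a neighbourhood of the flat configuration where all estimates are uniform.
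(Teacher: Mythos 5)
Your part~(i) is sound and is exactly the route the paper indicates: the paper does not prove Proposition~\ref{00} itself but quotes it from \cite[Theorem~1.7]{LWxx}, remarking that part~(i) rests on the Implicit Function Theorem. Your setup (transformation to the fixed rectangle, smooth solution operator, $D_uF(0,0)=\beta\partial_x^4-\tau\partial_x^2$ invertible because the coupling carries the factor $\lambda$, sign of $U_\lambda$ from the Boggio-type positivity of the Green function recalled in Lemma~\ref{lea4.5}~(i)) matches that proof.

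Part~(ii), however, has a genuine gap, and it sits precisely at the step you flag as the main obstacle: the pointwise lower bound $\partial_z\psi(x,u(x))\ge c(\varepsilon)>0$ uniform over all putative solutions. As you propose to prove it — comparison of $\psi$ with the small-gap profile via elliptic estimates on the transformed rectangle — the bound uses only the elliptic problem \eqref{psieq}--\eqref{psibc} for a given $u$, and as such a statement it is \emph{false} for every fixed $\varepsilon>0$: take $u\in\mathcal{K}^2$ equal to $-1+\delta'$ on most of $I$ except a narrow ``chimney'' of width $\delta\ll\varepsilon$ around $x=0$ in which $u$ rises back to $0$. Rescaling $x\mapsto x/\varepsilon$ turns \eqref{psieq} into the Laplace equation in a channel of width $2\delta/\varepsilon$ whose lateral walls and cap carry the boundary value $1$; a standard harmonic-measure (screening) estimate gives $1-\psi\le C e^{-c\,\varepsilon/\delta}$ near the cap, hence $\partial_z\psi(0,u(0))\le C\delta^{-1}e^{-c\,\varepsilon/\delta}$, arbitrarily small as $\delta\to0$, even though the small-gap profile predicts the value $1$ there. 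Quantitatively the same defect shows up in your plan: any estimate on the transformed rectangle involves the coefficients of the transformed operator, i.e. $\|\partial_x u\|_{L_\infty(I)}$ and $\|\partial_x^2 u\|_{L_2(I)}$, which are not a priori bounded for an arbitrary solution; bounding them through the plate equation \eqref{ueq} brings $\lambda$ back in, which is circular when the goal is to exclude large $\lambda$.

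What replaces the pointwise bound — in \cite{LWxx}, and in the closely parallel argument this paper gives for Theorem~\ref{thb2} — is an \emph{integral} identity valid for every solution, however wild. Multiplying \eqref{psieq} by $\varphi_1$ (with your normalization $\varphi_1>0$) and using the divergence theorem together with \eqref{a101} yields
\begin{equation*}
\int_{-1}^1 \left(1+\varepsilon^2|\partial_x u|^2\right)\partial_z\psi(x,u(x))\,\varphi_1(x)\,\rd x
= \int_{-1}^1 \partial_z\psi(x,-1)\,\varphi_1(x)\,\rd x
+ \varepsilon^2\int_{\Omega(u)} \partial_x\psi\,\partial_x\varphi_1\,\rd(x,z)\ ,
\end{equation*}
and \eqref{a100} gives $\partial_z\psi(x,-1)\ge 1$, so the first term on the right is at least $\|\varphi_1\|_{L_1(I)}$; the smallness of $\varepsilon$ is needed only to absorb the second term, which is controlled by energy-type quantities rather than pointwise estimates (in my chimney example the huge field in the thin-gap region compensates the tiny field at the cap, which is why the weighted integral survives while the pointwise bound dies). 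The square in $g(u)=(1+\varepsilon^2|\partial_x u|^2)\,|\partial_z\psi(x,u(x))|^2$ is then handled by Jensen's (Cauchy--Schwarz') inequality with respect to the measure $\varphi_1\,\rd x$ — this is what makes the eigenfunction method ``nonlinear'', and it is the same mechanism as identity \eqref{b19} and the Jensen step leading to the bound \eqref{b20} in the proof of Theorem~\ref{thb2}. With the pointwise bound replaced by this weighted integral lower bound your computation with $\varphi_1$ closes; as written, it does not.
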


Actually,  $(U_\lambda,\Psi_\lambda)$ for $\lambda\in (0,\lambda_s)$ is an asymptotically stable steady state for the corresponding dynamic problem. The proof of part~(i) of Theorem~\ref{0} is based on the Implicit Function Theorem and readily extends to the case $a>0$. For part~(ii) one may employ a nonlinear variant of the eigenfunction method involving a positive eigenfunction in $H^4(I)$ associated to the fourth-order operator $\beta\partial_x^4-\tau\partial_x^2$ subject to the clamped boundary condition \eqref{ubc}. For further use we now state the extension of Proposition~\ref{00} (i) to $a>0$.

\begin{theorem}\label{0}
Let $a\ge 0$.
There is $\lambda_s(a)>0$ such that for each $\lambda\in (0,\lambda_s(a))$ there exists a solution $(U_\lambda,\Psi_\lambda)$ to \eqref{ueq}-\eqref{psibc} with $U_\lambda\in H^4(I)$ satisfying $-1<U_\lambda< 0$ in $I$ and  $\Psi_\lambda\in H^2(\Omega(U_\lambda))$. The mapping $\lambda\mapsto (\lambda,U_\lambda)$ defines a smooth curve  in $\R\times H^4(I)$ with $U_\lambda\longrightarrow 0$ in $H^4(I)$ as $\lambda\to 0$.
\end{theorem}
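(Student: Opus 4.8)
The plan is to apply the Implicit Function Theorem in a neighbourhood of the trivial solution corresponding to $\lambda=0$, following the strategy already used for $a=0$ in \cite{LWxx} and checking that the additional non-local term causes no new difficulty.

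First I would decouple the system by solving the electrostatic subproblem for a given deflection. Introducing the space
\[
X := \left\{ v\in H^4(I)\ :\ v(\pm1)=\partial_x v(\pm1)=0 \right\},
\]
I would show that for each $u$ in a neighbourhood of $0$ in $X$ (so that $-1<u$ and $\Omega(u)$ is a genuine domain), the boundary value problem \eqref{psieq}-\eqref{psibc} has a unique solution $\psi_u$. Since $\Omega(u)$ varies with $u$, the natural device is the change of variables $(x,z)\mapsto (x,(1+z)/(1+u(x)))$ mapping $\Omega(u)$ onto the fixed rectangle $I\times(0,1)$; the transformed problem is uniformly elliptic on a fixed domain with coefficients, source term, and boundary data depending smoothly on $u$. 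By elliptic regularity together with the smooth dependence of the solution operator on $u$, I would obtain the traces $\partial_x\psi_u(\cdot,u(\cdot))$ and $\partial_z\psi_u(\cdot,u(\cdot))$ as smooth maps from a neighbourhood of $0$ in $X$ into a suitable trace space, and hence conclude that
\[
g(u)(x) := \varepsilon^2 |\partial_x \psi_u(x,u(x))|^2 + |\partial_z \psi_u(x,u(x))|^2
\]
defines a smooth map from (a neighbourhood of $0$ in) $X$ into $L_2(I)$.

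Next I would set
\[
F(\lambda,u) := \beta \partial_x^4 u - \left(\tau + a\|\partial_x u\|_{L_2(I)}^2\right)\partial_x^2 u + \lambda\, g(u),
\]
viewed as a smooth map $F:\R\times X\to L_2(I)$, and check that $F(0,0)=0$; indeed, for $u=0$ the potential is the $x$-independent function $\Psi_0(x,z)=1+z$, so the equation is satisfied at $\lambda=0$, $u=0$. The crucial observation is the computation of the partial derivative $D_u F(0,0)$. The non-local term $-a\|\partial_x u\|_{L_2(I)}^2\partial_x^2 u$ is cubic in $u$, whence its linearization at $u=0$ vanishes, and the term $\lambda g(u)$ contributes nothing at $\lambda=0$; consequently
\[
D_u F(0,0)\,v = \beta \partial_x^4 v - \tau \partial_x^2 v,
\]
exactly as in the case $a=0$. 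This is the clamped fourth-order operator, a self-adjoint positive definite isomorphism from $X$ onto $L_2(I)$. The Implicit Function Theorem then provides $\lambda_s(a)>0$ and a smooth curve $\lambda\mapsto U_\lambda\in X$ with $F(\lambda,U_\lambda)=0$, $U_0=0$, and $U_\lambda\to 0$ in $H^4(I)$ as $\lambda\to0$.

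It remains to verify the qualitative bounds $-1<U_\lambda<0$ and the regularity of the potential. For $\lambda$ small, $U_\lambda$ is $H^4$-close to $0$, hence $C^1$-close by Sobolev embedding, which forces $-1<U_\lambda$; the corresponding $\Psi_\lambda:=\psi_{U_\lambda}$ then lies in $H^2(\Omega(U_\lambda))$ by elliptic regularity. For the strict bound $U_\lambda<0$, I would use that the forcing $-\lambda g(U_\lambda)$ is strictly negative, since $|\partial_z\psi_{U_\lambda}|^2>0$, combined with the positivity-preserving property of the inverse of the one-dimensional clamped operator $\beta\partial_x^4-\tau\partial_x^2$ (whose Green's function is positive). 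The main obstacle is the smooth dependence of the boundary gradient of $\psi_u$ on $u$, which is the genuinely technical ingredient; however this part is identical to the case $a=0$ treated in \cite{LWxx}, and the only feature specific to $a>0$ is the extra non-local term, which is smooth and has vanishing linearization at the base point, so it leaves $D_u F(0,0)$ unchanged and the argument carries over verbatim.
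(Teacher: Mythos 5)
Your proposal takes essentially the same route as the paper, which proves Theorem~\ref{0} simply by observing that the Implicit Function Theorem argument of \cite{LWxx} for $a=0$ extends to $a>0$ — the key point being exactly your observation that the non-local term $a\|\partial_x u\|_{L_2(I)}^2\partial_x^2 u$ is cubic in $u$ and hence leaves $D_uF(0,0)=\beta\partial_x^4-\tau\partial_x^2$ unchanged. One cosmetic remark: for the strict sign $U_\lambda<0$ the operator in play is $\beta\partial_x^4-\bigl(\tau+a\|\partial_x U_\lambda\|_{L_2(I)}^2\bigr)\partial_x^2$ rather than $\beta\partial_x^4-\tau\partial_x^2$, but since the extra coefficient is a non-negative constant the same Boggio-type sign-preserving property (cf. \cite{LWxy}) applies verbatim.
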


Theorem~\ref{0} in particular ensures the existence of stationary solutions for small values of $\lambda$. However, it leaves open the question whether multiple solutions exist for such values of $\lambda$ which is a remarkable feature of the simplified small gap model as pointed out above. The purpose of the present paper is to give (partially) an affirmative answer. More precisely, we shall prove herein:

\begin{theorem}\label{T1}
For each $\rho>2$ there are $\lambda_\rho>0$, $u_\rho\in H^4(I)$, and $\psi_\rho\in H^2(\Omega(u_\rho))$ such that $(u_\rho,\psi_\rho)$ is a solution to \eqref{ueq}-\eqref{psibc} with $\lambda=\lambda_ \rho$. Both $u_\rho=u_\rho(x)$ and $\psi_\rho=\psi_\rho(x,z)$ are even with respect to $x\in I$ and $-1<u_\rho<0$ in $I$. Moreover, $\lambda_\rho\rightarrow 0$ as $\rho\rightarrow\infty$ and $u_\rho\not= U_{\lambda_\rho}$ for all $\rho>2$ sufficiently large.
\end{theorem}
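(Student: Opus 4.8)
The plan is to construct $u_\rho$ as the solution of a constrained minimization problem, with $\lambda_\rho$ arising as the associated Lagrange multiplier. I introduce the mechanical energy
\[
\mathcal{E}_m(u) := \frac{\beta}{2}\|\partial_x^2 u\|_{L_2(I)}^2 + \frac{\tau}{2}\|\partial_x u\|_{L_2(I)}^2 + \frac{a}{4}\|\partial_x u\|_{L_2(I)}^4
\]
and the (reduced) electrostatic energy
\[
\mathcal{E}_e(u) := \int_{\Omega(u)}\left(\varepsilon^2|\partial_x\psi_u|^2 + |\partial_z\psi_u|^2\right)\rd(x,z),
\]
where $\psi_u$ solves \eqref{psieq}-\eqref{psibc}; critical points of $\mathcal{E}_m-\lambda\mathcal{E}_e$ among clamped configurations are exactly the solutions of \eqref{ueq}-\eqref{psibc}, the boundary term in the shape derivative of $\mathcal{E}_e$ producing the right-hand side of \eqref{ueq}. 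Since $\psi_0(x,z)=1+z$ gives $\mathcal{E}_e(0)=2$ while, for admissible $u\le 0$, Cauchy--Schwarz yields $\int_{-1}^{u(x)}|\partial_z\psi_u|^2\,\rd z\ge 1/(1+u(x))$ and hence $\mathcal{E}_e(u)\ge\int_I \rd x/(1+u)\ge 2$, the value $2$ is the minimal electrostatic energy. This explains the threshold $\rho>2$: I would fix $\rho>2$ and minimize $\mathcal{E}_m$ over the constraint manifold
\[
\mathcal{A}_\rho := \{u\in H^2(I)\ :\ u\ \text{even},\ u(\pm1)=\partial_x u(\pm1)=0,\ -1<u\le 0,\ \mathcal{E}_e(u)=\rho\},
\]
which is nonempty precisely because $\mathcal{E}_e$ ranges over $(2,\infty)$ on $\{-1<u<0\}$.

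For existence of a minimizer I would run the direct method. The functional $\mathcal{E}_m$ is coercive on $H^2(I)$ and convex, hence weakly sequentially lower semicontinuous, so a minimizing sequence $(u_n)$ is bounded in $H^2(I)$ and, along a subsequence, converges weakly in $H^2(I)$ and strongly in $C^1(\bar I)$ to an even, clamped $u_\rho$ with $-1\le u_\rho\le 0$. The delicate point, and the main obstacle, is to prove that $(u_n)$ stays uniformly away from touchdown, i.e. $u_n\ge -1+\eta$ for some $\eta>0$ independent of $n$. I expect this from an interpolation trade-off: a configuration approaching $-1$ must do so either on a macroscopic set, forcing $\int_I\rd x/(1+u)$ and hence $\mathcal{E}_e$ above $\rho$, or through a narrow deep dip, forcing $\|\partial_x^2 u\|_{L_2(I)}^2$ and hence $\mathcal{E}_m(u_n)$ to blow up and contradicting minimality; quantifying this balance between dip width, depth, $\mathcal{E}_e$, and $\mathcal{E}_m$ is what requires care. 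Granting the uniform separation, $\mathcal{E}_e$ is continuous along $(u_n)$, the constraint passes to the limit to give $\mathcal{E}_e(u_\rho)=\rho$, and $u_\rho\in\mathcal{A}_\rho$ realizes the minimum with $-1<u_\rho$; a maximum-principle argument then upgrades $u_\rho\le 0$ to $u_\rho<0$ in $I$.

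Since $u_\rho\not\equiv 0$, one has $D\mathcal{E}_e(u_\rho)\ne 0$, so the Lagrange multiplier rule provides $\lambda_\rho\in\R$ with $D\mathcal{E}_m(u_\rho)=\lambda_\rho\,D\mathcal{E}_e(u_\rho)$, which is the weak form of \eqref{ueq} for $(u_\rho,\psi_{u_\rho})$. Testing this identity with $u_\rho$ gives
\[
\beta\|\partial_x^2 u_\rho\|_{L_2(I)}^2 + \left(\tau+a\|\partial_x u_\rho\|_{L_2(I)}^2\right)\|\partial_x u_\rho\|_{L_2(I)}^2 = \lambda_\rho\int_I \left(\varepsilon^2|\partial_x\psi_{u_\rho}|^2+|\partial_z\psi_{u_\rho}|^2\right)(x,u_\rho(x))\,(-u_\rho(x))\,\rd x,
\]
whose left-hand side is positive and whose right-hand integrand is nonnegative because $-u_\rho>0$; hence $\lambda_\rho>0$. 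As $u_\rho$ is bounded away from $-1$, the domain $\Omega(u_\rho)$ is uniformly nondegenerate, so elliptic regularity gives $\psi_\rho:=\psi_{u_\rho}\in H^2(\Omega(u_\rho))$ and a bootstrap yields $u_\rho\in H^4(I)$; evenness of $u_\rho$ and $\psi_\rho$ is inherited from the even class and the symmetry of the problem.

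It remains to show $\lambda_\rho\to 0$ as $\rho\to\infty$ and distinctness from $U_{\lambda_\rho}$. For the former I would estimate the two sides of the displayed identity separately: an explicit even competitor approaching $-1$ over a controlled region shows that the minimal value $\mathcal{E}_m(u_\rho)$ grows more slowly than $\rho$, bounding the numerator, while the constraint $\mathcal{E}_e(u_\rho)=\rho$ forces the boundary integral on the right to be large, bounding the denominator below by a multiple of $\rho$; the ratio then tends to $0$. Building competitors with the correct balance is the second quantitative difficulty. Finally, once $\lambda_\rho\to 0$, for $\rho$ large we have $\lambda_\rho\in(0,\lambda_s(a))$, so $U_{\lambda_\rho}$ from Theorem~\ref{0} is defined with $U_{\lambda_\rho}\to 0$ in $H^4(I)$, whence $\mathcal{E}_e(U_{\lambda_\rho})\to\mathcal{E}_e(0)=2$. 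Since $\mathcal{E}_e(u_\rho)=\rho\to\infty$, we conclude $u_\rho\ne U_{\lambda_\rho}$ for all $\rho$ sufficiently large, completing the proof.
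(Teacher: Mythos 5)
Your overall strategy is exactly the paper's: minimize $\mathcal{E}_m$ over the constraint set $\mathcal{A}_\rho$ of even, clamped, non-positive deflections with $\mathcal{E}_e(u)=\rho$, obtain \eqref{ueq} as the Lagrange condition, get $\lambda_\rho>0$ by testing with $u_\rho$, and get multiplicity from $\mathcal{E}_e(U_{\lambda_\rho})\to 2$ versus $\mathcal{E}_e(u_\rho)=\rho\to\infty$. Your flagged ``main obstacle'' (uniform separation from touchdown along minimizing sequences) is resolvable precisely along the lines you indicate: this is the paper's Lemma~\ref{leb2}, where the bound $\|\partial_x^2 v\|_{L_2(I)}\le K$ coming from minimality gives, by Taylor expansion at the minimum point, $v(x)\le m+K|x-x_m|^{3/2}$, and then $\rho=\mathcal{E}_e(v)\ge \int_{-1}^1 (1+v)^{-1}\rd x$ forces $1+m\ge \rho^{-3}K^{-2}$. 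So that part of your sketch, while not carried out, is the right mechanism.

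The genuine gap is in the step $\lambda_\rho\to 0$. You assert that ``the constraint $\mathcal{E}_e(u_\rho)=\rho$ forces the boundary integral on the right to be large, bounding the denominator below by a multiple of $\rho$,'' but you give no argument, and this is not a soft fact: $\mathcal{E}_e(u_\rho)$ is a bulk integral over $\Omega(u_\rho)$, whereas the quantity you need to bound from below, $\int_{-1}^1(-u_\rho)\,g(u_\rho)\,\rd x$ with $g(u_\rho)$ the squared trace of $\nabla\psi_{u_\rho}$ on the graph, is a weighted boundary integral; nothing in the variational setup links the two. The paper closes this by a Pohozaev-type computation (proof of Theorem~\ref{thb2}): multiplying \eqref{psieq} by $(1+u)\psi_u-(1+z)$ and by $u\psi_u$, integrating by parts, and using \eqref{a101} yields the exact identity
\begin{equation*}
-\int_{-1}^1 u(x)\left(1+\varepsilon^2|\partial_x u(x)|^2\right)\,\partial_z\psi_u(x,u(x))\,\rd x \;=\; \mathcal{E}_e(u)-2\,,
\end{equation*}
and only then does Jensen's (Cauchy--Schwarz) inequality convert this first-power statement into the needed lower bound $\lambda_u(\rho-2)^2/(2+\varepsilon^2\|\partial_x u\|_{L_2(I)}^2)$ for the quadratic boundary integral, the denominator being controlled by $\|\partial_x u\|_{L_2(I)}^2\le 2\sqrt{\mu(\rho)/\beta}$. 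Without this identity (or a substitute), your ratio argument does not close. Relatedly, your bound on the numerator should be strengthened: the paper shows $\mu(\rho)\le\mathcal{E}_m(\varphi_1)=:$ const uniformly in $\rho$ via the eigenfunction competitor $\eta_\rho\varphi_1$ of Lemma~\ref{lea4.5} (rescaling a fixed profile keeps $\mathcal{E}_m$ bounded while $\mathcal{E}_e(\eta\varphi_1)\to\infty$ as $\eta\to 1$ because $(1+\varphi_1)^{-1}\notin L_1(I)$); with the paper's $(\rho-2)^2$ lower bound this uniform bound is what makes $\lambda_\rho\to 0$.

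Two smaller points. First, your justification of the multiplier rule, ``$u_\rho\not\equiv 0$, hence $D\mathcal{E}_e(u_\rho)\ne 0$,'' is a non sequitur: non-triviality of $u_\rho$ does not imply non-vanishing of $g(u_\rho)$; the latter follows, e.g., from the identity displayed above (its right-hand side equals $\rho-2>0$) or from unique continuation for \eqref{psieq}. Second, you take for granted the continuity and $C^1$-regularity of $u\mapsto\mathcal{E}_e(u)$ and the formula $\partial_u\mathcal{E}_e(u)=-g(u)$ on $H^2$-deflections; these are needed both for the direct method (passing the constraint to the limit) and for the Lagrange rule, and establishing them (via the transformed problem \eqref{g2} and shape-derivative arguments) occupies Sections~\ref{sec:elec} and~\ref{sec: 4} of the paper, so they should at least be stated as required inputs.
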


Theorem~\ref{T1} provides multiple solutions to \eqref{ueq}-\eqref{psibc} for small values of $\lambda$ and is derived by a variational approach. It relies on the observation  that \eqref{ueq} is the Euler-Lagrange equation of the total energy $\mathcal{E}$ given by $\mathcal{E}(u):=\mathcal{E}_m(u)-\lambda \mathcal{E}_e(u)$ with mechanical energy
\begin{equation*}
\mathcal{E}_m(u) := \frac{\beta}{2} \|\partial_x^2 u \|_{L_2(I)}^2 + \frac{1}{2}\left(\tau + \frac{a}{2} \|\partial_x u\|_{L_2(I)}^2\right) \|\partial_x u \|_{L_2(I)}^2 
\end{equation*}
and electrostatic energy
\begin{equation*}
\mathcal{E}_e(u) := \int_{\Omega(u)} \left( \varepsilon^2 |\partial_x  \psi_u|^2 + |\partial_z  \psi_u|^2 \right)\ \mathrm{d}(x,z)\ ,
\end{equation*}
where the electrostatic potential $\psi_u$ is the solution to \eqref{psieq}-\eqref{psibc} associated to the given (sufficiently smooth) deflection $u$. Note that $\mathcal{E}$ is the sum of terms with different signs. The possible pull-in instability thus manifests in the non-coercivity of the energy $\mathcal{E}$, and due to this a plain minimization of the total energy is not appropriate. In fact, using Lemma~\ref{lea4}, it is not difficult to check that $\mathcal{E}$ is not bounded from below for $\lambda>0$ and we therefore take an alternative route and minimize the mechanical energy $\mathcal{E}_m$ constrained to (certain) deflections $u$ with fixed electrostatic energy $\mathcal{E}_e(u)=\rho$. Each minimizer $u_\rho$ of this constrained minimization problem together with the corresponding electrostatic potential $\psi_\rho:=\psi_{u_\rho}$ then yields a solution to \eqref{ueq}-\eqref{psibc} for the corresponding Lagrange multiplier $\lambda=\lambda_\rho$. Though lacking a continuity property with respect to $\rho>2$, the observation that $\mathcal{E}_e(U_\lambda)\rightarrow 2$ as $\lambda\rightarrow 0$ while $\lambda_\rho\rightarrow 0$ for $\mathcal{E}_e(u_\rho)=\rho\rightarrow \infty$ yields multiplicity of of solutions to \eqref{ueq}-\eqref{psibc} for small values of $\lambda$ in the sense that there is at least a sequence $\lambda_j\rightarrow 0$ of voltage values for which there are two different solutions $(u_j,\psi_j)$ (i.e. $\rho=j$ in Theorem~\ref{T1}) and $(U_{\lambda_j},\Psi_{\lambda_j})$ (i.e. $\lambda=\lambda_j$ in Theorem~\ref{0}). Note that, by taking a different sequence $\rho_j\rightarrow\infty$ with $\rho_j\not= j$, we obtain different solutions $(u_{\rho_j},\psi_{\rho_j})$ -- since the electrostatic energies differ -- but with possibly equal voltage values. We conjecture that, as in the simplified small gap model, the solutions constructed in Theorem~\ref{T1} actually lie on a smooth curve.

\medskip

To prove Theorem~\ref{T1} we first solve in Section~\ref{sec:elec} the elliptic problem \eqref{psieq}-\eqref{psibc} for the electrostatic potential $\psi=\psi_u$ for a given deflection $u$ and investigate then its dependence and that of the corresponding electrostatic energy $\mathcal{E}_e(u)$ with respect to $u$. Some technical details needed regarding continuity and differentiability properties of $\mathcal{E}_e$ and the right-hand side of \eqref{ueq} are postponed to Section~\ref{sec: 4}. The constrained minimization problem leading to Theorem~\ref{T1} is studied in Section~\ref{sec:mpc}.

\section{Some properties of the electrostatic energy and potential} \label{sec:elec}

We first focus on the elliptic problem \eqref{psieq}-\eqref{psibc} and investigate its solvability and properties of the corresponding electrostatic energy.

We shall use the following notation. To account for the clamped boundary conditions \eqref{ubc} we introduce, for $s\ge 0$ and $p\ge 2$,
$$
W_{p,D}^{s}(I):=\left\{\begin{array}{lll}
& \{v\in W_p^{s}(I)\,;\, v(\pm 1)=\partial_x v(\pm 1)=0\}\ , & s>\dfrac{3}{2}\ ,\\
& \{v\in W_p^{s}(I)\,;\, v(\pm 1)=0\}\ , & \dfrac{1}{2}<s<\dfrac{3}{2}\ ,\\
&   W_p^{s}(I)\ , & s<\dfrac{1}{2}\ ,
\end{array}
\right.
$$
and write $H_D^s(I):=W_{2,D}^s(I)$.
Similarly, $H^1_D(\Omega(u)):= \{v\in H^{1}(\Omega(u))\,;\, v=0 \ \text{ on }\ \partial\Omega\}$. For $s\ge 1$ we set 
\begin{equation*}
 S^{s} := \left\{ u \in H_{D}^{s}(I)\ : \ u > -1 \;\text{ on }\; I  \right\} \ , \qquad \mathcal{K}^s := \left\{ u \in H_D^s(I)\ :\ -1<u\le 0 \;\text{ on }\; I \right\}\ ,
\end{equation*}
and given $u\in S^1$ we define
\begin{equation}
b_u(x,z) := \left\{
\begin{array}{lcl}
\displaystyle{\frac{1+z}{1+u(x)}} & \text{ for } & (x,z)\in\overline{\Omega(u)}\ , \\
&  & \\
1 &  \text{ for } & (x,z)\in \overline{\Omega(0)}\setminus \overline{\Omega(u)}\ ,
\end{array}
\right. \label{a5}
\end{equation}
with $\Omega(0) = I \times (-1,0)$. Note that, if $u\in \mathcal{K}^1$, then the function $b_u$ belongs to $H^1(\Omega(0)) \cap C(\overline{\Omega(0)})$ which allows us to define $B_u\in H^{-1}(\Omega(0))$ (i.e. the dual space of $H_D^{1}(\Omega(0))$) by setting
\begin{equation}
\langle B_u , \vartheta \rangle := - \int_{\Omega(0)} \left[ \varepsilon^2 \partial_x b_u \partial_x \vartheta + \partial_z b_u \partial_z \vartheta \right]\ \mathrm{d}(x,z)\ , \quad \vartheta\in H^1_D(\Omega(0))\ . \label{a8}
\end{equation}

\subsection{Electrostatic potential} \label{sec:ep}

We first recall the existence and properties of weak solutions to \eqref{psieq}-\eqref{psibc} for $u\in \mathcal{K}^1$ which follow from \cite[Theorem~8.3]{GT01} and the Lax-Milgram Theorem.

\begin{lemma}\label{lea0}
Given $u\in S^1$, there is a unique weak solution $\psi_u\in H^1(\Omega(u))$ to \eqref{psieq}-\eqref{psibc} such that $\psi_u-b_u\in H_D^1(\Omega(u))$.  If, in addition, $u\in \mathcal{K}^1$, then $\psi_u-b_u$ satisfies the variational inequality
\begin{equation}
\begin{split}
\int_{\Omega(u)} \left( \varepsilon^2 |\partial_x(\psi_u-b_u)|^2 + |\partial_z(\psi_u-b_u)|^2 \right)\ \mathrm{d}(x,z) - 2 \langle B_u , \psi_u-b_u \rangle \\
\le \int_{\Omega(u)} \left( \varepsilon^2 |\partial_x\vartheta|^2 + |\partial_z\vartheta|^2 \right)\ \mathrm{d}(x,z) - 2 \langle B_u , \vartheta \rangle
\end{split} \label{a7}
\end{equation}
for all $\vartheta\in H^1_D(\Omega(u))$.
\end{lemma}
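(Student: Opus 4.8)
The plan is to treat the two assertions separately, reducing the existence and uniqueness claim to standard linear elliptic theory and the variational inequality to the classical Dirichlet (minimization) principle.

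For the first statement I would homogenize the boundary condition \eqref{psibc} by seeking $\psi_u$ in the form $\psi_u = b_u + \phi$ with $\phi \in H_D^1(\Omega(u))$. Since $u\in S^1\subset H^1(I)$ embeds into $C(\bar I)$ and satisfies $u(\pm1)=0$ together with $u>-1$ on the compact interval $\bar I$, one has $1+u\ge\delta$ for some $\delta>0$; computing $\partial_x b_u=-(1+z)\,\partial_x u/(1+u)^2$ and $\partial_z b_u=1/(1+u)$ then shows $b_u\in H^1(\Omega(u))$, the $x$-derivative being controlled in $L_2$ by $\partial_x u\in L_2(I)$. The weak formulation of \eqref{psieq}-\eqref{psibc} for $\phi$ amounts to $a(\phi,\vartheta)=-a_b(\vartheta)$ for all $\vartheta\in H_D^1(\Omega(u))$, where $a(v,w):=\int_{\Omega(u)}\left(\varepsilon^2\partial_x v\,\partial_x w+\partial_z v\,\partial_z w\right)\,\rd(x,z)$ and $a_b(\vartheta):=\int_{\Omega(u)}\left(\varepsilon^2\partial_x b_u\,\partial_x\vartheta+\partial_z b_u\,\partial_z\vartheta\right)\,\rd(x,z)$. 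The form $a$ is bounded and, since elements of $H_D^1(\Omega(u))$ vanish on the entire boundary $\partial\Omega(u)$ while $\Omega(u)\subseteq I\times(-1,\max_{\bar I} u)$ is bounded, coercive by the Poincar\'e inequality; the right-hand side is a bounded functional. Lax--Milgram --- equivalently \cite[Theorem~8.3]{GT01} for the uniformly elliptic operator with diagonal coefficients $\mathrm{diag}(\varepsilon^2,1)$ --- then yields a unique $\phi$, hence a unique $\psi_u$.

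For the variational inequality I would restrict to $u\in\mathcal{K}^1$, so that $-1<u\le0$ and hence $\Omega(u)\subseteq\Omega(0)$, and set $\phi:=\psi_u-b_u\in H_D^1(\Omega(u))$. The crucial preliminary observation is that, by its definition \eqref{a5}, $b_u$ is constant (equal to $1$) on $\Omega(0)\setminus\Omega(u)$, so that $\partial_x b_u=\partial_z b_u=0$ there. Consequently, extending any $\vartheta\in H_D^1(\Omega(u))$ by zero to $\Omega(0)$ --- which produces an element of $H_D^1(\Omega(0))$ because $\vartheta$ vanishes on $\partial\Omega(u)$ --- the definition \eqref{a8} of $B_u$ collapses to an integral over $\Omega(u)$, giving $\langle B_u,\vartheta\rangle=-a_b(\vartheta)$. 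The weak formulation from the first part thus reads $a(\phi,\vartheta)=\langle B_u,\vartheta\rangle$ for every $\vartheta\in H_D^1(\Omega(u))$, which is precisely the Euler--Lagrange equation of the quadratic functional $J(\vartheta):=a(\vartheta,\vartheta)-2\langle B_u,\vartheta\rangle$ compared in \eqref{a7}.

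It then remains to read \eqref{a7} as the minimality $J(\phi)\le J(\vartheta)$ and to prove it by the standard convexity argument: writing $\vartheta=\phi+(\vartheta-\phi)$ and expanding the quadratic form gives $J(\vartheta)=J(\phi)+2\big(a(\phi,\vartheta-\phi)-\langle B_u,\vartheta-\phi\rangle\big)+a(\vartheta-\phi,\vartheta-\phi)$, where the middle bracket vanishes by the weak formulation applied to the admissible test function $\vartheta-\phi$ and the last term is nonnegative, whence $J(\vartheta)\ge J(\phi)$. The whole argument is essentially routine elliptic theory, and the single point requiring genuine care --- the main, though modest, obstacle --- is the bookkeeping linking the functional $B_u$, which by design lives on the fixed reference rectangle $\Omega(0)$, to the variational problem posed on the $u$-dependent domain $\Omega(u)$. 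This is handled via the inclusion $\Omega(u)\subseteq\Omega(0)$, the zero-extension of test functions, and the vanishing of the gradient of $b_u$ on $\Omega(0)\setminus\Omega(u)$, and it is exactly this step that forces the sign constraint $u\le0$ (i.e.\ $u\in\mathcal{K}^1$ rather than merely $u\in S^1$) in the second part of the statement.
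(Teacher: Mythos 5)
Your proposal is correct and follows essentially the same route as the paper, which simply invokes \cite[Theorem~8.3]{GT01} and the Lax--Milgram Theorem: existence and uniqueness come from Lax--Milgram after homogenizing the boundary data by $b_u$, and \eqref{a7} is the Dirichlet principle for the associated symmetric coercive form, with the pairing against $B_u$ reduced to an integral over $\Omega(u)$ exactly as you do (zero extension of test functions plus $\nabla b_u=0$ on $\Omega(0)\setminus\overline{\Omega(u)}$). Your write-up merely makes explicit the standard arguments the paper leaves to the cited references.
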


Replacing $\vartheta\in H_D^1(\Omega(u))$ in \eqref{a7} by $\xi-b_u$, where $\xi$ is an arbitrary function in $H^1(\Omega(u))$ satisfying $\xi-b_u\in H^1_D(\Omega(u))$, one easily obtains the following consequence:

\begin{lemma}\label{lea1}
Let $u\in \mathcal{K}^1$. For all $\xi\in H^1(\Omega(u))$ such that $\xi-b_u\in H^1_D(\Omega(u))$ there holds
\begin{equation}
\int_{\Omega(u)} \left( \varepsilon^2 |\partial_x \psi_u|^2 + |\partial_z \psi_u|^2 \right)\, \mathrm{d}(x,z) \le \int_{\Omega(u)} \left( \varepsilon^2 |\partial_x\xi|^2 + |\partial_z\xi|^2 \right)\, \mathrm{d}(x,z)\ . \label{a9}
\end{equation}
\end{lemma}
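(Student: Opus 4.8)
The plan is to read the variational inequality \eqref{a7} as the statement that $\psi_u-b_u$ minimizes the quadratic functional $\vartheta\mapsto\int_{\Omega(u)}\bigl(\varepsilon^2|\partial_x\vartheta|^2+|\partial_z\vartheta|^2\bigr)\,\rd(x,z)-2\langle B_u,\vartheta\rangle$ over $H_D^1(\Omega(u))$, and to convert this into the claimed minimization of the Dirichlet energy over the affine class $\{\xi\in H^1(\Omega(u))\,;\,\xi-b_u\in H_D^1(\Omega(u))\}$ via the substitution $\vartheta=\xi-b_u$ indicated in the text. To keep the algebra transparent I would introduce the symmetric bilinear form $a(v,w):=\int_{\Omega(u)}\bigl(\varepsilon^2\partial_x v\,\partial_x w+\partial_z v\,\partial_z w\bigr)\,\rd(x,z)$ on $H^1(\Omega(u))$, so that \eqref{a9} reads $a(\psi_u,\psi_u)\le a(\xi,\xi)$.

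The first and only substantive step is to rewrite the pairing $\langle B_u,\cdot\rangle$ as a term of the form $-a(b_u,\cdot)$. Since $u\in\mathcal{K}^1$ satisfies $u\le 0$ on $I$, we have $\Omega(u)\subseteq\Omega(0)$, so every $\vartheta\in H_D^1(\Omega(u))$ extends by zero to an element of $H_D^1(\Omega(0))$ whose weak gradient vanishes a.e.\ on $\Omega(0)\setminus\overline{\Omega(u)}$. Inserting this extension into the definition \eqref{a8} of $B_u$ annihilates the contribution of $\Omega(0)\setminus\overline{\Omega(u)}$ and leaves, using $b_u(x,z)=(1+z)/(1+u(x))$ on $\Omega(u)$, the identity $\langle B_u,\vartheta\rangle=-a(b_u,\vartheta)$ for all $\vartheta\in H_D^1(\Omega(u))$. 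This is the one place where the hypothesis $u\in\mathcal{K}^1$ (and not merely $u\in S^1$) is genuinely used, guaranteeing both $\Omega(u)\subseteq\Omega(0)$ and $b_u\in H^1(\Omega(0))$, which is what makes all the integrals finite and the extension-by-zero legitimate.

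With this identity in hand the conclusion follows by completing the square. Taking $\vartheta=\psi_u-b_u$ on the left-hand side of \eqref{a7} gives $a(\psi_u-b_u,\psi_u-b_u)+2a(b_u,\psi_u-b_u)=a(\psi_u,\psi_u)-a(b_u,b_u)$, while choosing the admissible test function $\vartheta=\xi-b_u$, which lies in $H_D^1(\Omega(u))$ precisely by the hypothesis on $\xi$, turns the right-hand side of \eqref{a7} into $a(\xi,\xi)-a(b_u,b_u)$. The constant $a(b_u,b_u)$ is common to both sides and cancels, so \eqref{a7} collapses to $a(\psi_u,\psi_u)\le a(\xi,\xi)$, which is exactly \eqref{a9}. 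I expect no serious obstacle: the statement is essentially the standard equivalence between a convex variational inequality and its underlying energy minimization, and the only delicate point is the bookkeeping behind $\langle B_u,\vartheta\rangle=-a(b_u,\vartheta)$, namely tracking that test functions on $\Omega(u)$ extend trivially to $\Omega(0)$ and that the piecewise definition of $b_u$ contributes nothing outside $\Omega(u)$.
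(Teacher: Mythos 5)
Your proof is correct and follows essentially the same route as the paper, which simply substitutes $\vartheta=\xi-b_u$ into the variational inequality \eqref{a7} and leaves the algebra implicit. Your additional bookkeeping — the identity $\langle B_u,\vartheta\rangle=-a(b_u,\vartheta)$ via extension by zero and the cancellation of $a(b_u,b_u)$ after completing the square — is exactly the computation the paper's one-line proof relies on.
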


We collect additional properties of $\psi_u$ in the next result when $u$ is assumed to be more regular.

\begin{proposition}\label{lea5}
Let $\alpha\in [0,1/2)$. If $u\in S^{2-\alpha}$, then the weak solution $\psi_u$ to \eqref{psieq}-\eqref{psibc} belongs to $H^{2-\alpha}(\Omega(u))$. In addition, if $u\in\mathcal{K}^{2-\alpha}$, then
\begin{eqnarray}
1+z\ \le\ \psi_u(x,z) & \le & 1 \ , \quad (x,z)\in \Omega(u)\ , \label{a100} \\
\partial_x \psi_u(x,u(x)) & = & - \partial_z \psi_u(x,u(x))\ \partial_x u(x)\ , \quad x\in I\ , \label{a101} \\
\partial_z \psi_u(x,u(x)) & \ge & 0\ , \quad x\in I\ . \label{a102}
\end{eqnarray}
\end{proposition}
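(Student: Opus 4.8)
The regularity assertion is the crux, and I would establish it by flattening the moving boundary. The plan is to map $\Omega(u)$ onto the fixed rectangle $\mathcal{R}:=I\times(0,1)$ through $(x,z)\mapsto(x,\eta)$ with $\eta:=(1+z)/(1+u(x))$, which sends the bottom $\{z=-1\}$ to $\{\eta=0\}$, the top $\{z=u(x)\}$ to $\{\eta=1\}$, and the lateral sides to themselves. Since $2-\alpha>3/2$, the embedding $H^{2-\alpha}(I)\hookrightarrow C^1(\bar I)$ gives $u\in C^1(\bar I)$, so with $1+u$ bounded away from $0$ this is a diffeomorphism of class $H^{2-\alpha}$. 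Setting $\Phi(x,\eta):=\psi_u(x,(1+u(x))\eta-1)$, the divergence-form equation $\mathrm{div}(\mathrm{diag}(\varepsilon^2,1)\nabla\psi_u)=0$ from \eqref{psieq} transforms into another uniformly elliptic divergence-form equation $-\mathrm{div}_{(x,\eta)}(\mathcal{A}_u\nabla_{(x,\eta)}\Phi)=0$ on $\mathcal{R}$; crucially, keeping divergence form means the coefficient matrix $\mathcal{A}_u$ is built only from $u$ and $\partial_x u$ (smoothly in $\eta$), so no $\partial_x^2 u$ appears as a genuine coefficient and $\mathcal{A}_u$ depends on $x$ through functions in $H^{1-\alpha}(I)$. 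The boundary condition \eqref{psibc} becomes $\Phi(x,0)=0$, $\Phi(x,1)=1$, $\Phi(\pm1,\eta)=\eta$, so that $\Phi-\zeta$ with $\zeta(x,\eta):=\eta$ vanishes on $\partial\mathcal{R}$ and solves $-\mathrm{div}(\mathcal{A}_u\nabla(\Phi-\zeta))=\mathrm{div}(\mathcal{A}_u\nabla\zeta)=:F$ with $F\in H^{-\alpha}(\mathcal{R})$.

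The heart of the matter is then an $H^{2-\alpha}$-estimate for this problem on $\mathcal{R}$. Because $\mathcal{R}$ is convex with right-angle corners and the homogeneous Dirichlet data is compatible at the four corners, no corner singularity below $H^2$ can arise; the limited smoothness $H^{1-\alpha}$ of the coefficients is then accommodated using the tensor structure of $\mathcal{A}_u$ and the fact that $H^{1-\alpha}(I)$ is a multiplier algebra (here $1-\alpha>1/2$ is essential), yielding $\Phi\in H^{2-\alpha}(\mathcal{R})$. Transforming back along the $H^{2-\alpha}$-diffeomorphism then gives $\psi_u\in H^{2-\alpha}(\Omega(u))$. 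I expect the verification of this regularity estimate for non-smooth coefficients on a domain with corners to be the principal obstacle; the remaining assertions are comparatively soft.

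For the bounds \eqref{a100} assume now $u\in\mathcal{K}^{2-\alpha}$, so $-1<u\le0$. Since the affine function $1+z$ and the constant $1$ both solve $\varepsilon^2\partial_x^2 v+\partial_z^2 v=0$ classically, linearity of the weak formulation of Lemma~\ref{lea0} shows that $w_-:=\psi_u-(1+z)$ and $w_+:=\psi_u-1$ are weak solutions of the same equation. Using \eqref{psibc} one computes on $\partial\Omega(u)$ that $w_-=-u\ge0$ on the top and $w_-=0$ on the bottom and the sides, while $w_+=0$ on the top and $w_+\le0$ on the bottom and the sides; hence $(w_-)_-$ and $(w_+)_+$ both lie in $H^1_D(\Omega(u))$. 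Testing the weak formulation with these functions forces the Dirichlet integral of $w_-$ over $\{w_-<0\}$, resp. of $w_+$ over $\{w_+>0\}$, to vanish, whence $w_-\ge0$ and $w_+\le0$, i.e. \eqref{a100}.

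Finally, $\psi_u\in H^{2-\alpha}(\Omega(u))$ with $2-\alpha>3/2$ makes the trace of $\nabla\psi_u$ on the top boundary well-defined, and $\psi_u(x,u(x))=b_u(x,u(x))=1$ for $x\in I$. Differentiating this identity along the curve $x\mapsto(x,u(x))$ — equivalently, equating the tangential derivative of $\psi_u$ to that of the constant datum $1$ — yields \eqref{a101}. For \eqref{a102} I would invoke \eqref{a100}: for fixed $x$ the map $z\mapsto\psi_u(x,z)$ satisfies $\psi_u(x,z)\le1=\psi_u(x,u(x))$ on $(-1,u(x)]$, hence attains its maximum at the right endpoint, forcing its left derivative there — which by the $H^{2-\alpha}$-regularity coincides for a.e. $x$ with the trace $\partial_z\psi_u(x,u(x))$ — to be non-negative.
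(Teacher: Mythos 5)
Your handling of \eqref{a100}--\eqref{a102} is correct and essentially identical to the paper's: the comparison argument with the subsolution $1+z$ and supersolution $1$ (which you carry out explicitly by testing with negative/positive parts), differentiation of the boundary identity $\psi_u(x,u(x))=1$, and the endpoint-maximum argument for $\partial_z\psi_u(x,u(x))\ge 0$. Your setup for the regularity claim is also the paper's setup: the flattening $T_u(x,z)=\left(x,(1+z)/(1+u(x))\right)$ onto the rectangle $\Omega=I\times(0,1)$, and your observation that keeping divergence form leaves coefficients built only from $u$ and $\partial_x u$ (hence lying in the algebra $H^{1-\alpha}(I)$, tensored with smooth functions of $\eta$), with the rough term $\partial_x^2 u$ relegated to the right-hand side, is correct — it is in substance the weak formulation \eqref{g7b} that the paper works with.

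The genuine gap is exactly at the point you yourself flag as ``the principal obstacle'': the assertion that $\Phi\in H^{2-\alpha}$ on the rectangle because the domain is convex, the data compatible, and $H^{1-\alpha}(I)$ a multiplier algebra is not a proof, and no citable standard theorem covers it. Grisvard's convex-domain $H^2$ theory requires Lipschitz (or better) coefficients and $L_2$ data; here the coefficients are only $H^{1-\alpha}$ in $x$ (H\"older, not Lipschitz) and the datum only $H^{-\alpha}$. The naive way to use the multiplier property — freeze coefficients and move the rough terms to the right-hand side — is circular: the error terms involve products such as $\partial_x^2 u\cdot\partial_\eta\Phi$, which can be placed in $H^{-\alpha}$ only once one already knows more regularity of $\partial_\eta\Phi$ than the energy estimate gives. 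This is precisely what Section~\ref{sec: 4} of the paper supplies: it first proves the anisotropic bound $\partial_\eta\Phi_u\in H^1(\Omega)$ (the space $X(\Omega)$) by testing the equation with $\partial_\eta^2\Phi_u$, using the rectangle identity $\int_\Omega\partial_x^2\Phi\,\partial_\eta^2\Phi\,\rd(x,\eta)=\int_\Omega|\partial_x\partial_\eta\Phi|^2\,\rd(x,\eta)$ together with delicate trace and interpolation estimates (Lemmas~\ref{gle3a} and~\ref{gle3b}); moreover, for $\alpha\in(0,1/2)$ it crucially exploits that the right-hand side has the special structure $\eta\,\partial_x f_1(x)$ with $f_1\in H^{1-\alpha}(I)$ — a derivative in the $x$-direction, not merely an element of $H^{-\alpha}(\Omega)$ — in order to integrate by parts in $x$. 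Even then, Proposition~\ref{gth1} yields only $\Phi_u\in X(\Omega)\cap H^{2-\nu}(\Omega)$ for $\nu>\alpha$, and the conclusion $\psi_u\in H^{2-\alpha}(\Omega(u))$ is recovered in Corollary~\ref{gth0} by writing $\partial_x^2\psi_u=q_u+r_u+s_u$ and estimating each piece separately; your closing step ``transforming back along the $H^{2-\alpha}$-diffeomorphism'' needs the same term-by-term argument, since composing with a change of variables of merely $H^{2-\alpha}$ regularity does not obviously preserve $H^{2-\alpha}$. In short: the soft parts are fine, but the core regularity statement — the actual content of the proposition — is asserted rather than proved.
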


\begin{proof}
That $\psi_u\in H^{2-\alpha}(\Omega(u))$ for $u\in S^{2-\alpha}$ follows from Corollary~\ref{gth0} proved in Section~\ref{sec: 4}. Next, if $u\in\mathcal{K}^{2-\alpha}$, then owing to the non-positivity of $u$, the functions $(x,z)\mapsto 1+z$ and $(x,z)\mapsto 1$ are a subsolution and a supersolution to \eqref{psieq}-\eqref{psibc}, respectively, and \eqref{a100} follows from the comparison principle. To obtain \eqref{a101}, we simply differentiate the boundary condition $\psi_u(x,u(x))=1$, $x\in I$, with respect to $x$. Finally, \eqref{a102} is a straightforward consequence of the boundary condition $\psi_u(x,u(x))=1$, $x\in I$, and~\eqref{a100}. 
\end{proof}

Thanks to the continuity of the normal trace of the gradient from $H^{2-\alpha}(\Omega(u))$ to $H^{(1-2\alpha)/2}(I)$ for $\alpha\in [0,1/2)$ \cite[Theorem~1.5.2.1]{Gr85}, the regularity of the solution $\psi_u\in H^{2-\alpha}(\Omega(u))$ to \eqref{psieq}-\eqref{psibc} for $u\in S^{2-\alpha}$ provided by Proposition~\ref{lea5} gives a meaning to the right-hand side of \eqref{ueq}. We introduce the function $g$ by
\begin{equation}\label{gg}
g(u)(x) := \varepsilon^2 |\partial_x \psi_u(x,u(x))|^2 + |\partial_z \psi_u(x,u(x))|^2\ , \quad x\in I\ ,\quad u\in S^{2-\alpha}\ ,
\end{equation}
 and observe:

\begin{proposition}\label{lea5g}
If $\alpha\in [0,1/2)$, then $g\in C(S^{2-\alpha}, H^{\sigma}(I))$ for all $\sigma\in [0,1/2)$.
\end{proposition}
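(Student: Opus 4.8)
The plan is to reduce $g(u)$ to a single boundary trace on a fixed domain and then to track its dependence on $u$. First I would use the orthogonality identity \eqref{a101}, which expresses $\partial_x\psi_u(x,u(x))=-\partial_z\psi_u(x,u(x))\,\partial_x u(x)$, to rewrite \eqref{gg} as
$$ g(u)(x) = \left(1+\varepsilon^2|\partial_x u(x)|^2\right)\,|\partial_z\psi_u(x,u(x))|^2\,, \qquad x\in I\,. $$
This isolates the only genuinely nonlocal object, the normal trace $\partial_z\psi_u(\cdot,u(\cdot))$, the remaining factor $1+\varepsilon^2|\partial_x u|^2$ depending continuously on $u\in S^{2-\alpha}$ through the embedding $H^{2-\alpha}(I)\hookrightarrow C^1(\bar I)$, valid since $2-\alpha>3/2$. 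To handle the trace on the $u$-dependent domain $\Omega(u)$ I would flatten it: with $\Phi_u(x,y):=(x,(1+u(x))y-1)$ mapping the fixed rectangle $\mathcal{R}:=I\times(0,1)$ onto $\Omega(u)$ and $\phi_u:=\psi_u\circ\Phi_u$, the chain rule gives $\partial_z\psi_u(x,u(x))=(1+u(x))^{-1}\partial_y\phi_u(x,1)$, so that
$$ g(u)(x) = \frac{1+\varepsilon^2|\partial_x u(x)|^2}{(1+u(x))^2}\,|\partial_y\phi_u(x,1)|^2\,, \qquad x\in I\,. $$

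The core step is the continuity of $u\mapsto\phi_u$ from $S^{2-\alpha}$ into $H^{2-\alpha}(\mathcal{R})$. Under $\Phi_u$ the problem \eqref{psieq}-\eqref{psibc} transforms into an elliptic equation on $\mathcal{R}$ with homogeneous-type boundary data and with coefficients that are rational expressions in $u$ and $\partial_x u$; these are well defined and depend continuously on $u$ because $H^{2-\alpha}(I)\hookrightarrow C^1(\bar I)$ and because $1+u$ stays bounded away from $0$ locally in $S^{2-\alpha}$ (recall $u(\pm1)=0$). The $H^{2-\alpha}$-regularity of $\psi_u$ furnished by Proposition~\ref{lea5} via Corollary~\ref{gth0}, together with the continuous dependence of these transported coefficients on $u$, should yield the desired continuity of $u\mapsto\phi_u$ in $H^{2-\alpha}(\mathcal{R})$; I would obtain it from the estimates of Section~\ref{sec: 4} combined with uniqueness and a standard subsequence argument. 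The continuity of the normal trace of the gradient from $H^{2-\alpha}(\mathcal{R})$ into $H^{(1-2\alpha)/2}(I)$ \cite[Theorem~1.5.2.1]{Gr85} then shows that $u\mapsto\partial_y\phi_u(\cdot,1)$ is continuous from $S^{2-\alpha}$ into $H^{(1-2\alpha)/2}(I)$.

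Finally I would conclude by pointwise multiplication estimates on the interval. Writing $f_u:=\partial_y\phi_u(\cdot,1)$, I would control $f_u^2$ and then its product with the multiplier $(1+\varepsilon^2|\partial_x u|^2)(1+u)^{-2}$ in $H^{\sigma}(I)$ for $\sigma\in[0,1/2)$, using the Sobolev embeddings of $H^{(1-2\alpha)/2}(I)$ into suitable $L_p(I)$, the fact that $H^{s}(I)\cap L_\infty(I)$ is a multiplication algebra for $0<s<1$, and the sharp fractional product rule; the continuity of $u\mapsto g(u)$ in $H^\sigma(I)$ then follows from the continuity of each factor and the bilinearity of the product.

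The two steps I expect to be the main obstacles are precisely (i) the continuous dependence of the flattened potential $\phi_u$ on $u$ in the $H^{2-\alpha}$ topology, which is a variable-coefficient, domain-perturbation estimate and is the reason for relying on the machinery of Section~\ref{sec: 4}; and (ii) the multiplication estimates, since the trace $f_u$ sits at or below the threshold $H^{1/2}(I)$, so the squaring and the range $\sigma<1/2$ cannot be obtained from a naive algebra property but must be extracted through Sobolev embeddings and the sharp product rule, the admissible regularity of $g(u)$ being dictated by that of the trace $f_u$.
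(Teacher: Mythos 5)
Your reduction of $g(u)$ to the flattened trace formula $g(u)=\frac{1+\varepsilon^2|\partial_x u|^2}{(1+u)^2}\,|\partial_y\phi_u(\cdot,1)|^2$ is exactly the paper's starting point (proof of Corollary~\ref{gth0}), and your final multiplication strategy is also the paper's. But there is a genuine gap in the middle step, and it is quantitative rather than technical: you propose to work with the isotropic regularity $\phi_u\in H^{2-\alpha}(\mathcal{R})$, so that $\partial_y\phi_u\in H^{1-\alpha}(\mathcal{R})$ and the trace $f_u=\partial_y\phi_u(\cdot,1)$ lies only in $H^{(1-2\alpha)/2}(I)$. This is too weak to conclude. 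By the sharp product rule in one dimension, $H^{s}(I)\cdot H^{s}(I)\hookrightarrow H^{\sigma}(I)$ requires $\sigma<2s-\frac12$ when $s<\frac12$; with $s=\frac{1-2\alpha}{2}$ this gives $f_u^2\in H^\sigma(I)$ only for $\sigma<\frac12-2\alpha$. So the admissible range of $\sigma$ shrinks with $\alpha$ instead of being all of $[0,1/2)$, and for $\alpha\ge \frac14$ you get nothing at all: for $\alpha>\frac14$ the embedding $H^{(1-2\alpha)/2}(I)\hookrightarrow L_{1/\alpha}(I)$ shows that $f_u^2$ need not even belong to $L_2(I)$. Your own closing remark (``the admissible regularity of $g(u)$ being dictated by that of the trace $f_u$'') identifies precisely the point at which the argument falls short of the stated claim.

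What is missing is the anisotropic estimate that is the actual content of Section~\ref{sec: 4}: the paper proves (Lemmas~\ref{gle3a} and~\ref{gle3b}, assembled in Proposition~\ref{gth1}) that $\Phi_u$ lies in $X(\Omega)=\{w\in H^1(\Omega)\,;\,\partial_\eta w\in H^1(\Omega)\}$, i.e.\ the vertical derivative has \emph{full} $H^1(\Omega)$ regularity uniformly on $\overline{S}_2^{2-\alpha}(\kappa)$, even though $\Phi_u$ itself is only in $H^{2-\nu}(\Omega)$ with $\nu>\alpha$. This does not follow from elliptic regularity alone; it is obtained by testing the transformed equation with $\partial_\eta^2\Phi$ and exploiting the special structure of the right-hand side $f_u$ (a function of $\eta$ times an $x$-derivative). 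It is this $X(\Omega)$ bound which, via weak convergence, the compact embedding $H^1(\Omega)\hookrightarrow H^{1-s}(\Omega)$, and the trace theorem, yields $f_{u_n}\to f_u$ in $H^{1/2-s}(I)$ for \emph{every} $s>0$, independently of $\alpha$; then the multiplication theorem of \cite{Am91}, $H^{1-\alpha}(I)\cdot H^{1-\alpha}(I)\cdot H^{1/2-s}(I)\cdot H^{1/2-s}(I)\hookrightarrow H^{\sigma}(I)$ for $\sigma<\frac12-2s$, closes the argument for all $\sigma\in[0,1/2)$ since $s$ is arbitrary. Unless you replace your isotropic trace step by this anisotropic one, the proof cannot reach the stated range of $\sigma$.
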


\begin{proof}
This is proved in Corollary~\ref{gth0}.
\end{proof}

\subsection{Electrostatic energy} \label{sec:ee}

We now study the properties of the electrostatic energy 
\begin{equation}
\mathcal{E}_e(u) = \int_{\Omega(u)} \left( \varepsilon^2 |\partial_x \psi_u|^2 + |\partial_z \psi_u|^2 \right)\, \mathrm{d}(x,z)\ ,\quad u\in S^1\ ,\label{a10}
\end{equation}
where $\psi_u\in H^1(\Omega(u))$ is provided by Lemma~\ref{lea0}. Alternatively, we may write for $u\in\mathcal{K}^1$
\begin{equation}
\begin{split}
\mathcal{E}_e(u) = & \int_{\Omega(u)} \left( \varepsilon^2 |\partial_x(\psi_u-b_u)|^2 + |\partial_z(\psi_u-b_u)|^2 \right)\, \mathrm{d}(x,z) \\ 
& - 2 \langle B_u , \psi_u - b_u \rangle + \int_{-1}^1 \left( 1 + \frac{\varepsilon^2}{3} |\partial_x u|^2 \right) \frac{\mathrm{d}x}{1+u}\ . 
\end{split} \label{a10a}
\end{equation}
We first establish a monotonicity property of $\mathcal{E}_e$ similar to \cite[Remarque~4.7.14]{HP05}.

\begin{proposition}\label{pra2}
Consider two functions $u_1$ and $u_2$ in $\mathcal{K}^1$ such that $u_1\le u_2$. Then $\mathcal{E}_e(u_2)\le \mathcal{E}_e(u_1)$. 
\end{proposition}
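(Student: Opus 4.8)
The plan is to exploit the variational (minimization) characterization of the electrostatic energy furnished by Lemma~\ref{lea1}, together with the elementary geometric fact that $u_1\le u_2$ forces the inclusion $\Omega(u_1)\subseteq\Omega(u_2)$. Indeed, Lemma~\ref{lea1} says that $\mathcal{E}_e(u_2)$ is bounded above by the Dirichlet-type functional evaluated at \emph{any} competitor $\xi\in H^1(\Omega(u_2))$ with $\xi-b_{u_2}\in H^1_D(\Omega(u_2))$. It therefore suffices to exhibit one admissible competitor on the larger domain $\Omega(u_2)$ whose energy equals exactly $\mathcal{E}_e(u_1)$.

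To this end I would extend $\psi_{u_1}$ to $\Omega(u_2)$ by filling the extra slab $\{u_1(x)<z<u_2(x)\}$ with the constant $1$, namely
\[
\tilde\psi(x,z) :=
\begin{cases}
\psi_{u_1}(x,z), & (x,z)\in \Omega(u_1), \\
1, & (x,z)\in \Omega(u_2)\setminus\Omega(u_1).
\end{cases}
\]
Since $\mathcal{K}^1\subset C(\overline{I})$ by Sobolev embedding, the interface $z=u_1(x)$ is the graph of a continuous function, and the trace of $\psi_{u_1}$ on this interface equals $b_{u_1}(x,u_1(x))=1$, which matches the constant piece from above; gluing two $H^1$ functions with coinciding traces then yields $\tilde\psi\in H^1(\Omega(u_2))$.

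Next I would check admissibility, i.e. that $\tilde\psi-b_{u_2}\in H^1_D(\Omega(u_2))$, by inspecting the boundary values of $\tilde\psi$ on $\partial\Omega(u_2)$: on $z=-1$ one has $\tilde\psi=\psi_{u_1}=0=b_{u_2}$; on $z=u_2(x)$ one has $\tilde\psi=1=b_{u_2}$; and on the lateral sides $x=\pm1$, where $u_1(\pm1)=u_2(\pm1)=0$ leaves no extra slab, $\tilde\psi=\psi_{u_1}=1+z=b_{u_2}$. With admissibility established, Lemma~\ref{lea1} gives
\[
\mathcal{E}_e(u_2) \le \int_{\Omega(u_2)} \left( \varepsilon^2 |\partial_x \tilde\psi|^2 + |\partial_z \tilde\psi|^2 \right)\, \mathrm{d}(x,z)\ .
\]
Because $\tilde\psi$ is constant on $\Omega(u_2)\setminus\Omega(u_1)$, its gradient vanishes there, so the right-hand side reduces to $\int_{\Omega(u_1)}\left(\varepsilon^2|\partial_x\psi_{u_1}|^2+|\partial_z\psi_{u_1}|^2\right)\,\mathrm{d}(x,z)=\mathcal{E}_e(u_1)$, which is the desired inequality.

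The step I expect to demand the most care is the gluing argument showing $\tilde\psi\in H^1(\Omega(u_2))$ with the correct trace $b_{u_2}$ on $\partial\Omega(u_2)$: since $\psi_{u_1}$ is a priori only $H^1$ and the interface $z=u_1(x)$ is merely the graph of an $H^1$ (hence continuous, but not necessarily Lipschitz) function, one must justify rigorously that no spurious jump is created across the interface and that the lateral boundary condition is correctly inherited. Everything else is a direct consequence of the minimizing property in Lemma~\ref{lea1} and the vanishing of the gradient on the added slab.
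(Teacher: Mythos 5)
Your proof is correct and takes essentially the same approach as the paper: the paper extends an arbitrary admissible competitor $\xi$ on $\Omega(u_1)$ by the constant $1$ to $\Omega(u_2)\setminus\overline{\Omega(u_1)}$ and then specializes to $\xi=\psi_{u_1}$, which is exactly your construction applied directly to $\psi_{u_1}$. The gluing concern you flag is resolved there in the same spirit, using that $\psi_{u_1}-b_{u_1}\in H^1_D(\Omega(u_1))$ extends by zero to an $H^1$ function and that $b_{u_1}$ is already defined (and $H^1$) on all of $\Omega(0)$ with value $1$ outside $\overline{\Omega(u_1)}$, so that $\nabla\tilde\psi=\mathbf{1}_{\Omega(u_1)}\nabla\psi_{u_1}$.
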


\begin{proof}
Consider $\xi\in H^1(\Omega(u_1))$ such that $\xi-b_{u_1}\in H_D^1(\Omega(u_1))$ and define
$$
\tilde{\xi}(x,z) := \left\{
\begin{array}{lcl}
\xi(x,z) & \text{for} & (x,z)\in\Omega(u_1)\ , \\
 & & \\
 1 & \text{for} & (x,z)\in\Omega(u_2)\setminus\overline{\Omega(u_1)}\ .
\end{array}
\right.
$$
Note that this definition is meaningful since $\Omega(u_1)\subset\Omega(u_2)$. Since $b_{u_1}(x,u_1(x)) = b_{u_2}(x,u_2(x)) = 1$ for $x\in I$, the previous construction guarantees that $\tilde{\xi}\in H^1(\Omega(u_2))$ with 
\begin{equation}
\tilde{\xi} - b_{u_2}\in H_D^1(\Omega(u_2)) \;\;\text{ and }\;\; \nabla\tilde{\xi} = \mathbf{1}_{\Omega(u_1)}\, \nabla\xi \ . \label{a11}
\end{equation}
We now infer from Lemma~\ref{lea1} and \eqref{a11} that
\begin{align*}
\mathcal{E}_e(u_2) \le & \int_{\Omega(u_2)} \left( \varepsilon^2 |\partial_x \tilde{\xi} |^2 + |\partial_z \tilde{\xi}|^2 \right)\ \mathrm{d}(x,z) \\ 
= & \int_{\Omega(u_1)} \left( \varepsilon^2 |\partial_x \xi |^2 + |\partial_z \xi|^2 \right)\ \mathrm{d}(x,z)\ .
\end{align*}
The above inequality being valid for all $\xi\in H^1(\Omega(u_1))$ satisfying $\xi-b_{u_1} \in H_D^1(\Omega(u_1))$, in particular for $\xi=\psi_{u_1}$, we conclude that $\mathcal{E}_e(u_2)\le \mathcal{E}_e(u_1)$. 
\end{proof}

We next turn to continuity and Fr\'echet differentiability of the functional $\mathcal{E}_e$.

\begin{proposition}\label{pra3}
If $\alpha\in [0,1/2)$, then $\mathcal{E}_e\in C(\mathcal{K}^1)\cap C^1(S^{2-\alpha})$ with $\partial_u\mathcal{E}_e(u) = -g(u)$ for $u\in S^{2-\alpha}$.
\end{proposition}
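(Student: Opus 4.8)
The plan is to remove the $u$-dependence of the integration domain by flattening $\Omega(u)$ onto the fixed rectangle $Q:=I\times(0,1)$ through the diffeomorphism $(x,z)\mapsto(x,\eta)$ with $\eta=(1+z)/(1+u(x))$. Setting $\phi_u(x,\eta):=\psi_u(x,-1+(1+u(x))\eta)$, one has $\phi_u(\cdot,0)=0$, $\phi_u(\cdot,1)=1$, and a direct computation turns \eqref{a10} into
\begin{equation*}
\mathcal{E}_e(u)=\int_Q \mathcal{A}(u)\,\nabla\phi_u\cdot\nabla\phi_u\,\mathrm{d}(x,\eta),\qquad \nabla=(\partial_x,\partial_\eta),
\end{equation*}
where the symmetric matrix $\mathcal{A}(u)=\mathcal{A}(u,\partial_x u;\eta)$ is built from $1+u$ and $\eta\,\partial_x u$, satisfies $\det\mathcal{A}(u)=\varepsilon^2$, and is uniformly elliptic with a constant governed by $\min_{\overline I}(1+u)>0$. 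Since the boundary data $\phi(\cdot,0)=0$, $\phi(\cdot,1)=1$ are now independent of $u$, Lemma~\ref{lea1} (more generally, the Dirichlet principle for $u\in S^1$) shows that $\phi_u$ minimizes $\phi\mapsto\int_Q\mathcal{A}(u)\nabla\phi\cdot\nabla\phi$ over the fixed affine set $\mathbb{A}$ of such functions, equivalently $\int_Q\mathcal{A}(u)\nabla\phi_u\cdot\nabla\theta=0$ for all variations $\theta$ vanishing on $\{\eta=0\}\cup\{\eta=1\}$. This minimization characterization is the backbone of both assertions.

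For continuity on $\mathcal{K}^1$, consider $u_n\to u$ in $H_D^1(I)$ with $u_n,u\in\mathcal{K}^1$. The embedding $H^1(I)\hookrightarrow C(\overline I)$ yields uniform convergence $u_n\to u$, hence a uniform lower bound $1+u_n\ge\delta>0$ and uniform ellipticity of $\mathcal{A}(u_n)$; together with $\partial_x u_n\to\partial_x u$ in $L_2(I)$ this gives $\mathcal{A}(u_n)\to\mathcal{A}(u)$ both a.e. on $Q$ and in $L_1(Q)$ (note $|\partial_x u_n|^2\to|\partial_x u|^2$ in $L_1(I)$). I would prove lower semicontinuity by extracting a weak $H^1$-limit of the uniformly bounded minimizers $\phi_{u_n}$, discarding a set of small measure via Egorov's theorem so that $\mathcal{A}(u_n)\to\mathcal{A}(u)$ uniformly on the good set, and invoking weak lower semicontinuity of the frozen convex functional there; letting the discarded set shrink returns $\liminf_n\mathcal{E}_e(u_n)\ge\mathcal{E}_e(u)$. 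For the reverse inequality I would transport a competitor: feeding a bounded-gradient approximation $\phi_u^{\kappa}$ of $\phi_u$ into the energy for $u_n$ gives $\mathcal{E}_e(u_n)\le\int_Q\mathcal{A}(u_n)\nabla\phi_u^\kappa\cdot\nabla\phi_u^\kappa$, and the boundedness of $\nabla\phi_u^\kappa$ makes the $L_1(Q)$-convergence of $\mathcal{A}(u_n)$ sufficient to pass to the limit, after which $\kappa\to0$ recovers $\mathcal{E}_e(u)$. The genuinely delicate point is exactly this upper bound: because $\mathcal{A}(u)$ depends quadratically on $\partial_x u$ while $\phi_u$ is a priori only in $H^1(Q)$, the naive transport of $\phi_u$ itself need not converge under mere $L_2$-convergence of $\partial_x u_n$, and it is the minimization structure, through the bounded-gradient approximation, that repairs this.

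For the $C^1$ statement on $S^{2-\alpha}$ the extra regularity is decisive: when $u\in H^{2-\alpha}(I)$ with $\alpha<1/2$ one has $\partial_x u\in H^{1-\alpha}(I)$, a pointwise multiplier algebra embedded in $C(\overline I)$, so $u\mapsto\mathcal{A}(u)$ is smooth into a space of bounded, uniformly elliptic coefficients and $\psi_u\in H^{2-\alpha}(\Omega(u))$ by Proposition~\ref{lea5}. I would then show that $u\mapsto\phi_u$ is continuously differentiable from $S^{2-\alpha}$ into $H^1(Q)$ by applying the Implicit Function Theorem to $\int_Q\mathcal{A}(u)\nabla\phi_u\cdot\nabla\theta=0$, the linearization being an isomorphism by uniform ellipticity. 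Differentiating $\mathcal{E}_e(u)=\int_Q\mathcal{A}(u)\nabla\phi_u\cdot\nabla\phi_u$ and using that $\phi_u$ minimizes over the fixed affine set $\mathbb{A}$, so that the first-order optimality condition annihilates the contribution of $\mathrm{d}\phi_u$ (which is an admissible variation), leaves only the explicit dependence,
\begin{equation*}
\partial_u\mathcal{E}_e(u)[v]=\int_Q \bigl(D_u\mathcal{A}(u)[v]\bigr)\,\nabla\phi_u\cdot\nabla\phi_u\,\mathrm{d}(x,\eta).
\end{equation*}
Transporting this identity back to $\Omega(u)$ and integrating by parts, while exploiting the harmonicity of $\psi_u$, the boundary value $\psi_u(\cdot,u)=1$, and the tangency relation \eqref{a101} (with \eqref{a102} fixing the sign), collapses the volume term into the boundary integral $-\int_I g(u)\,v\,\mathrm{d}x$, i.e. $\partial_u\mathcal{E}_e(u)=-g(u)$. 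Continuity of this derivative, hence membership in $C^1(S^{2-\alpha})$, then follows from Proposition~\ref{lea5g} (Corollary~\ref{gth0}).

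I expect the principal obstacles to be, first, the low-regularity upper bound in the continuity proof, where the quadratic dependence of $\mathcal{A}$ on $\partial_x u$ must be tamed through the minimization structure rather than by term-by-term passage to the limit, and second, the back-transformation in the $C^1$ part, where the identification of the shape derivative as precisely $-g(u)$ hinges on the boundary tangency identity \eqref{a101}; the differentiability of $u\mapsto\phi_u$ itself rests on the mapping and multiplication properties deferred to Section~\ref{sec: 4}.
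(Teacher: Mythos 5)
Your proof of the $C^1$ statement on $S^{2-\alpha}$ is essentially sound and close to the paper's own argument: the paper likewise obtains the G\^ateaux derivative $-g(u)$ by a shape-optimization/transport computation (deferred to \cite[Proposition~2.2]{LWxx}) and then upgrades to Fr\'echet differentiability via the continuity $g\in C(S^{2-\alpha},L_2(I))$ from Proposition~\ref{lea5g} and \cite[Proposition 4.8]{Ze86}. For $u\in S^{2-\alpha}$ your transformed matrix $\mathcal{A}(u)$ is indeed uniformly elliptic, because $\partial_x u\in H^{1-\alpha}(I)\hookrightarrow L_\infty(I)$, so the Implicit Function Theorem route, followed by the integration by parts using \eqref{a101}, is viable there.

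The continuity statement on $\mathcal{K}^1$ is where your argument has a genuine gap. The claim that $\mathcal{A}(u)$ is uniformly elliptic with a constant governed by $\min_{\overline{I}}(1+u)$ is false: writing the transformed integrand as $(1+u)\bigl[\varepsilon^2(\partial_x\phi-\eta U\partial_\eta\phi)^2+(\partial_\eta\phi)^2/(1+u)^2\bigr]$ with $U=\partial_x u/(1+u)$, the smallest eigenvalue of $\mathcal{A}(u)$ at $(x,\eta)$ is of order $(1+\eta^2U(x)^2)^{-1}$ (the determinant is $\varepsilon^2$ but the trace is unbounded), so ellipticity degenerates wherever $|\partial_x u|$ is large --- and for $u\in\mathcal{K}^1$ one only has $\partial_x u\in L_2(I)$. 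Two consequences undermine your scheme. First, the energy bound controls $\partial_\eta\phi_{u_n}$ and the combination $\partial_x\phi_{u_n}-\eta U_n\partial_\eta\phi_{u_n}$ in $L_2(Q)$, but not $\partial_x\phi_{u_n}$ itself; indeed $\phi_u=\psi_u\circ T_u^{-1}$ need not belong to $H^1(Q)$ at all when $u$ is merely $H^1$ (the map $T_u$ is not bi-Lipschitz), so there are no ``uniformly bounded minimizers'' in $H^1(Q)$ from which to extract a weak limit, your affine set $\mathbb{A}$ is not contained in any fixed $u$-independent space, and the Egorov/weak-lower-semicontinuity step has nothing to act on. Second, your limsup inequality needs bounded-gradient competitors to be energy-dense for a degenerate weighted functional, which is exactly the situation where Lavrentiev-type gaps can occur and which you do not justify (it can be rescued by transporting Lipschitz approximations of $\psi_u$ taken on $\Omega(u)$, but their gradients in $Q$ are then unbounded, and the limit passage needs the algebraic structure of the form, not just $L_1(Q)$ convergence of $\mathcal{A}(u_n)$). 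The paper sidesteps all of this: it extends $\psi_{u_n}-b_{u_n}$ by zero to the fixed rectangle $\Omega(0)$, observes that $B_{u_n}\to B_u$ in $H^{-1}(\Omega(0))$ and $d_H(\Omega(u_n),\Omega(u))\to 0$, and invokes \v{S}ver\'ak's two-dimensional domain-stability theorem \cite[Theorem~4.1]{Sv93} together with \cite[Corollaire~3.2.6]{HP05}, the fact that $\overline{\Omega(0)}\setminus\Omega(u_n)$ has a single connected component being precisely the hypothesis that compensates for the lack of uniform boundary regularity. Some input of this kind (or a corrected compactness argument phrased in the intrinsic variables $\partial_x\phi_n-\eta U_n\partial_\eta\phi_n$ and $\partial_\eta\phi_n$) is indispensable; as written, your continuity proof does not go through.
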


\begin{proof}
\textbf{Step~1: Continuity.} Let $(u_n)_{n\ge 1}$ be a sequence in $\mathcal{K}^1$ and $u\in\mathcal{K}^1$ such that $u_n\longrightarrow u$ in $H^1(I)$. We first observe that, for all $n\ge 1$, $\psi_{u_n} - b_{u_n}\in H_D^1(\Omega(u_n))$ is a weak solution to
\begin{equation}
\varepsilon^2 \partial_x^2 \left( \psi_{u_n} - b_{u_n} \right) + \partial_z^2 \left( \psi_{u_n} - b_{u_n} \right) = - B_{u_n} \ , \quad (x,z)\in \Omega(u_n)\ , \label{a12} 
\end{equation}
while the convergence of $(u_n)_{n\ge 1}$ toward $u$ in $H^1(I)$ entails that
\begin{equation}
\lim_{n\to\infty} \| B_{u_n} - B_u \|_{H^{-1}(\Omega(0))} = 0\ , \label{a13}
\end{equation}
where $\Omega(0) = I \times (-1,0)$.
Next, denoting the Hausdorff distance between open subsets of $\Omega(0)$ by $d_H$, see \cite[Section~2.2.3]{HP05} for instance, we realize that
$$
d_H(\Omega(u_n),\Omega(u)) \le \|u_n - u \|_{L_\infty(I)}\ ,
$$
and deduce from the continuous embedding of $H^1(I)$ in $L_\infty(I)$ that
\begin{equation}
\lim_{n\to\infty} d_H(\Omega(u_n),\Omega(u))  = 0\ . \label{a14}
\end{equation}
Since $\overline{\Omega(0)}\setminus\Omega(u_n)$ has a single connected component for all $n\ge 1$, it follows from \eqref{a12}, \eqref{a13}, \eqref{a14}, \cite[Theorem~4.1]{Sv93}, and \cite[Corollaire~3.2.6]{HP05} that 
\begin{equation}
\psi_{u_n} - b_{u_n} \longrightarrow \psi_u - b_u \;\;\text{ in }\;\; H_D^1(\Omega(0))\ .
\label{a140}
\end{equation}
Therefore, since 
$$
\lim_{n\to\infty} \int_{-1}^1 \left( 1 + \frac{\varepsilon^2}{3} |\partial_x u_n|^2 \right) \frac{\mathrm{d}x}{1+u_n} = \int_{-1}^1 \left( 1 + \frac{\varepsilon^2}{3} |\partial_x u|^2 \right) \frac{\mathrm{d}x}{1+u} 
$$
thanks to the continuous embedding of $H^1(I)$ in $L_\infty(I)$, we may pass to the limit as $n\to\infty$ in \eqref{a10a} for $u_n$ and use \eqref{a13} and \eqref{a140} to complete the proof. 

\smallskip

\textbf{Step~2: Differentiability.} Consider $u\in S^{2-\alpha}$ and $v\in H_D^{2-\alpha}(I)$. Owing to the continuous embedding of $H^{2-\alpha}(I)$ in $L_\infty(I)$, $u+s v$ still belongs to $S^{2-\alpha}$ for $s\in\mathbb{R}$ small enough and the map $s\mapsto \mathcal{E}_e(u+sv)$ is thus well-defined in a neighborhood of $s=0$. We then argue as in the proof of \cite[Proposition~2.2]{LWxx} with the help of a shape optimization approach (see \cite{HP05}, for instance) to show that this map is differentiable at $s=0$ with
\begin{equation*}
\frac{\rd}{\rd s}\mathcal{E}_e(u+sv)\Big|_{s=0} = - \int_{-1}^1 g(u) v\ \rd x\ . 
\end{equation*}
Consequently, $\mathcal{E}_e$ is G\^ateaux-differentiable with derivative $\partial_u\mathcal{E}_e(u)\in \mathcal{L}\left( H_D^{2-\alpha}(I) , \mathbb{R} \right)$. Moreover, since $g\in C(S^{2-\alpha}, L_2(I))$ by Proposition~\ref{lea5g}, the G\^ateaux-derivative $\partial_u\mathcal{E}_e$ is continuous as a mapping from $S^{2-\alpha}$ to $\mathcal{L}\left( H_D^{2-\alpha}(I) , \mathbb{R} \right)$. The claim follows from \cite[Proposition 4.8]{Ze86}.
\end{proof}

We next derive additional properties of $\mathcal{E}_e$ and, in particular, the following lower and upper bounds which have been established in \cite[Lemma~7]{ELW13} and \cite[Lemma~5.4]{LWxx}, respectively.

\begin{lemma}\label{lea4}
For $u\in\mathcal{K}^1$,
$$
2 \le \int_{-1}^1 \frac{\mathrm{d}x}{1+u(x)} \le \mathcal{E}_e(u) \le \int_{-1}^1 \left( 1 + \varepsilon^2 |\partial_x u(x)|^2 \right) \frac{\mathrm{d}x}{1+u(x)}\ .
$$
\end{lemma}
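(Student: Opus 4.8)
The plan is to establish the chain of inequalities in Lemma~\ref{lea4} by exploiting the variational characterization of $\psi_u$ together with the explicit comparison function $b_u$. The backbone of the argument is that $\mathcal{E}_e(u)$ is the minimal Dirichlet-type energy over all admissible competitors (Lemma~\ref{lea1}), which immediately yields the upper bound, while the lower bound comes from a clever choice of test function in the weak formulation combined with the pointwise bound $\psi_u \ge 1+z$ from \eqref{a100}.

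\textbf{Upper bound.} First I would apply Lemma~\ref{lea1} with the natural choice $\xi = b_u$, which satisfies $\xi - b_u = 0 \in H^1_D(\Omega(u))$ trivially. This gives
\begin{equation*}
\mathcal{E}_e(u) \le \int_{\Omega(u)} \left( \varepsilon^2 |\partial_x b_u|^2 + |\partial_z b_u|^2 \right)\, \mathrm{d}(x,z)\ .
\end{equation*}
Using the explicit formula \eqref{a5}, one computes $\partial_z b_u = (1+u)^{-1}$ and $\partial_x b_u = -(1+z)(1+u)^{-2}\partial_x u$ on $\Omega(u)$. Integrating the $z$-variable over $(-1,u(x))$, the $|\partial_z b_u|^2$ term contributes $\int_{-1}^1 (1+u)^{-1}\,\mathrm{d}x$, and the $\varepsilon^2|\partial_x b_u|^2$ term contributes $\varepsilon^2 \int_{-1}^1 |\partial_x u|^2 (1+u)^{-4} \int_{-1}^{u} (1+z)^2\,\mathrm{d}z\,\mathrm{d}x$. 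Since $\int_{-1}^{u}(1+z)^2\,\mathrm{d}z = (1+u)^3/3 \le (1+u)^3$, this bounds the second term by $\varepsilon^2\int_{-1}^1 |\partial_x u|^2(1+u)^{-1}\,\mathrm{d}x$, giving exactly the claimed right-hand side. (In fact the factor $1/3$ explains the middle term in \eqref{a10a}.)

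\textbf{Lower bound.} The harder direction is the inequality $\int_{-1}^1 (1+u)^{-1}\,\mathrm{d}x \le \mathcal{E}_e(u)$. Here the idea is to test the weak formulation of \eqref{psieq}--\eqref{psibc} against $\psi_u - b_u \in H^1_D(\Omega(u))$, which yields
\begin{equation*}
\int_{\Omega(u)} \left( \varepsilon^2 |\partial_x \psi_u|^2 + |\partial_z \psi_u|^2 \right)\, \mathrm{d}(x,z) = \int_{\Omega(u)} \left( \varepsilon^2 \partial_x \psi_u\, \partial_x b_u + \partial_z \psi_u\, \partial_z b_u \right)\, \mathrm{d}(x,z)\ ,
\end{equation*}
so that $\mathcal{E}_e(u)$ equals the right-hand side. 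The plan is then to discard the (nonnegative-in-expectation) $x$-derivative contribution appropriately and use $\partial_z b_u = (1+u)^{-1}$ to extract $\int_{-1}^1 (1+u)^{-1}\int_{-1}^u \partial_z\psi_u\,\mathrm{d}z\,\mathrm{d}x = \int_{-1}^1 (1+u)^{-1}\,\mathrm{d}x$ using the boundary values $\psi_u(x,u(x))=1$ and $\psi_u(x,-1)=0$. To make this rigorous one must control the $\varepsilon^2$ cross term; the cleanest route, as in \cite[Lemma~7]{ELW13}, is to invoke \eqref{a100}, namely $\psi_u \ge 1+z$, and compare energies against the one-dimensional profile $z \mapsto (1+z)/(1+u)$. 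Finally the trivial bound $2\le \int_{-1}^1(1+u)^{-1}\,\mathrm{d}x$ follows from $u \le 0$ on $I$, since then $(1+u)^{-1}\ge 1$ pointwise and the interval $I$ has length $2$.

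\textbf{Main obstacle.} I expect the delicate point to be the lower bound, specifically the handling of the $\varepsilon^2 \partial_x \psi_u\, \partial_x b_u$ term, which has no fixed sign. One cannot simply drop it; rather one needs the monotone structure provided by \eqref{a100}--\eqref{a102} or, alternatively, a one-dimensional slicing/Cauchy--Schwarz argument that shows the full energy dominates the energy of the affine-in-$z$ profile whose value is precisely $\int_{-1}^1(1+u)^{-1}\,\mathrm{d}x$. Since the statement explicitly attributes these bounds to \cite[Lemma~7]{ELW13} and \cite[Lemma~5.4]{LWxx}, I would lean on those references for the technical estimate and present here only the structural computation sketched above.
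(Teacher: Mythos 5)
Your upper bound is exactly the paper's argument (Lemma~\ref{lea1} with $\xi=b_u$, then the elementary estimate $\int_{-1}^{u}(1+z)^2\,\rd z=(1+u)^3/3\le(1+u)^3$), and the observation that $2\le\int_{-1}^1(1+u)^{-1}\,\rd x$ follows from $u\le 0$ is fine. The gap is in the lower bound: the route you present as primary --- testing the weak formulation against $\psi_u-b_u$ to obtain $\mathcal{E}_e(u)=\int_{\Omega(u)}\left(\varepsilon^2\partial_x\psi_u\,\partial_x b_u+\partial_z\psi_u\,\partial_z b_u\right)\rd(x,z)$ and then ``discarding'' the $x$-derivative contribution --- does not close, as you yourself concede: the cross term $\varepsilon^2\partial_x\psi_u\,\partial_x b_u$ has no sign, and nothing in \eqref{a100}--\eqref{a102} allows you to drop it from that identity. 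Ending with the statement that you would ``lean on the references for the technical estimate'' leaves the middle inequality unproved; in a self-contained proof this is a genuine hole.

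What you mention only in passing as an ``alternative'' --- the one-dimensional slicing/Cauchy--Schwarz argument --- is in fact the entire proof in the paper, and it is two lines; no cross term ever appears, because one discards a term from the energy itself (a sum of squares), not from an integration-by-parts identity. Since $\mathcal{E}_e(u)\ge\int_{\Omega(u)}|\partial_z\psi_u|^2\,\rd(x,z)$, it suffices to bound each vertical slice: for a.e. $x\in I$, the boundary values $\psi_u(x,u(x))=1$ and $\psi_u(x,-1)=0$ together with the Cauchy--Schwarz inequality give
\begin{equation*}
1=\left(\int_{-1}^{u(x)}\partial_z\psi_u(x,z)\,\rd z\right)^2\le (1+u(x))\int_{-1}^{u(x)}\left(\partial_z\psi_u(x,z)\right)^2\rd z\ ,
\end{equation*}
that is, $\int_{-1}^{u(x)}(\partial_z\psi_u)^2\,\rd z\ge (1+u(x))^{-1}$, and integrating over $x\in I$ yields $\int_{-1}^1(1+u)^{-1}\,\rd x\le\mathcal{E}_e(u)$. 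So your attempt contains the correct idea but misfiles it as a fallback and never executes it, while the route you actually develop is a dead end.
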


\begin{proof}
We recall the proof for the sake of completeness. We first deduce from \eqref{psibc} and the Cauchy-Schwarz inequality that, for $x\in I$,
\begin{align*}
\frac{1}{1+u(x)} & = \frac{\left( \psi_u(x,u(x)) - \psi_u(x,-1) \right)^2}{1+u(x)} = \frac{1}{1+u(x)} \left( \int_{-1}^{u(x)} \partial_z \psi_u(x,z)\ \mathrm{d}z \right)^2 \\
& \le \int_{-1}^{u(x)} \left( \partial_z \psi_u(x,z) \right)^2\ \mathrm{d}z \ . 
\end{align*}
Integrating the above inequality with respect to $x\in I$ readily gives the first inequality of Lemma~\ref{lea4}. We next infer from Lemma~\ref{lea1} with $\xi=b_u$, the latter being defined in \eqref{a5}, that
\begin{align*}
\mathcal{E}_e(u) & \le \int_{\Omega(u)} \left( \varepsilon^2 |\partial_x b_u|^2 + |\partial_z b_u|^2 \right)\ \mathrm{d}(x,z) \\ & \le \int_{\Omega(u)} \left[ \varepsilon^2 \frac{(1+z)^2}{(1+u(x))^4} |\partial_x u(x)|^2 + \frac{1}{(1+u(x))^2} \right]\ \mathrm{d} (x,z) \ ,
\end{align*}
from which the second inequality of Lemma~\ref{lea4} follows.
\end{proof}

Finally we recall the existence of a non-positive eigenfunction of the linear operator $\beta\partial_x^4 - \tau \partial_x^2\in \mathcal{L}(H_D^4(I),L_2(I))$ along with some of its properties.

\begin{lemma}\label{lea4.5}
\begin{itemize}
\item[(i)] The linear operator $\beta\partial_x^4 - \tau \partial_x^2\in \mathcal{L}(H_D^4(I),L_2(I))$ has a non-positive eigenfunction $\varphi_1\in H_D^4(I)\cap C^ \infty([-1,1])$ associated to a positive eigenvalue $\mu_1$. Moreover, $\varphi_1$ is even and it can be chosen such that $\varphi_1<0$ in $I$ with $\min_{[-1,1]} \varphi_1=-1$.
\item[(ii)] Given $\rho\in (2,\infty)$, there is $\eta_\rho\in (0,1)$ such that $\mathcal{E}_e(\eta_\rho\varphi_1) = \rho$ and $\eta_\rho\to 0$ as $\rho\to 2$.
\end{itemize}
\end{lemma}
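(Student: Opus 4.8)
The plan is to prove the two parts of Lemma~\ref{lea4.5} separately, treating the eigenvalue problem first and then using the resulting eigenfunction to set up the constrained-energy normalization in part~(ii).

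\medskip

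\textbf{Part (i): construction of the eigenfunction.}
First I would study the fourth-order operator $A:=\beta\partial_x^4-\tau\partial_x^2$ on $H_D^4(I)$ subject to the clamped boundary conditions $v(\pm1)=\partial_x v(\pm1)=0$. The natural functional-analytic setup is to view $A$ as the operator associated with the coercive, symmetric bilinear form
\begin{equation*}
\mathfrak{a}(v,w):=\beta\int_{-1}^1 \partial_x^2 v\,\partial_x^2 w\,\rd x+\tau\int_{-1}^1 \partial_x v\,\partial_x w\,\rd x
\end{equation*}
on $H_D^2(I)$, whose form domain embeds compactly into $L_2(I)$. Thus $A$ has compact resolvent, is self-adjoint and positive (since $\mathfrak{a}(v,v)\ge \beta\|\partial_x^2 v\|_{L_2(I)}^2$ controls $v$ via Poincaré on $H_D^2(I)$), so its spectrum consists of a sequence of positive eigenvalues $0<\mu_1\le\mu_2\le\cdots$ with $L_2$-orthonormal eigenfunctions; elliptic regularity gives $\varphi_1\in C^\infty([-1,1])$. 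The substance of the claim is the sign and symmetry of the first eigenfunction. For symmetry I would use the reflection invariance of $A$ and the clamped boundary conditions: if $\varphi_1(x)$ is a first eigenfunction then so is $\varphi_1(-x)$, and since the bottom eigenvalue for clamped plate problems is simple (this is the delicate point), $\varphi_1$ must be even. The main obstacle, and the genuinely nontrivial input, is the \emph{positivity/one-signedness} of $\varphi_1$: unlike second-order operators, the clamped fourth-order (bi-Laplacian-type) operator does \emph{not} satisfy a maximum principle in general, so one cannot invoke Krein--Rutman naively. I would appeal to the known positivity-preserving results for the one-dimensional clamped problem on an interval—where the relevant Green's function is in fact positive—to conclude that the ground-state eigenfunction does not change sign; see the fourth-order MEMS literature already cited in the paper (e.g.\ \cite{LWxxz}). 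Having fixed the sign so that $\varphi_1\le0$, I would normalize by the sup-norm, i.e.\ rescale so that $\min_{[-1,1]}\varphi_1=-1$, which is legitimate since eigenfunctions are defined up to a scalar multiple; the strict negativity $\varphi_1<0$ in the open interval $I$ then follows from unique continuation / the Hopf-type boundary behavior together with the interior positivity of the Green's function.

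\medskip

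\textbf{Part (ii): normalizing the electrostatic energy.}
With $\varphi_1$ fixed as above, I would analyze the scalar function $h(\eta):=\mathcal{E}_e(\eta\varphi_1)$ for $\eta\in[0,1)$. Since $-1\le\varphi_1\le0$ with $\min\varphi_1=-1$, for $\eta\in[0,1)$ we have $-1<\eta\varphi_1\le0$ on $I$, hence $\eta\varphi_1\in\mathcal{K}^1$ and even $\eta\varphi_1\in\mathcal{K}^{2-\alpha}$, so $\mathcal{E}_e(\eta\varphi_1)$ is well-defined and, by Proposition~\ref{pra3}, the map $\eta\mapsto h(\eta)$ is continuous on $[0,1)$. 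The strategy is an intermediate-value argument. At $\eta=0$ we have $\eta\varphi_1\equiv0$, so $\Omega(0)=I\times(-1,0)$ and an explicit computation (the harmonic potential is $\psi_0(x,z)=1+z$) gives $h(0)=\mathcal{E}_e(0)=\int_{-1}^1\int_{-1}^0 1\,\rd z\,\rd x=2$, matching the sharp lower bound in Lemma~\ref{lea4}. As $\eta\uparrow1$, the deflection $\eta\varphi_1$ approaches the value $-1$ at the point(s) where $\varphi_1=-1$, i.e.\ touchdown occurs; by the lower bound $\mathcal{E}_e(\eta\varphi_1)\ge\int_{-1}^1\frac{\rd x}{1+\eta\varphi_1(x)}$ in Lemma~\ref{lea4}, and since the integrand blows up (because $1+\eta\varphi_1\to0$ on a set of positive measure, or at least the integral diverges as the minimum of $1+\eta\varphi_1$ tends to $0$ near the touchdown point), I expect $h(\eta)\to+\infty$ as $\eta\to1^-$. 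The point requiring a small amount of care here is verifying the divergence of $\int_{-1}^1\frac{\rd x}{1+\eta\varphi_1(x)}$; since $\varphi_1$ is smooth with $\min\varphi_1=-1$ attained (at $x=0$ by evenness, after checking it is a nondegenerate interior minimum), one has $1+\varphi_1(x)\sim c\,x^2$ near $x=0$, and $\int \rd x/(1-\eta+c\eta x^2)\to\infty$ as $\eta\to1$—this is exactly the logarithmic/algebraic blow-up one needs. Consequently $h$ is a continuous function on $[0,1)$ with $h(0)=2$ and $h(\eta)\to\infty$, so for every $\rho>2$ the intermediate value theorem furnishes some $\eta_\rho\in(0,1)$ with $h(\eta_\rho)=\mathcal{E}_e(\eta_\rho\varphi_1)=\rho$. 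Finally, monotonicity from Proposition~\ref{pra2} (applied to $\eta\varphi_1\le\eta'\varphi_1$ for $\eta\le\eta'$, recalling $\varphi_1\le0$) shows $h$ is nondecreasing, which both legitimizes selecting $\eta_\rho$ and yields that $\eta_\rho\to0$ as $\rho\to2$: indeed, by monotonicity $\eta_\rho$ decreases as $\rho\downarrow2$ to some limit $\eta_*\ge0$, and continuity of $h$ forces $h(\eta_*)=2=h(0)$, whence $\eta_*=0$. The main obstacle overall is the positivity claim in part~(i), which must be imported from the fourth-order positivity theory rather than proved by elementary maximum-principle arguments; the energy-normalization in part~(ii) is then a soft IVT-plus-monotonicity argument once the sharp bounds of Lemma~\ref{lea4} are in hand.
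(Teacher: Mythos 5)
Your proposal is correct and takes essentially the same route as the paper: part~(i) is imported from the one-dimensional fourth-order positivity (Boggio-type) literature rather than proved from scratch, and part~(ii) combines the lower bound of Lemma~\ref{lea4}, the monotonicity and continuity of $\eta\mapsto\mathcal{E}_e(\eta\varphi_1)$ from Propositions~\ref{pra2} and~\ref{pra3}, a Taylor expansion at the minimum of $\varphi_1$ to force divergence as $\eta\to 1$, and an intermediate-value argument. Two small refinements: your final step ``whence $\eta_*=0$'' additionally needs the strict inequality $\mathcal{E}_e(\eta\varphi_1)\ge\int_{-1}^1(1+\eta\varphi_1)^{-1}\,\rd x>2$ for $\eta>0$ (monotonicity plus $h(\eta_*)=h(0)$ alone does not exclude $h$ being constant near $0$), and no nondegeneracy of the minimum is required, since the one-sided Taylor bound $1+\varphi_1(x)\le\|\partial_x^2\varphi_1\|_{L_\infty(I)}|x-x_0|^2$ already yields $(1+\varphi_1)^{-1}\notin L_1(I)$ and hence the divergence.
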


\begin{proof}
Part~(i) follows from \cite[Theorem~4.7]{LWxy}, which is a consequence of the version of Boggio's principle \cite{Bo05} established in \cite{Gr02, LWxy, Ow97}. 
As for part~(ii), note that $\eta\varphi_1\in \mathcal{K}^1$ for $\eta\in [0,1)$ and
\begin{equation}
J(\eta) := \mathcal{E}_e(\eta\varphi_1) \ge \int_{-1}^1 \frac{\mathrm{d}x}{1+\eta\varphi_1(x)}\ , \quad \eta\in [0,1)\ , \label{a15}
\end{equation}
by Lemma~\ref{lea4}. We infer from Proposition~\ref{pra2} and Proposition~\ref{pra3} that $J$ is a non-decreasing and continuous function on $[0,1)$ with $J(0)=2$. In addition, $\varphi_1$ reaches necessarily its minimum $-1$ at some $x_0\in I$ and thus satisfies $\varphi_1(x_0)=-1$ and $\partial_x \varphi_1(x_0)=0$. Therefore, 
$$
0 \le 1+\varphi_1(x) \le \|\partial_x^2 \varphi_1\|_{L_\infty(I)}\ |x-x_0|^2 \;\;\text{ as }\;\; x\to x_0\ ,
$$ 
which implies that $(1+\varphi_1)^{-1}\not\in L_1(I)$. This property along with \eqref{a15} entails that $J(\eta)\to\infty$ as $\eta\to 1$. Recalling the continuity of $J$, we have thus shown that $[2,\infty)$ equals the range of $J$. The existence of $\eta_\rho$ for each $\rho\in (2,\infty)$ such that $\mathcal{E}_e(\eta_\rho\varphi_1) = \rho$ now follows. That $\eta_\rho\to 0$ as $\rho\to 2$ is a consequence of the fact that \eqref{a15} implies $J(\eta)=2$ if and only if $\eta=0$. 
\end{proof}

\section{A minimization problem with constraint}\label{sec:mpc}

Recall that, for $u\in H_D^2(I)$, the mechanical energy $\mathcal{E}_m$ is given by
\begin{equation*}
\mathcal{E}_m(u) = \frac{\beta}{2} \|\partial_x^2 u \|_{L_2(I)}^2 + \frac{1}{2}\left(\tau + \frac{a}{2}  \|\partial_x u\|_{L_2(I)}^2\right) \|\partial_x u \|_{L_2(I)}^2\ . 
\end{equation*}
Our goal is now to minimize $\mathcal{E}_m$ on the set
\begin{equation*}
\mathcal{A}_\rho := \left\{ u \in \mathcal{K}^2\ ;\ u \;\text{ is even and}\; \mathcal{E}_e(u)=\rho \right\} 
\end{equation*}
for a given $\rho\in (2,\infty)$. Note that $\mathcal{A}_\rho$ is non-empty as it contains $\eta_\rho \varphi_1$ according to Lemma~\ref{lea4.5}. We set
\begin{equation*}
\mu(\rho) := \inf_{u\in \mathcal{A}_\rho} \mathcal{E}_m(u) \ge 0
\end{equation*} 
 and first collect some properties of the function $\rho\mapsto \mu(\rho)$.

\begin{proposition}\label{leb3}
The function $\mu$ is non-decreasing on $(2,\infty)$ with
$$
\lim_{\rho\to 2} \mu(\rho)=0 \;\;\text{ and }\;\; \mu_\infty := \lim_{\rho\to \infty} \mu(\rho)  < \infty\ .
$$
\end{proposition}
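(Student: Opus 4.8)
The plan is to establish three separate facts about $\mu$: monotonicity, the limit as $\rho\to 2$, and the finiteness of the limit as $\rho\to\infty$. Each will exploit a different ingredient from the preceding sections, and the organizing principle throughout is that $\mathcal{E}_e$ is monotone (Proposition~\ref{pra2}) while $\mathcal{E}_m$ is easy to estimate on explicit test functions.

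For \textbf{monotonicity}, fix $2<\rho_1<\rho_2$ and take any $u\in\mathcal{A}_{\rho_2}$, so $\mathcal{E}_e(u)=\rho_2$. The idea is to produce from $u$ a competitor in $\mathcal{A}_{\rho_1}$ with no larger mechanical energy. Since $\mathcal{E}_e$ is monotone and continuous along the ray $\theta\mapsto\theta u$ for $\theta\in[0,1]$ (indeed $\theta_1 u\le\theta_2 u$ when $u\le 0$, so $\mathcal{E}_e(\theta u)$ is non-decreasing and continuous by Propositions~\ref{pra2} and~\ref{pra3}), and since $\mathcal{E}_e(0\cdot u)=2<\rho_1<\rho_2=\mathcal{E}_e(1\cdot u)$, there is some $\theta\in(0,1)$ with $\mathcal{E}_e(\theta u)=\rho_1$. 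The function $\theta u$ is even, lies in $\mathcal{K}^2$, and satisfies $\mathcal{E}_m(\theta u)\le\mathcal{E}_m(u)$ because $\mathcal{E}_m$ is a sum of terms homogeneous of degree $2$ and $4$ in $u$ and is monotone increasing under the scaling $\theta\in[0,1]$. Hence $\mu(\rho_1)\le\mathcal{E}_m(\theta u)\le\mathcal{E}_m(u)$; taking the infimum over $u\in\mathcal{A}_{\rho_2}$ gives $\mu(\rho_1)\le\mu(\rho_2)$.

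For the \textbf{limit as} $\rho\to 2$, I would simply use the explicit competitor $\eta_\rho\varphi_1$ from Lemma~\ref{lea4.5}(ii), which belongs to $\mathcal{A}_\rho$. Then $0\le\mu(\rho)\le\mathcal{E}_m(\eta_\rho\varphi_1)=\eta_\rho^2\,\bigl(\tfrac{\beta}{2}\|\partial_x^2\varphi_1\|_{L_2}^2+\tfrac{\tau}{2}\|\partial_x\varphi_1\|_{L_2}^2\bigr)+\eta_\rho^4\,\tfrac{a}{4}\|\partial_x\varphi_1\|_{L_2}^4$. Since $\eta_\rho\to 0$ as $\rho\to 2$, the right-hand side tends to $0$, forcing $\mu(\rho)\to 0$.

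The \textbf{finiteness of} $\mu_\infty$ is the step I expect to be the main obstacle, since one must produce, for \emph{every} large $\rho$, a competitor whose mechanical energy stays uniformly bounded even as $\mathcal{E}_e\to\infty$. The scaling $\eta_\rho\varphi_1$ will not work because $\mathcal{E}_m(\eta_\rho\varphi_1)$ blows up as $\eta_\rho\to 1$. The natural idea is to exploit the second (upper) bound in Lemma~\ref{lea4}, namely $\mathcal{E}_e(u)\le\int_{-1}^1(1+\varepsilon^2|\partial_x u|^2)\,\frac{\rd x}{1+u}$, which shows that $\mathcal{E}_e$ can be made large by functions that approach $-1$ on a small set while keeping $\mathcal{E}_m$ controlled. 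Concretely I would fix a single even "profile" function $w_0\in\mathcal{K}^2$ that nearly touches down (e.g. built from a bump localized near $x=0$), note that by translating/sharpening it one can drive $\int_{-1}^1(1+u)^{-1}\rd x$ — and hence, via the lower bound in Lemma~\ref{lea4}, $\mathcal{E}_e$ — to infinity while the $H^2$-norm grows only at a controlled rate; the monotonicity of $\mu$ already proved then lets me pass to the supremum. Since $\mu$ is non-decreasing, $\mu_\infty=\lim_{\rho\to\infty}\mu(\rho)=\sup_{\rho>2}\mu(\rho)$ is well-defined in $(0,\infty]$, and the real content is to exhibit a family $(v_\rho)_{\rho>2}$ with $v_\rho\in\mathcal{A}_\rho$ and $\sup_\rho\mathcal{E}_m(v_\rho)<\infty$. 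I anticipate this requires a careful one-parameter family of test deflections whose deflection concentrates near touchdown — increasing $\mathcal{E}_e$ without bound — yet whose curvature $\|\partial_x^2 v_\rho\|_{L_2}$ remains bounded; verifying the intermediate-value adjustment (rescaling to land exactly on $\mathcal{E}_e=\rho$ via Propositions~\ref{pra2}--\ref{pra3}) and the uniform energy bound simultaneously is the delicate point.
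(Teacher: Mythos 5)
Your monotonicity argument and your treatment of the limit $\rho\to 2$ are correct and are exactly the paper's: scale an arbitrary $v\in\mathcal{A}_{\rho_2}$ down along $t\mapsto tv$, using Propositions~\ref{pra2} and~\ref{pra3} and the intermediate value theorem to land in $\mathcal{A}_{\rho_1}$, and use the competitor $\eta_\rho\varphi_1$ with $\eta_\rho\to 0$ for the limit at $\rho=2$. The problem is your third step. Your claim that ``the scaling $\eta_\rho\varphi_1$ will not work because $\mathcal{E}_m(\eta_\rho\varphi_1)$ blows up as $\eta_\rho\to 1$'' is false, and this false belief is what makes you abandon a proof you essentially already had. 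The mechanical energy has no singularity at touchdown: $\varphi_1\in H_D^4(I)\subset H_D^2(I)$, so $\mathcal{E}_m(\varphi_1)<\infty$, and for every $\eta\in(0,1)$ your own computation from the second step gives
$$
\mathcal{E}_m(\eta\varphi_1)=\eta^2\Bigl(\tfrac{\beta}{2}\|\partial_x^2\varphi_1\|_{L_2(I)}^2+\tfrac{\tau}{2}\|\partial_x\varphi_1\|_{L_2(I)}^2\Bigr)+\eta^4\,\tfrac{a}{4}\|\partial_x\varphi_1\|_{L_2(I)}^4\ \le\ \eta^2\,\mathcal{E}_m(\varphi_1)\ \le\ \mathcal{E}_m(\varphi_1)\ ,
$$
uniformly in $\eta\in(0,1)$. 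What blows up as $\eta\to 1$ is the \emph{electrostatic} energy, because $\varphi_1$ attains $-1$ quadratically and hence $(1+\varphi_1)^{-1}\notin L_1(I)$; that is precisely why Lemma~\ref{lea4.5}(ii) produces $\eta_\rho$ with $\mathcal{E}_e(\eta_\rho\varphi_1)=\rho$ for \emph{every} $\rho>2$. So the family $v_\rho:=\eta_\rho\varphi_1$, which you already invoked near $\rho=2$, is exactly the family you declare to be ``the main obstacle'': $v_\rho\in\mathcal{A}_\rho$ and $\sup_{\rho>2}\mathcal{E}_m(v_\rho)\le\mathcal{E}_m(\varphi_1)<\infty$, whence $\mu(\rho)\le\mathcal{E}_m(\varphi_1)$ for all $\rho>2$ and, combined with monotonicity, $\mu_\infty\le\mathcal{E}_m(\varphi_1)<\infty$. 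This is the paper's argument.

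As written, your proof of the finiteness of $\mu_\infty$ is therefore incomplete: the bump construction you sketch is never carried out, and in the form you state it — allowing the $H^2$-norm of the competitors to ``grow at a controlled rate'' — it would not even suffice, since finiteness of $\mu_\infty$ requires $\sup_\rho\mathcal{E}_m(v_\rho)<\infty$, a uniform bound, not controlled growth (you in fact acknowledge this requirement a sentence later, so your sketch is internally inconsistent). Had you carried it out correctly, with a fixed even profile touching down quadratically and scaled by a factor tending to $1$, you would simply have reproduced the family $\eta\varphi_1$ with an ad hoc profile in place of the eigenfunction. The fix is one line: the bound $\mathcal{E}_m(\eta_\rho\varphi_1)\le\eta_\rho^2\,\mathcal{E}_m(\varphi_1)$ that you used as $\rho\to 2$ holds uniformly over all $\rho>2$ because $\eta_\rho<1$.
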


\begin{proof}
Let $\rho\in (2,\infty)$. Since $\eta_\rho\varphi_1\in \mathcal{A}_\rho$ is an eigenfunction of the linear operator $\beta\partial_x^4 - \tau \partial_x^2$ associated to the eigenvalue $\mu_1$ and since $\eta_\rho^2<1$, a straightforward computation gives
$$
0 \le \mu(\rho) \le \mathcal{E}_m(\eta_\rho\varphi_1)  \le \eta_\rho^2 \mathcal{E}_m(\varphi_1)\ .
$$
Since $\mathcal{E}_m(\varphi_1)$ is finite, $\eta_\rho\in (0,1)$, and $\eta_\rho\to 0$ as $\rho\to 2$ by Lemma~\ref{lea4.5}, we readily obtain 
\begin{equation}
\lim_{\rho\to 2} \mu(\rho) = 0 \;\;\text{ and }\;\; 0 \le \mu(\rho) \le \mathcal{E}_m(\varphi_1) \ . \label{b12}
\end{equation}
Let us now check the monotonicity of $\mu$. To this end, fix $2<\rho_1<\rho_2$ and $v\in\mathcal{A}_{\rho_2}$. For all $t\in [0,1]$, the function $tv$ belongs to $\mathcal{K}^2$, and Proposition~\ref{pra2} and Proposition~\ref{pra3} imply that the function $h: [0,1]\to\mathbb{R}$, defined by $h(t) := \mathcal{E}_e(tv)$, is continuous and non-decreasing with $h(0)=2$ and $h(1)=\rho_2$. Since $\rho_1\in (2,\rho_2)$, there is $t_1\in (0,1)$ such that $h(t_1)=\rho_1$, that is, $t_1 v\in \mathcal{A}_{\rho_1}$. Consequently,
$$
\mu(\rho_1) \le \mathcal{E}_m(t_1 v) \le \mathcal{E}_m(v)\ .
$$
As $v$ was arbitrarily chosen in $\mathcal{A}_{\rho_2}$, the above inequality allows us to conclude that $\mu(\rho_1)\le \mu(\rho_2)$. Thus, $\mu$ is a non-decreasing function on $(2,\infty)$ which is bounded from above by $\mathcal{E}_m(\varphi_1) $ according to \eqref{b12}. It then has a finite limit $\mu_\infty\in [0,\mathcal{E}_m(\varphi_1) ]$ as $\rho\to\infty$. 
\end{proof}

We next show the existence of $u_\rho\in \mathcal{A}_\rho$ such that
\begin{equation}
\mathcal{E}_m(u_\rho) = \mu(\rho)\ , \label{b3}
\end{equation}
that is, $u_\rho$ is a minimizer of $\mathcal{E}_m$ in $\mathcal{A}_\rho$.

\begin{proposition}\label{thb1}
For each $\rho\in (2,\infty)$, there is at least one solution $u_\rho\in\mathcal{A}_\rho$ to the minimization problem~\eqref{b3}. 
\end{proposition}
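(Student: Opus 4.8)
The plan is to apply the direct method of the calculus of variations. First I would pick a minimizing sequence $(u_n)_{n\ge 1}$ in $\mathcal{A}_\rho$, so that $\mathcal{E}_m(u_n)\to\mu(\rho)$ as $n\to\infty$. Since $\mathcal{E}_m(u)\ge \frac{\beta}{2}\|\partial_x^2 u\|_{L_2(I)}^2$ and the clamped boundary conditions built into $\mathcal{K}^2\subset H_D^2(I)$ make $\|\partial_x^2\,\cdot\,\|_{L_2(I)}$ an equivalent norm on $H_D^2(I)$ (Poincar\'e), the sequence $(u_n)$ is bounded in $H^2(I)$. Extracting a subsequence, I obtain $u_\rho\in H_D^2(I)$ with $u_n\rightharpoonup u_\rho$ weakly in $H^2(I)$; compactness of the embedding $H^2(I)\hookrightarrow C^1(\bar{I})$ (recall $I$ is one-dimensional) then upgrades this to $u_n\to u_\rho$ strongly in $H^1(I)$ and uniformly together with first derivatives.

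Next I would verify that the limit is admissible, i.e. $u_\rho\in\mathcal{A}_\rho$. Evenness and the bound $u_\rho\le 0$ pass to the uniform limit, as do $u_\rho(\pm1)=\partial_x u_\rho(\pm1)=0$. The delicate point is to exclude touchdown, namely to prove $u_\rho>-1$ on $I$, which is needed both for admissibility and to bring $\mathcal{E}_e$ into play. Suppose $u_\rho(x_0)=-1$ at some interior $x_0\in I$ (it cannot occur at $\pm1$). Then $x_0$ is an interior minimum, so $\partial_x u_\rho(x_0)=0$, and since $H^2(I)\hookrightarrow C^{1,1/2}(\bar{I})$ one has $0\le 1+u_\rho(x)\le C|x-x_0|^{3/2}$ near $x_0$; hence $(1+u_\rho)^{-1}\notin L_1(I)$. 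On the other hand, Lemma~\ref{lea4} gives $\int_{-1}^1 (1+u_n)^{-1}\,\rd x\le \mathcal{E}_e(u_n)=\rho$, and Fatou's lemma combined with the uniform convergence $u_n\to u_\rho$ yields $\int_{-1}^1 (1+u_\rho)^{-1}\,\rd x\le\rho<\infty$, a contradiction. Thus $u_\rho>-1$, so $u_\rho\in\mathcal{K}^2\subset\mathcal{K}^1$. The continuity of $\mathcal{E}_e$ on $\mathcal{K}^1$ for the $H^1$-topology (Proposition~\ref{pra3}) together with $u_n\to u_\rho$ in $H^1(I)$ then gives $\mathcal{E}_e(u_\rho)=\lim_n \mathcal{E}_e(u_n)=\rho$, whence $u_\rho\in\mathcal{A}_\rho$.

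Finally I would invoke the weak lower semicontinuity of $\mathcal{E}_m$ along $u_n\rightharpoonup u_\rho$: the quadratic term $\frac{\beta}{2}\|\partial_x^2 u_n\|_{L_2(I)}^2$ is weakly lower semicontinuous, while the strong $H^1$-convergence ensures $\|\partial_x u_n\|_{L_2(I)}^2\to\|\partial_x u_\rho\|_{L_2(I)}^2$ and likewise for its square, so the remaining stretching terms converge. Hence $\mathcal{E}_m(u_\rho)\le\liminf_n \mathcal{E}_m(u_n)=\mu(\rho)$. Since $u_\rho\in\mathcal{A}_\rho$ forces $\mathcal{E}_m(u_\rho)\ge\mu(\rho)$ by definition of the infimum, I conclude $\mathcal{E}_m(u_\rho)=\mu(\rho)$, which is precisely \eqref{b3}.

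The main obstacle is the exclusion of touchdown for the limit $u_\rho$: without it the constraint set could fail to be closed under the relevant convergence and $\mathcal{E}_e(u_\rho)$ would be undefined. The key is that the energy bound $\mathcal{E}_e(u_n)=\rho$ controls $\int_{-1}^1 (1+u_n)^{-1}\,\rd x$ uniformly via Lemma~\ref{lea4}, while the $H^2$-regularity dictates the precise $|x-x_0|^{3/2}$ vanishing rate of $1+u_\rho$ at a hypothetical contact point, which is too strong to keep $(1+u_\rho)^{-1}$ integrable. This quantitative competition is exactly what prevents the minimizing sequence from degenerating to the singular touchdown configuration.
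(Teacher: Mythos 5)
Your proof is correct and follows the same direct-method skeleton as the paper: minimizing sequence, $H^2$ bound from coercivity of $\mathcal{E}_m$ on $H_D^2(I)$, weak $H^2$/strong $C^1$ convergence of a subsequence, continuity of $\mathcal{E}_e$ on $\mathcal{K}^1$ (Proposition~\ref{pra3}) to preserve the constraint, and weak lower semicontinuity of $\mathcal{E}_m$ to conclude. The one genuine divergence is the no-touchdown step, which is the heart of the matter. The paper proves a quantitative pointwise bound (Lemma~\ref{leb2}): any $v\in\mathcal{A}_\rho$ with $\|\partial_x^2 v\|_{L_2(I)}\le K$ satisfies $\min v\ge \rho^{-3}K^{-2}-1$; applied uniformly along the minimizing sequence (estimate~\eqref{b14}), this bound passes to the $C^1$ limit and immediately places the limit in $\mathcal{K}^2$. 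You instead argue by contradiction on the limit alone: Fatou's lemma transfers the uniform bound $\int_{-1}^1 (1+u_n)^{-1}\,\rd x\le \mathcal{E}_e(u_n)=\rho$ of Lemma~\ref{lea4} to the limit, while a touchdown point $x_0$ would force $1+u_\rho\le C|x-x_0|^{3/2}$ and hence non-integrability of $(1+u_\rho)^{-1}$ --- incidentally the same mechanism the paper uses in Lemma~\ref{lea4.5}(ii) to show $(1+\varphi_1)^{-1}\notin L_1(I)$. Both arguments rest on exactly the same two ingredients (the lower bound of Lemma~\ref{lea4} and the $|x-x_0|^{3/2}$ vanishing rate dictated by the $H^2$ bound at an interior minimum), so neither is more elementary; the paper's version buys an explicit, uniform distance from touchdown valid for every quasi-minimizer in the sequence, whereas yours is slightly leaner, trading the explicit constant for a soft measure-theoretic contradiction at the limit. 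Either route completes the proof.
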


The first step of the proof of Proposition~\ref{thb1} is a pointwise lower bound for functions in $\mathcal{A}_\rho$.

\begin{lemma}\label{leb2}
Given $\rho>2$ and $v\in\mathcal{A}_\rho$, assume that there is $K\ge 2/\rho$ such that $\|\partial_x^2 v\|_{L_2(I)} \le K$. Then
$$
\min_{[-1,1]} v \ge \frac{1}{\rho^3 K^2} - 1\ .
$$
\end{lemma}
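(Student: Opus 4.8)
The plan is to turn the prescribed energy constraint into the integral bound $\int_{-1}^1 (1+v)^{-1}\,\mathrm{d}x \le \rho$ coming from Lemma~\ref{lea4}, and then to use the control on $\partial_x^2 v$ to show that near its minimum $v$ cannot rise too quickly; the tension between ``$v$ is very close to $-1$ somewhere'' and ``$\int (1+v)^{-1}$ stays below $\rho$'' is what forces the minimum up.

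First I would record the consequences of the hypotheses. Since $v\in\mathcal{A}_\rho$ we have $\mathcal{E}_e(v)=\rho$, so Lemma~\ref{lea4} gives $\int_{-1}^1 (1+v)^{-1}\,\mathrm{d}x\le\rho$. Because $\rho>2=\mathcal{E}_e(0)$ while $v\le 0$, the function $v$ is not identically zero, and as $v\in H_D^2(I)\hookrightarrow C^1([-1,1])$ with $v(\pm 1)=0$, its minimum $m:=\min_{[-1,1]}v<0$ is attained at some interior point $x_0\in(-1,1)$ where $\partial_x v(x_0)=0$. Writing $v(x)-m=\int_{x_0}^x (x-s)\,\partial_x^2 v(s)\,\mathrm{d}s$ and applying the Cauchy--Schwarz inequality together with $\|\partial_x^2 v\|_{L_2(I)}\le K$ yields the pointwise bound
\begin{equation*}
1+v(x) \le (1+m) + \frac{K}{\sqrt{3}}\,|x-x_0|^{3/2}\ , \qquad x\in I\ .
\end{equation*}

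Next I would feed this into the integral bound. Since $x_0\in(-1,1)$, one of the intervals $[-1,x_0]$ and $[x_0,1]$ has length at least $1$; restricting the (positive) integrand to that side and substituting $t=|x-x_0|$ gives, with $a:=1+m$ and $b:=K/\sqrt{3}$,
\begin{equation*}
\rho \ge \int_{-1}^1 \frac{\mathrm{d}x}{1+v(x)} \ge \int_0^1 \frac{\mathrm{d}t}{a + b\,t^{3/2}}\ .
\end{equation*}
The conclusion then follows by contradiction. Assuming $a<a^*:=(\rho^3K^2)^{-1}$ and rescaling $t=(a^*/b)^{2/3}s$ in the integral evaluated at $a^*$ reduces matters to the scalar integral $\int_0^S (1+s^{3/2})^{-1}\,\mathrm{d}s$, where the prefactor works out to $(a^*)^{1/3}b^{2/3}=(\rho\,3^{1/3})^{-1}$ and the upper limit is $S=(b/a^*)^{2/3}=\rho^2K^2\,3^{-1/3}$. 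Here the standing assumption $K\ge 2/\rho$ is used crucially: it gives $S\ge 4\cdot 3^{-1/3}>2$. An elementary estimate of this scalar integral, e.g. $\int_0^1(1+s^{3/2})^{-1}\,\mathrm{d}s\ge\int_0^1(1+s)^{-1}\,\mathrm{d}s=\ln 2$ plus a crude lower bound on $\int_1^2(1+s^{3/2})^{-1}\,\mathrm{d}s$, shows $\int_0^S(1+s^{3/2})^{-1}\,\mathrm{d}s>3^{-1/3}$, whence the displayed integral at $a^*$ exceeds $\rho$; since $a<a^*$ only increases it, this contradicts $\int_0^1 (a+bt^{3/2})^{-1}\,\mathrm{d}t\le\rho$. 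Therefore $1+m\ge a^*=(\rho^3K^2)^{-1}$.

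I expect the genuine obstacle to be exactly this last quantitative step. A single crude rectangle bound for $\int_0^1 (a+bt^{3/2})^{-1}\,\mathrm{d}t$ only delivers a fraction of the claimed constant (the threshold $3^{-1/3}$ is remarkably close to $\ln 2$), so one must estimate the rescaled scalar integral carefully and genuinely invoke $K\ge 2/\rho$ to keep the upper limit $S$ bounded away from where the integral would be too small. The remaining ingredients---the embedding $H^2(I)\hookrightarrow C^1$, the Cauchy--Schwarz bound on $v$, and the reduction via Lemma~\ref{lea4}---are routine.
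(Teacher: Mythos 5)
Your proof is correct, and its decisive step differs genuinely from the paper's. The skeleton is shared: both proofs invoke Lemma~\ref{lea4} to get $\int_{-1}^1(1+v)^{-1}\,\mathrm{d}x\le\rho$, locate an interior minimum $m=v(x_0)$ with $\partial_x v(x_0)=0$, and use Taylor plus Cauchy--Schwarz to get $1+v(x)\le(1+m)+K|x-x_0|^{3/2}/\sqrt{3}$. From there the paper exploits the evenness of $v$ (built into $\mathcal{A}_\rho$): it places $x_m$ in $[0,1)$, writes $\int_{-1}^1=2\int_0^1$, and applies a plain rectangle bound over the interval of length $(\rho K)^{-2}$ adjacent to $x_m$, the hypothesis $K\ge 2/\rho$ serving only to ensure that this interval fits inside $[0,1]$; the factor $2$ gained from evenness is exactly what makes the computation close, since it yields $1+m\ge 2\rho^{-3}K^{-2}-\rho^{-3}K^{-2}=\rho^{-3}K^{-2}$, whereas the same rectangle bound without the factor $2$ gives only the vacuous $1+m\ge 0$. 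You never use evenness: you work on the longer side of $x_0$, rescale to the model integral $\int_0^S(1+s^{3/2})^{-1}\,\mathrm{d}s$, and let $K\ge 2/\rho$ bound the upper limit $S=\rho^2K^2 3^{-1/3}\ge 4\cdot 3^{-1/3}>2$ from below; your numerics do check out, since $3^{1/3}\left(\ln 2+(1+2\sqrt{2})^{-1}\right)\approx 1.38>1$, so the contradiction closes. What each buys: the paper's endgame is computation-free and transparent but is tied to the symmetry of the admissible class; yours establishes the slightly stronger statement that the bound holds for every $v\in\mathcal{K}^2$ with $\mathcal{E}_e(v)=\rho$, even or not, at the price of a numerically tight estimate --- as you observe, the margin between $\ln 2$ and $3^{-1/3}$ is razor-thin, which is why the extra piece $\int_1^2(1+s^{3/2})^{-1}\,\mathrm{d}s$, and hence the hypothesis $K\ge 2/\rho$, is genuinely needed in your route.
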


\begin{proof}
Thanks to the continuous embedding of $H_D^2(I)$ in $C^1([-1,1])$, the function $v$ reaches its minimum $m$ at some point $x_m\in [-1,1]$. Since $\mathcal{E}_e(v)=\rho > 2$ and $v\in\mathcal{K}^2$, we realize that $v\not\equiv 0$ and $m\in (-1,0)$ so that $x_m\in I$. Therefore, $\partial_x v(x_m) =0$ and we may assume that $x_m\in [0,1)$ since $v$ is even. Using Taylor's expansion and H\"older's inequality, we find, for $x\in I$,
\begin{align}
v(x) & = m - \int_{x_m}^x (y-x) \partial_x^2 v(y)\ \mathrm{d}y \le m + \frac{|x-x_m|^{3/2}}{\sqrt{3}} \|\partial_x^2 v\|_{L_2(I)} \nonumber \\
& \le m + K |x-x_m|^{3/2}\ . \label{b10}
\end{align}
Next, since $v\in\mathcal{A}_\rho$, we infer from Lemma~\ref{lea4} and \eqref{b10} that
\begin{equation}
\rho = \mathcal{E}_e(v) \ge \int_{-1}^1 \frac{\mathrm{d}x}{1+v(x)} = 2 \int_0^1 \frac{\mathrm{d}x}{1+v(x)} \ge 2 \int_0^1 \frac{\mathrm{d}x}{1+m + K |x-x_m|^{3/2}}\ . \label{b11}
\end{equation}
If $x_m\in [1/2,1)$, then $x_m-(\rho K)^{-2} >0$, and it follows from \eqref{b11} that
$$
\rho \ge 2 \int_{x_m - (\rho K)^{-2}}^{x_m} \frac{\mathrm{d}x}{1+m + K |x-x_m|^{3/2}} \ge \frac{2 (\rho K)^{-2}}{1+m + K (\rho K)^{-3}}\ ,
$$
hence $m\ge \rho^{-3} K^{-2} -1$ as claimed. If $x_m\in [0,1/2)$, then $x_m+(\rho K)^{-2} <1$, and we deduce from \eqref{b11} that 
$$
\rho \ge 2 \int_{x_m}^{x_m+ (\rho K)^{-2}} \frac{\mathrm{d}x}{1+m + K |x-x_m|^{3/2}} \ge \frac{2 (\rho K)^{-2}}{1+m + K (\rho K)^{-3}}\ ,
$$
and the same computation as in the previous case completes the proof. 
\end{proof}

\medskip

\begin{proof}[Proof of Proposition~\ref{thb1}]
Let $(u_k)_{k\ge 1}$ be a minimizing sequence of $\mathcal{E}_m$ in $\mathcal{A}_\rho$ satisfying 
\begin{equation}
\mu(\rho) \le \mathcal{E}_m(u_k) \le \frac{k+1}{k} \mu(\rho)\ . \label{b13}
\end{equation}
A first consequence of Proposition~\ref{leb3} and \eqref{b13} is that $\|\partial_x^2 u_k\|_{L_2(I)}^2 \le 4\mu_\infty /\beta$ for all $k\ge 1$. Together with Lemma~\ref{leb2} (with $K=(2/\rho)+2\sqrt{\mu_\infty /\beta}$) this property ensures 
\begin{equation}
0 \ge u_k(x) \ge \frac{\beta}{8\rho (\beta + \mu_\infty  \rho^2)} - 1\ , \quad x\in [-1,1]\ , \quad k\ge 1\ . \label{b14}
\end{equation}
Also, owing to \eqref{b12}, \eqref{b13}, and Poincar\'e's inequality, the sequence $(u_k)_{k\ge 1}$ is bounded in $H^2_D(I)$ and thus relatively compact in $C^1([-1,1])$. Consequently, there are $u\in H_D^2(I)$ and a subsequence of $(u_k)_{k\ge 1}$ (not relabeled) such that 
\begin{equation}
\begin{split}
u_k \longrightarrow u & \;\;\text{ in }\;\; C^1([-1,1])\ , \\
u_k \rightharpoonup u & \;\;\text{ in }\;\; H_D^2(I)\ .
\end{split} \label{b15}
\end{equation}
Combining \eqref{b14} and \eqref{b15} we conclude that 
\begin{equation*}
0 \ge u(x) \ge \frac{\beta}{8\rho (\beta +  \mu_\infty \rho^2)} - 1\ , \quad x\in [-1,1]\ ,
\end{equation*}
hence $u\in\mathcal{K}^2$. We then infer from Proposition~\ref{pra3} that
$$
\mathcal{E}_e(u) = \lim_{k\to\infty} \mathcal{E}_e(u_k) = \rho\ ,
$$
and so $u\in\mathcal{A}_\rho$. Since
$$
\mathcal{E}_m(u) \le \liminf_{k\to\infty} \mathcal{E}_m(u_k) \le \mu(\rho)
$$
by \eqref{b13} and \eqref{b15}, we deduce that $\mathcal{E}_m(u)=\mu(\rho)$ so that $u$ is a minimizer of $\mathcal{E}_m$ in $\mathcal{A}_\rho$. 
\end{proof}

\begin{theorem}\label{thb2}
Consider $\rho\in (2,\infty)$ and let $u\in \mathcal{A}_\rho$ be an arbitrary minimizer of $\mathcal{E}_m$ in $\mathcal{A}_\rho$. Then $u\in H_D^4(I)$ and there is $\lambda_u>0$ such that
\begin{equation}
\beta \partial_x^4 u(x) - \left(\tau + a \|\partial_x u\|_{L_2(I)}^2\right) \partial_x^2 u(x)  =  - \lambda_u \left( \varepsilon^2 |\partial_x \psi_u(x,u(x))|^2 + |\partial_z \psi_u(x,u(x))|^2 \right) \label{b4a} 
\end{equation}
for $x\in I$, where $\psi_u\in H^2(\Omega(u))$ denotes the associated solution to \eqref{psieq}-\eqref{psibc} given by Lemma~\ref{lea0} and Proposition~\ref{lea5}. Furthermore, 
\begin{equation}
0 < \lambda_u \le \frac{  8 \mu_\infty \left( \sqrt{\beta} + \varepsilon^2 \sqrt{\mu_\infty} \right)}{\sqrt{\beta} (\rho-2)^2}\ . \label{b20}
\end{equation}
\end{theorem}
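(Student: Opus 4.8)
The plan is to obtain \eqref{b4a} as the Euler--Lagrange equation of the constrained problem through a Lagrange multiplier, to bootstrap the regularity of $u$ by elliptic theory, and to read off the sign and bound on $\lambda_u$ by testing the equation with $u$ itself. By Proposition~\ref{pra3}, $\mathcal{E}_e\in C^1(S^2)$ with $\partial_u\mathcal{E}_e(u)=-g(u)$, while $\mathcal{E}_m$ is smooth on $H_D^2(I)$ with
\begin{equation*}
\langle \mathcal{E}_m'(u),v\rangle = \beta\int_I \partial_x^2 u\,\partial_x^2 v\,\mathrm{d}x + \Big(\tau + a\|\partial_x u\|_{L_2(I)}^2\Big)\int_I \partial_x u\,\partial_x v\,\mathrm{d}x .
\end{equation*}
I would work in the closed subspace of even functions, which all the data respect (so $g(u)$ is even whenever $u$ is). First, $g(u)$ does not vanish identically: as $\psi_u$ is non-constant, equals $1$ on the upper boundary and satisfies $\psi_u<1$ inside by \eqref{a100} and the strong maximum principle, Hopf's lemma forces the outward conormal derivative, hence $g(u)$, to be positive along the upper boundary. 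Thus $\partial_u\mathcal{E}_e(u)\ne 0$, $\{\mathcal{E}_e=\rho\}$ is a $C^1$-manifold near $u$, and the classical multiplier argument (pick an even $w$ supported in $\{u<0\}$ with $\int_I g(u)\,w\,\mathrm{d}x\ne 0$ and restore the constraint by the implicit function theorem) yields $\lambda_u\in\mathbb{R}$ with
\begin{equation*}
\beta\int_I \partial_x^2 u\,\partial_x^2 v\,\mathrm{d}x + \Big(\tau + a\|\partial_x u\|_{L_2(I)}^2\Big)\int_I \partial_x u\,\partial_x v\,\mathrm{d}x = -\lambda_u\int_I g(u)\,v\,\mathrm{d}x
\end{equation*}
for every even $v\in H_D^2(I)$ that vanishes on the contact set $\mathcal{C}:=\{u=0\}\cap I$; the lower constraint $u>-1$ is inactive by the a priori bound from Lemma~\ref{leb2} established in Proposition~\ref{thb1}.

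Since $u=0$ on $\mathcal{C}$, the choice $v=u$ is admissible and gives
\begin{equation*}
\beta\|\partial_x^2 u\|_{L_2(I)}^2 + \Big(\tau + a\|\partial_x u\|_{L_2(I)}^2\Big)\|\partial_x u\|_{L_2(I)}^2 = \lambda_u\int_I g(u)\,(-u)\,\mathrm{d}x .
\end{equation*}
The left-hand side is strictly positive because $u\not\equiv 0$ (as $\rho>2$), while $g(u)\ge0$ and $-u\ge0$; hence $\int_I g(u)(-u)\,\mathrm{d}x>0$ and $\lambda_u>0$. The weak equation holds in $\mathcal{D}'(I\setminus\mathcal{C})$ with right-hand side $-\lambda_u g(u)\in H^\sigma(I)\hookrightarrow L_2(I)$ for $\sigma\in[0,1/2)$ by Proposition~\ref{lea5g}. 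Since $\beta\partial_x^4-(\tau+a\|\partial_x u\|_{L_2(I)}^2)\partial_x^2$ is an isomorphism from $H_D^4(I)$ onto $L_2(I)$, I would now invoke the positivity-preserving (Boggio-type) property underlying Lemma~\ref{lea4.5}(i): the clamped problem with the non-positive, not identically zero right-hand side $-\lambda_u g(u)$ has a strictly negative solution. This forces $u<0$ in $I$, so $\mathcal{C}=\emptyset$, the identity above then holds for all even $v\in H_D^2(I)$, elliptic regularity gives $u\in H_D^4(I)$ together with the pointwise equation \eqref{b4a}, and Proposition~\ref{lea5} with $\alpha=0$ yields $\psi_u\in H^2(\Omega(u))$.

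For \eqref{b20} I would start again from the tested identity. Writing its left-hand side as $2\mathcal{E}_m(u)+\tfrac a2\|\partial_x u\|_{L_2(I)}^4$ and using $\tfrac a4\|\partial_x u\|_{L_2(I)}^4\le\mathcal{E}_m(u)=\mu(\rho)\le\mu_\infty$ (by Proposition~\ref{leb3}), the left-hand side is at most $4\mu_\infty$, whence
\begin{equation*}
0<\lambda_u\le \frac{4\mu_\infty}{\int_I g(u)\,(-u)\,\mathrm{d}x}.
\end{equation*}

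The remaining and, I expect, hardest point is the matching lower bound $\int_I g(u)(-u)\,\mathrm{d}x\ge \tfrac{\sqrt\beta}{2(\sqrt\beta+\varepsilon^2\sqrt{\mu_\infty})}(\rho-2)^2$, which then produces \eqref{b20}. I would derive it from Lemma~\ref{lea4}: its upper bound gives
\begin{equation*}
\rho-2\le \int_I \frac{-u}{1+u}\,\mathrm{d}x + \varepsilon^2\int_I \frac{|\partial_x u|^2}{1+u}\,\mathrm{d}x ,
\end{equation*}
and I would estimate each term on the right against $\big(\int_I g(u)(-u)\,\mathrm{d}x\big)^{1/2}$ by the Cauchy--Schwarz inequality, controlling the leftover factors by $\|u\|_{L_\infty(I)}\le1$ and by the a priori bounds $\|\partial_x^2 u\|_{L_2(I)}^2\le 2\mu_\infty/\beta$ and $\|\partial_x u\|_{L_\infty(I)}\le\sqrt2\,\|\partial_x^2 u\|_{L_2(I)}$ (the latter because $\partial_x u(\pm1)=0$). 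The delicate feature is that the weight $-u$ degenerates near the clamped endpoints, so a naive pairing is singular; it is precisely this degeneracy that upgrades the linear information $\rho-2$ into the quadratic lower bound $(\rho-2)^2$ and fixes the constant $\sqrt\beta+\varepsilon^2\sqrt{\mu_\infty}$. Making this estimate rigorous --- possibly through an auxiliary pointwise comparison for $\partial_z\psi_u$ on the upper boundary, or a boundary flux identity for $\psi_u$ obtained by integrating its equation against a well-chosen multiplier --- is the crux of the argument.
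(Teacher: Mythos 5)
Your overall strategy for the first part (Lagrange multiplier, testing with $v=u$ to get $\lambda_u>0$, elliptic regularity) matches the paper's, but the second half of the statement --- the quantitative bound \eqref{b20} --- is not actually proved in your proposal, and this is precisely the hard core of the theorem. You correctly reduce \eqref{b20} to a lower bound of the form $\int_I g(u)(-u)\,\mathrm{d}x \gtrsim (\rho-2)^2$, but then concede that you cannot establish it and sketch a route via Lemma~\ref{lea4} and Cauchy--Schwarz that does not close: pairing $\int_I \frac{-u}{1+u}\,\mathrm{d}x$ against $\bigl(\int_I g(u)(-u)\,\mathrm{d}x\bigr)^{1/2}$ by Cauchy--Schwarz inevitably produces a leftover factor involving a \emph{lower} bound on $\partial_z\psi_u(x,u(x))$, which is not available (and cannot be obtained uniformly near the clamped corners). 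The paper's proof instead hinges on an exact flux identity: multiplying \eqref{psieq} by the two multipliers $(1+u)\psi_u-(1+z)$ and $u\,\psi_u$, integrating by parts over $\Omega(u)$, and using \eqref{a101}, one obtains
\begin{equation*}
-\int_{-1}^1 u(x)\left(1+\varepsilon^2|\partial_x u(x)|^2\right)\partial_z\psi_u(x,u(x))\,\mathrm{d}x=\mathcal{E}_e(u)-2 ,
\end{equation*}
i.e.\ \eqref{b19}. Since \eqref{a101} also gives $g(u)=(1+\varepsilon^2|\partial_x u|^2)\,|\partial_z\psi_u(\cdot,u)|^2$, Jensen's (Cauchy--Schwarz) inequality with weight $|u|(1+\varepsilon^2|\partial_x u|^2)$ converts this \emph{linear} identity in $\partial_z\psi_u$ into the quadratic lower bound $\int_I g(u)(-u)\,\mathrm{d}x\ge (\rho-2)^2/(2+\varepsilon^2\|\partial_x u\|_{L_2(I)}^2)$, and the elementary bound $\|\partial_x u\|_{L_2(I)}^2\le 2\sqrt{\mu(\rho)/\beta}$ then yields \eqref{b20}. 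You gestured at exactly this possibility (``a boundary flux identity for $\psi_u$ obtained by integrating its equation against a well-chosen multiplier''), but finding the multipliers and carrying out the computation is the decisive step, and it is missing.

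A secondary gap lies in your treatment of the obstacle constraint $u\le 0$. You derive the Euler--Lagrange relation only for test functions vanishing on the contact set $\mathcal{C}=\{u=0\}\cap I$, so the equation for $u$ holds a priori only in $\mathcal{D}'(I\setminus\mathcal{C})$; on all of $I$ the correct statement is $\beta\partial_x^4u-(\tau+a\|\partial_x u\|_{L_2(I)}^2)\partial_x^2 u=-\lambda_u g(u)-\mu$ with $\mu$ a non-negative measure supported on $\mathcal{C}$. Your Boggio argument identifies $u$ with the solution of the clamped problem with right-hand side $-\lambda_u g(u)$ alone, which is not justified at that stage; the argument can be repaired (the measure has the favorable sign, so the Green function representation still forces $u<0$ in $I$, whence $\mu=0$), but as written there is a logical jump. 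The paper sidesteps this entirely by invoking the cited Lagrange multiplier result of Zeidler, using the non-negativity of $g(u)$, to get \eqref{b16} for \emph{all} $\vartheta\in H_D^2(I)$ directly.
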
 

\begin{proof}
Let $u\in \mathcal{A}_\rho\subset\mathcal{K}^2$ be a minimizer of $\mathcal{E}_m$. Recall from Proposition~\ref{pra3}
that the derivative of $\mathcal{E}_e$
is given by
$$
\langle \partial_u\mathcal{E}_e(u) , \vartheta \rangle = - \int_{-1}^1 g(u) \vartheta\ \mathrm{d}x\ ,\quad \vartheta\in H^2_D(I)\ ,
$$
with $g(u)\in L_2(I)$ while clearly
$$
\langle \partial_u\mathcal{E}_m(u) , \vartheta \rangle = \int_{-1}^1 \left(\beta\partial_x^2 u\ \partial_x^2\vartheta + \big(\tau+a \|\partial_x u\|_{L_2(I)}^2 \big) \partial_x u\ \partial_x\vartheta \right)\ \mathrm{d}x\ ,\quad \vartheta\in H^2_D(I)\ .
$$
Since $u$ solves \eqref{b3} and $g(u)$ is non-negative, \cite[4.14.Proposition  1] {Ze95} implies that there is a Lagrange multiplier $\lambda_u\in\mathbb{R}$ such that 
\begin{equation}
\langle \partial_u\mathcal{E}_m(u) , \vartheta \rangle = \lambda_u \langle \partial_u\mathcal{E}_e(u) , \vartheta \rangle\ , \quad \vartheta\in H^2_D(I)\ .\label{b16}
\end{equation}
We may then combine \eqref{b16} and classical elliptic regularity to conclude that $u\in H_D^4(I)$ solves \eqref{b4a} in a strong sense. In addition, taking $\vartheta=u$ in \eqref{b16} gives
\begin{equation}
\beta \|\partial_x^2 u \|_{L_2(I)}^2 + \tau \|\partial_x u \|_{L_2(I)}^2 + a \|\partial_x u \|_{L_2(I)}^4  = - \lambda_u \int_{-1}^1 u g(u)\ \mathrm{d}x\ , \label{b200}
\end{equation}
hence $\lambda_u>0$ since $g(u)$ is non-negative and $u$ is non-positive and different from zero.

We are left with the upper bound \eqref{b20} on $\lambda_u$. On the one hand, multiplying \eqref{psieq} by $(1+u) \psi_u - (1+z)$, integrating over $\Omega(u)$, and using
$$
(1+u(x)) \psi_u(x,z) - (1+z) = 0\ , \quad (x,z)\in \partial\Omega(u)\ , 
$$
we obtain from Green's formula that
\begin{align*}
0 & = \int_{\Omega(u)} \left[ \varepsilon^2 \partial_x\psi_u \partial_x \left( (1+u) \psi_u \right) + \partial_z \psi_u \left( (1+u) \partial_z \psi_u - 1 \right) \right]\, \mathrm{d}(x,z) \\
& = \int_{\Omega(u)} \left[ (1+u) \left( \varepsilon^2 |\partial_x \psi_u|^2 + |\partial_z \psi_u|^2 \right) + \varepsilon^2 \psi_u \partial_x \psi_u\ \partial_x u \right]\, \mathrm{d}(x,z) - 2\ ,
\end{align*}
whence
\begin{equation}
\int_{\Omega(u)} \left[ u \left( \varepsilon^2 |\partial_x \psi_u|^2 + |\partial_z \psi_u|^2 \right) + \varepsilon^2 \psi_u \partial_x \psi_u\ \partial_x u \right]\, \mathrm{d}(x,z) = 2 - \mathcal{E}_e(u)\ . \label{b17}
\end{equation}
On the other hand, we multiply \eqref{psieq} by $u \psi_u$ and integrate over $\Omega(u)$. Using again Green's formula along with the values of $u$ and $\psi_u$ on the boundary of $\Omega(u)$, we find
\begin{align*}
0 = & - \int_{\Omega(u)} \left[ \varepsilon^2 \partial_x\psi_u \partial_x \left( u \psi_u \right) + \partial_z \psi_u \left( u \partial_z \psi_u \right) \right]\, \mathrm{d}(x,z) \\
& - \varepsilon^2 \int_{-1}^1 u(x) \partial_x u(x)\ \partial_x \psi_u(x,u(x))\, \mathrm{d}x + \int_{-1}^1 u(x) \partial_z \psi_u(x,u(x))\, \mathrm{d}x \\
= & - \int_{\Omega(u)} \left[ u \left( \varepsilon^2 |\partial_x\psi_u|^2 + |\partial_z \psi_u|^2 \right) + \varepsilon^2 \psi_u \partial_x \psi_u \, \partial_x u) \right]\, \mathrm{d}(x,z) \\
& + \int_{-1}^1 u(x) \left[ \partial_z \psi_u(x,u(x)) -  \varepsilon^2 \partial_x u(x)\, \partial_x \psi_u(x,u(x)) \right]\, \mathrm{d}x\ .
\end{align*}
Combining \eqref{b17} with the above identity and \eqref{a101} we end up with
\begin{equation}
- \int_{-1}^1 u(x) \left( 1 + \varepsilon^2 |\partial_x u(x)|^2 \right)\ \partial_z \psi_u(x,u(x))\ \mathrm{d}x = \mathcal{E}_e(u) - 2\ . \label{b19}
\end{equation} 
Now it follows from \eqref{b3}, \eqref{b200}, \eqref{b19}, Jensen's inequality, the bounds $-1<u\le 0$, and the non-negativity \eqref{a102} of $x\mapsto \partial_z \psi_u(x,u(x))$ that
\begin{align*}
4 \mu(\rho) &  \ge \beta \|\partial_x^2 u \|_{L_2(I)}^2 + \tau \|\partial_x u \|_{L_2(I)}^2 + a \|\partial_x u \|_{L_2(I)}^4  \\
& = -\lambda_u \int_{-1}^1 u(x) \left( 1 + \varepsilon^2 |\partial_x u(x)|^2 \right)\ |\partial_z \psi_u(x,u(x))|^2\ \mathrm{d}x \\
& \ge \lambda_u\ \frac{\displaystyle{\left( \int_{-1}^1 |u(x)| \left( 1 + \varepsilon^2 |\partial_x u(x)|^2 \right)\ \partial_z \psi_u(x,u(x))\ \mathrm{d}x \right)^2}}{\displaystyle{\int_{-1}^1 |u(x)| \left( 1 + \varepsilon^2 |\partial_x u(x)|^2 \right)\ \mathrm{d}x}} \\
& \ge \lambda_u\ \frac{(\mathcal{E}_e(u)-2)^2}{2+\varepsilon^2 \|\partial_x u\|_{L_2(I)}^2}\ .
\end{align*}
We finally observe that $\mathcal{E}_e(u)=\rho$ as $u\in\mathcal{A}_\rho$ while
$$
\|\partial_x u\|_{L_2(I)}^2 = - \int_{-1}^1 u\ \partial_x^2 u\ \mathrm{d}x \le \int_{-1}^1 |\partial_x^2 u|\ \mathrm{d}x \le \sqrt{2} \|\partial_x^2 u\|_{L_2(I)} \le 2\sqrt{\frac{\mu(\rho)}{\beta}}\ ,
$$
since $u\in \mathcal{K}^2$ solves \eqref{b3}. Therefore,
$$
4 \mu(\rho) \ge \lambda_u\ \frac{\sqrt{\beta} (\rho-2)^2}{2\left( \sqrt{\beta} +\varepsilon^2 \sqrt{\mu(\rho)} \right)}\ ,
$$
which gives \eqref{b20} after using Proposition~\ref{leb3}.
\end{proof}

\begin{proof}[Proof of Theorem~\ref{T1}] Clearly, Proposition~\ref{thb1} and Theorem~\ref{thb2} imply that for each $\rho>2$ there are $\lambda_\rho>0$, $u_\rho\in H_D^4(I)$, and $\psi_\rho\in H^2(\Omega(u_\rho))$ such that $(u_\rho,\psi_\rho)$ is a solution to \eqref{ueq}-\eqref{psibc} with $\lambda=\lambda_ \rho$.  We recall that  $\lambda\mapsto (\lambda,U_\lambda)$ defines a smooth curve  in $\R\times H^4(I)$ starting at $(0,0)$ according to Theorem~\ref{0} so that $\mathcal{E}_e(U_\lambda)\rightarrow 2$ as $\lambda\rightarrow 0$ due to Proposition~\ref{pra3}. Consequently, since $\mathcal{E}_e(u_\rho)=\rho$ and $\lambda_\rho\to 0$ as $\rho\to\infty$, we realize that $u_\rho\not= U_{\lambda_\rho}$ for large~$\rho$. Finally, since $u_\rho$ is even and uniquely determines $\psi_\rho$, it readily follows that $\psi_\rho=\psi_\rho(x,z)$ is even with respect to $x\in I$.
\end{proof}

\section{Regularity of solutions to \eqref{psieq}-\eqref{psibc}}\label{sec: 4}

In this section we provide the technical proofs of Proposition~\ref{lea5} and Proposition~\ref{lea5g} that were postponed. That is, we shall improve the regularity of the weak solution $\psi_u$ to \eqref{psieq}-\eqref{psibc} given in Lemma~\ref{lea0} for smoother deflection $u$ and prove continuity properties of the function $g$ defined in \eqref{gg}. In order to do so we introduce the transformation
$$
T_u(x,z):=\left(x,\frac{1+z}{1+u(x)}\right)\ ,\quad (x,z)\in {\Omega(u)}\ ,
$$
mapping $\Omega(u)$ onto the fixed rectangle \mbox{$\Omega:=I\times (0,1)$}. We then transform the elliptic problem \eqref{psieq}-\eqref{psibc} for $\psi_u$ in the variables $(x,z)\in\Omega(u)$ to the elliptic problem
\begin{equation}
\mathcal{L}_u \Phi_u = f_u \;\;\text{ in }\;\; \Omega\ , \qquad \Phi_u=0 \;\;\text{ on }\;\; \partial\Omega\ , \label{g2}
\end{equation}
for  $\Phi_u(x,\eta)=\psi_u\circ T_u^{-1}(x,\eta)-\eta$ in the variables $(x,\eta)=T_u(x,z)\in \Omega$, where the operator $\mathcal{L}_u$ is given by
\begin{equation}
\begin{split}
\mathcal{L}_u w\, :=\, & \e^2\ \partial_x^2 w - 2\e^2\ \eta\ \frac{\partial_x u(x)}{1+u(x)}\ \partial_x\partial_\eta w
+ \frac{1+\e^2\eta^2(\partial_x u(x))^2}{(1+u(x))^2}\ \partial_\eta^2 w\\
& + \e^2\ \eta\ \left[ 2\ \left(\frac{\partial_x u(x)}{1+u(x)} \right)^2 - \frac{\partial_x^2 u(x)}{1+u(x)} \right]\ \partial_\eta w
\end{split} \label{acdc}
\end{equation}
and the right-hand side $f_u$ is given by
\begin{equation}\label{ff}
f_u(x,\eta) := \varepsilon^2 \eta \left[ \partial_x \left(\frac{\partial_x u(x)}{1+u(x)}\right) - \left(\frac{\partial_x u(x)}{1+u(x)}\right)^2 \right]\ , \quad (x,\eta)\in \Omega\ .
\end{equation}
The goal is then to obtain uniform estimates for $\Phi_u$ in the anisotropic space
\begin{equation*}
X(\Omega) := \{ w \in H^1(\Omega)\ ;\ \partial_\eta w \in H^1(\Omega)\} 
\end{equation*}
in dependence of deflections $u$ belonging to certain open subsets
$$
S_p^{s}(\kappa) := \left\{ u \in W_{p,D}^{s}(I)\ ; \ u > -1+ \kappa \;\text{ in }\; I \;\text{ and }\; \|u\|_{W_{p,D}^s(I)} < \frac{1}{\kappa} \right\}
$$
of $W_{p,D}^{s}(I)$, where $p\ge 2$, $s>1/p$, and $\kappa\in (0,1)$. Note that the closure of $S_p^s(\kappa)$ in $W_{p,D}^s(I)$ is 
$$
\overline{S}_p^{s}(\kappa) = \left\{ u \in W_{p,D}^{s}(I)\ ; \ u \ge -1+ \kappa \;\text{ in }\; I \;\text{ and }\; \|u\|_{W_{p,D}^s(I)} \le \frac{1}{\kappa} \right\}
$$
and  $S^s= \cup_{\kappa\in (0,1)} S_2^s(\kappa)$. More precisely, we shall prove the following result regarding the problem \eqref{g2}:

\begin{proposition}\label{gth1}
Let $\alpha\in [0,1/2)$, $\nu\in (\alpha,1/2)$, $\kappa\in (0,1)$, and $u\in \overline{S}_2^{2-\alpha}(\kappa)$. There is a unique solution $\Phi_u\in X(\Omega)\cap H^{2-\nu}(\Omega)$ to \eqref{g2}
which satisfies
\begin{equation}
\|\Phi_u\|_{X(\Omega)} + \|\Phi_u\|_{H^{2-\nu}(\Omega)} \le c_1(\kappa) \label{g3} 
\end{equation}
for some positive constant $c_1(\kappa)$ depending only on $\varepsilon$, $\alpha$, $\nu$, and $\kappa$. In addition, the distribution $q_u$, defined for $\vartheta\in C_0^\infty(\Omega)$ by
\begin{eqnarray}
\langle q_u , \vartheta \rangle & := & - \int_\Omega \left[ \partial_x  \Phi_u(x,\eta) - \eta U(x) \partial_\eta \Phi_u(x,\eta) \right] \partial_x\vartheta(x,\eta)\ \rd(x,\eta) \nonumber \\
&&  \ + \int_\Omega \eta U(x) \partial_x\partial_\eta \Phi_u(x,\eta) \vartheta(x,\eta)\ \rd(x,\eta)\label{g1a}
\end{eqnarray}
with $U := \partial_x\ln{(1+u)}$, belongs to the dual space $H^{-\alpha}(\Omega)$ of $H^{\alpha}(\Omega)$, and there is $c_2(\kappa)$ depending only on $\varepsilon$, $\alpha$, and $\kappa$ such that 
\begin{equation}
\|q_u\|_{H^{-\alpha}(\Omega)} \le c_2(\kappa)\ . \label{g3a}
\end{equation}
Furthermore, if $(u_n)_{n\ge 1}$ is a sequence in $\overline{S}_2^{2-\alpha}(\kappa)$ converging weakly in $H^{2-\alpha}(I)$ toward $u\in \overline{S}_2^{2-\alpha}(\kappa)$, then
\begin{equation}
\Phi_{u_n} \rightharpoonup \Phi_u \;\;\text{ in }\;\; X(\Omega)\cap H^{2-\nu}(\Omega) \label{g4}
\end{equation}
and $(\Phi_{u_n})_{n\ge 1}$ converges strongly to $\Phi_u$ in $H^1(\Omega)$.
\end{proposition}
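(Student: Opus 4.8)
The plan is to recast \eqref{g2} in divergence form and read off its structure from the change of variables. Writing $\phi := \Phi_u + \eta$, the anisotropic harmonicity of $\psi_u$ transported by $T_u$ becomes $\mathrm{div}_{(x,\eta)}(A_u\nabla\phi)=0$ in $\Omega$ with $\phi=\eta$ on $\partial\Omega$, where the symmetric matrix
\[
A_u := (1+u)\begin{pmatrix} \varepsilon^2 & -\varepsilon^2\eta U \\ -\varepsilon^2\eta U & \varepsilon^2\eta^2 U^2 + (1+u)^{-2} \end{pmatrix}, \qquad U := \frac{\partial_x u}{1+u},
\]
has constant determinant $\varepsilon^2$. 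Since $u\in\overline{S}_2^{2-\alpha}(\kappa)$ with $\alpha<1/2$, the embedding $H^{1-\alpha}(I)\hookrightarrow C([-1,1])$ and the lower bound $1+u\ge\kappa$ show that $U$ is bounded in terms of $\kappa$ only, so $A_u$ is uniformly elliptic and bounded on $\overline{S}_2^{2-\alpha}(\kappa)$. Consequently $\Phi_u=\phi-\eta$ solves $\mathrm{div}(A_u\nabla\Phi_u)=-\mathrm{div}(A_u\nabla\eta)=:F_u\in H^{-1}(\Omega)$ with homogeneous Dirichlet data, and the Lax--Milgram theorem delivers a unique $\Phi_u\in H^1_D(\Omega)$ together with $\|\Phi_u\|_{H^1(\Omega)}\le c(\kappa)$.

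To upgrade this to $\Phi_u\in X(\Omega)$ I would first take $u$ smooth, so that $\Phi_u$ is classical and every manipulation below is licit, derive $\kappa$-uniform bounds, and recover the general $u\in\overline{S}_2^{2-\alpha}(\kappa)$ by approximating strongly in $H^{2-\alpha}(I)$ and invoking the concluding convergence statement. The central a priori estimate controls $\partial_\eta^2\Phi_u$ and $\partial_x\partial_\eta\Phi_u$ in $L_2(\Omega)$; I would obtain it by differentiating $\mathrm{div}(A_u\nabla\Phi_u)=F_u$ in $\eta$ and performing an energy estimate on $v=\partial_\eta\Phi_u$, exploiting the product geometry of $\Omega=I\times(0,1)$: on the lateral edges $\{x=\pm1\}$ one has $v=0$, while on $\{\eta=0,1\}$ the tangential derivative $\partial_x\Phi_u$ vanishes, which is precisely what is needed to control the boundary contributions and conclude $\|\Phi_u\|_{X(\Omega)}\le c_1(\kappa)$. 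For $q_u$, rewriting the non-divergence form \eqref{acdc}, using $\partial_x U=\partial_x^2 u/(1+u)-U^2$ and the identity that $f_u$ equals the negative of the coefficient of $\partial_\eta\Phi_u$ in $\mathcal{L}_u$, reveals the crucial cancellation: the distributional coefficient $\partial_x^2 u$ drops out of all $\partial_\eta\Phi_u$ contributions and survives only through $f_u$, so that
\[
\varepsilon^2 q_u = f_u + 2\varepsilon^2\eta U\,\partial_x\partial_\eta\Phi_u - \frac{1+\varepsilon^2\eta^2(\partial_x u)^2}{(1+u)^2}\,\partial_\eta^2\Phi_u - \varepsilon^2\eta U^2\,\partial_\eta\Phi_u.
\]
Every term on the right but $f_u$ lies in $L_2(\Omega)$ with norm $\le c_1(\kappa)$, while $f_u\in H^{-\alpha}(\Omega)$ because $\partial_x^2 u\in H^{-\alpha}(I)$ with $\|\partial_x^2 u\|_{H^{-\alpha}(I)}\le\|u\|_{H^{2-\alpha}(I)}\le\kappa^{-1}$ and $(1+u)^{-1}$ is a multiplier on $H^{-\alpha}(\Omega)$ for $\alpha<1/2$; this yields \eqref{g3a}.

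The bound \eqref{g3a} then gives $\partial_x^2\Phi_u=q_u+\eta\,\partial_x U\,\partial_\eta\Phi_u\in H^{-\alpha}(\Omega)$, the product of the $H^{-\alpha}(I)$ distribution $\partial_x U$ with $\partial_\eta\Phi_u\in H^1(\Omega)$ being controlled in $H^{-\alpha}(\Omega)$ by a product estimate. Combined with $\partial_\eta^2\Phi_u,\partial_x\partial_\eta\Phi_u\in L_2(\Omega)$ and $\Phi_u\in H^1(\Omega)$, anisotropic interpolation produces $\Phi_u\in H^{2-\nu}(\Omega)$ for every $\nu\in(\alpha,1/2)$, the strict loss $\nu>\alpha$ absorbing the multiplier estimates. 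Finally, for \eqref{g4} the uniform bounds \eqref{g3}--\eqref{g3a} let me extract a weak limit of $(\Phi_{u_n})$ in $X(\Omega)\cap H^{2-\nu}(\Omega)$; since $u_n\rightharpoonup u$ in $H^{2-\alpha}(I)$ forces $u_n\to u$ in $C^1([-1,1])$ by compactness, the coefficients $A_{u_n}$ converge uniformly, so one may pass to the limit in the weak formulation and identify the limit with $\Phi_u$ by uniqueness. Strong convergence in $H^1(\Omega)$ follows from coercivity once $\langle F_{u_n}-F_u,\Phi_{u_n}-\Phi_u\rangle\to0$, which uses that $\Phi_{u_n}-\Phi_u\to0$ strongly in $H^\alpha(\Omega)$ (interpolating the weak $H^{2-\nu}$ bound against strong $L_2$ convergence) while the data stay bounded in $H^{-\alpha}(\Omega)$.

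The main obstacle is the pair of a priori estimates placing $\Phi_u$ in $X(\Omega)$ and bounding $q_u$ in $H^{-\alpha}(\Omega)$: both must be carried out in the presence of the genuinely distributional coefficient $\partial_x^2 u$, so that every formal identity is justified only after smooth approximation, and both hinge on the structural cancellation confining $\partial_x^2 u$ to a single $H^{-\alpha}$-controlled term. The delicate points are the treatment of the boundary contributions on $\{\eta=0,1\}$ in the anisotropic $\eta$-energy estimate—handled through the vanishing of $\partial_x\Phi_u$ there—and the verification of the multiplier and product estimates on $H^{\pm\alpha}(\Omega)$, which is exactly what dictates the loss from $\alpha$ to $\nu$.
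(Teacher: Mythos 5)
Your skeleton runs parallel to the paper's actual proof: a Lax--Milgram $H^1$ theory with $\kappa$-uniform bounds, smooth approximation of $u$, an anisotropic energy estimate controlling $\partial_\eta^2\Phi_u$ and $\partial_x\partial_\eta\Phi_u$ in $L_2(\Omega)$, recovery of $\partial_x^2\Phi_u$ from the equation using the cancellation that confines $\partial_x^2 u$ to the source term (your identity for $\varepsilon^2 q_u$ is the paper's), and weak compactness plus uniqueness for the continuity statement. But the central step --- the energy estimate --- has a genuine gap. If you differentiate the equation in $\eta$ and test with $v=\partial_\eta\Phi_u$, then $v$ does \emph{not} vanish on the horizontal edges $\{\eta=0\}$ and $\{\eta=1\}$, and the integration by parts produces boundary integrals of the form $\int_{-1}^1 \left[(A_u\nabla v)\cdot e_\eta\right] v\,\rd x$ there; these involve the traces of $\partial_\eta^2\Phi_u$ and $\partial_x\partial_\eta\Phi_u$ on the horizontal edges, which are controlled by nothing at your disposal (equivalently, $v$ is not an admissible test function for the differentiated problem, since the conormal data of $v$ on those edges are unknown). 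The vanishing of the tangential derivative $\partial_x\Phi_u$ there does not kill these terms. The paper's way around this is different: it tests the original equation with $\zeta=\partial_\eta^2\Phi_u$ and invokes Grisvard's identity $\int_\Omega\partial_x^2\Phi\,\partial_\eta^2\Phi\,\rd(x,\eta)=\int_\Omega(\partial_x\partial_\eta\Phi)^2\,\rd(x,\eta)$, valid on the rectangle for $H^2\cap H^1_D$ functions, which produces no boundary terms; the one surviving boundary term, $\int_{-1}^1(\partial_x U-U^2)\,\gamma^2\,\rd x$ with $\gamma=\partial_\eta\Phi_u(\cdot,1)$, is precisely where the difficulty concentrates because $\partial_x U$ is only in $H^{-\alpha}(I)$, and it is estimated via the duality $H^{-\alpha}(I)\times H^{\alpha}(I)$, the multiplication estimate $\|\gamma^2\|_{H^\alpha(I)}\le c\|\gamma\|_{H^{1/2}(I)}\|\gamma\|_{H^\nu(I)}$, the trace theorem and interpolation, and finally absorbed by Young's inequality. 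Your proposal never confronts this term, yet it is the technical heart of the proposition.

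A second, related omission concerns the source term when $\alpha\in(0,1/2)$: then $f_u\notin L_2(\Omega)$ (it carries $\partial_x^2 u\in H^{-\alpha}(I)$), so the pairing of $f_u$ with $\partial_\eta^2\Phi_u$ in the energy estimate can be handled neither by Cauchy--Schwarz nor by $H^{-\alpha}\times H^{\alpha}$ duality, since $\partial_\eta^2\Phi_u$ is only being controlled in $L_2$. This is why the paper proves a separate lemma for sources of the form $h=\partial_x h_1(x)\,h_2(\eta)$ with $h_1\in H^{1-\alpha_1}(I)$, moving the $x$-derivative off $h_1$ by integrations by parts in $x$ and $\eta$, at the price of further boundary terms $\int_{-1}^1\partial_x h_1\,\partial_\eta\Phi(\cdot,i)\,\rd x$, $i=0,1$, again handled by duality and trace estimates. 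Finally, your claim that $\eta\,\partial_x U\,\partial_\eta\Phi_u$ lies in $H^{-\alpha}(\Omega)$ is too strong: the available product estimate only yields $H^{-\nu}(\Omega)$ for $\nu>\alpha$, which is exactly what forces the conclusion $\Phi_u\in H^{2-\nu}(\Omega)$ rather than $H^{2-\alpha}(\Omega)$; your following sentence implicitly concedes the loss, but as written the two statements are inconsistent.
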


The proof of Proposition~\ref{gth1} requires several steps which will be given in the next subsection, the actual proof  of Proposition~\ref{gth1} being contained in Subsection~\ref{subsec:42}. From Proposition~\ref{gth1} we may in particular derive more regularity for the solution $\psi_u$ to \eqref{psieq}-\eqref{psibc} and the continuity of the function $g$ defined in \eqref{gg} as stated in the next corollary.

\begin{corollary}\label{gth0}
Given $\alpha\in [0,1/2)$ and $u\in S^{2-\alpha}$, the corresponding solution $\psi_u$ to \eqref{psieq}-\eqref{psibc} belongs to $H^{2-\alpha}(\Omega(u))$. In addition, $g\in C(S^{2-\alpha}, H^\sigma(I))$ for all $\sigma\in [0,1/2)$.
\end{corollary}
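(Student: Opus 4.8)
The plan is to read off both assertions from the analysis of the flattened problem \eqref{g2} contained in Proposition~\ref{gth1} and to transport the information back to $\Omega(u)$ through the diffeomorphism $T_u$. Since $S^{2-\alpha}=\cup_{\kappa\in(0,1)}S_2^{2-\alpha}(\kappa)$, any given $u\in S^{2-\alpha}$ belongs to some $\overline{S}_2^{2-\alpha}(\kappa)$, so Proposition~\ref{gth1} applies and provides $\Phi_u\in X(\Omega)\cap H^{2-\nu}(\Omega)$ for $\nu\in(\alpha,1/2)$ together with the sharp bound $\|q_u\|_{H^{-\alpha}(\Omega)}\le c_2(\kappa)$. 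Recalling $\psi_u\circ T_u^{-1}(x,\eta)=\Phi_u(x,\eta)+\eta$ and that $T_u^{-1}(x,\eta)=(x,(1+u(x))\eta-1)$ is, thanks to $H^{2-\alpha}(I)\hookrightarrow C^1([-1,1])$ (as $2-\alpha>3/2$), a $C^1$-diffeomorphism of $\Omega$ onto $\Omega(u)$, I would first record
\begin{equation*}
\partial_z\psi_u=\frac{1+\partial_\eta\Phi_u}{1+u}\ ,\qquad \partial_x\psi_u=\partial_x\Phi_u-\eta U\,(1+\partial_\eta\Phi_u)\ ,
\end{equation*}
with $U=\partial_x\ln(1+u)\in H^{1-\alpha}(I)$, the right-hand sides being evaluated at $(x,\eta)=T_u(x,z)$.

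For the regularity $\psi_u\in H^{2-\alpha}(\Omega(u))$, the decisive point is that the two directions carry different regularity and that the \emph{isotropic} bound \eqref{g3} on $\Phi_u$ (only $H^{2-\nu}$) is not enough: one must exploit \eqref{g3a}. The vertical direction is harmless, since $\partial_\eta\Phi_u\in H^1(\Omega)$ coming from $X(\Omega)$ makes $\partial_z\psi_u\in H^1$, hence in $H^{1-\alpha}$. For the horizontal term I would observe that $\partial_\eta(\partial_x\Phi_u)=\partial_x\partial_\eta\Phi_u\in L_2(\Omega)$, again from $X(\Omega)$, while the distributional identity $q_u=\partial_x^2\Phi_u-\eta\,(\partial_xU)\,\partial_\eta\Phi_u$, obtained directly from \eqref{g1a}, shows that the estimate $\|q_u\|_{H^{-\alpha}(\Omega)}\le c_2(\kappa)$ encodes precisely that $\partial_x^2\Phi_u\in H^{-\alpha}(\Omega)$, the offending factor $\partial_xU\sim\partial_x^2u\in H^{-\alpha}(I)$ being absorbed. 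Thus $\partial_x\Phi_u$ has a full $\eta$-derivative in $L_2$ and an $x$-derivative in $H^{-\alpha}$, so by the anisotropic embedding ``$w,\partial_\eta w\in L_2(\Omega)$ and $\partial_x w\in H^{-\alpha}(\Omega)$ imply $w\in H^{1-\alpha}(\Omega)$'' one gets $\partial_x\Phi_u\in H^{1-\alpha}(\Omega)$. Together with $\partial_\eta\Phi_u\in H^1(\Omega)\subset H^{1-\alpha}(\Omega)$ this yields $\Phi_u\in H^{2-\alpha}(\Omega)$, and transporting $\nabla\psi_u$ back through the $C^1$-diffeomorphism $T_u$ (using that $H^{1-\alpha}(I)$ is a multiplier algebra since $1-\alpha>1/2$) produces $\psi_u\in H^{2-\alpha}(\Omega(u))$. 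This anisotropic step, and in particular the verification that the lower-order contribution $\eta\,(\partial_xU)\,\partial_\eta\Phi_u$ stays in $H^{-\alpha}(\Omega)$, is the main obstacle; it is exactly the content packaged into the estimate \eqref{g3a} of Proposition~\ref{gth1}.

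For the continuity of $g$, I would rewrite it with the help of \eqref{a101} and the trace of $\Phi_u$ on the upper side $\{\eta=1\}$ of $\Omega$, namely
\begin{equation*}
g(u)=\left(1+\varepsilon^2|\partial_x u|^2\right)\,\frac{\left(1+\partial_\eta\Phi_u(\cdot,1)\right)^2}{(1+u)^2}\ .
\end{equation*}
Let $u_n\to u$ in $S^{2-\alpha}$; then the $u_n$ remain in some $\overline{S}_2^{2-\alpha}(\kappa)$ and, a fortiori, converge weakly in $H^{2-\alpha}(I)$, so \eqref{g4} gives $\Phi_{u_n}\rightharpoonup\Phi_u$ in $X(\Omega)$, whence $\partial_\eta\Phi_{u_n}\rightharpoonup\partial_\eta\Phi_u$ in $H^1(\Omega)$. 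The traces on $\{\eta=1\}$ therefore converge weakly in $H^{1/2}(I)$ and, by the compact embedding $H^{1/2}(I)\hookrightarrow\hookrightarrow H^{\sigma_1}(I)$ for some $\sigma_1\in(\sigma,1/2)$, strongly in $H^{\sigma_1}(I)$ while staying bounded in $H^{1/2}(I)$. Passing to the square is then handled by the bilinear multiplication estimate $H^{1/2}(I)\cdot H^{\sigma_1}(I)\hookrightarrow H^{\sigma}(I)$, valid because $1/2+\sigma_1-\sigma>1/2$, applied to the factorised difference $(1+\partial_\eta\Phi_{u_n}(\cdot,1))^2-(1+\partial_\eta\Phi_u(\cdot,1))^2$. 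Finally, $u_n\to u$ in $H^{2-\alpha}(I)$ forces the prefactor $(1+\varepsilon^2|\partial_x u_n|^2)/(1+u_n)^2$ to converge in $H^{1-\alpha}(I)$, which multiplies $H^{\sigma}(I)$ continuously into itself since $1-\alpha>1/2$. Combining these convergences gives $g(u_n)\to g(u)$ in $H^{\sigma}(I)$ for every $\sigma\in[0,1/2)$, that is, $g\in C(S^{2-\alpha},H^\sigma(I))$.
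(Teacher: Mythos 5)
Your continuity-of-$g$ argument is sound and essentially the paper's own (weak convergence from \eqref{g4}, compactness, the trace theorem, and Amann's multiplication results), but the regularity half has a genuine gap at its central step. The identity $q_u=\partial_x^2\Phi_u-\eta\,(\partial_x U)\,\partial_\eta\Phi_u$ is correct; however, \eqref{g3a} bounds only this \emph{combination} in $H^{-\alpha}(\Omega)$, not the two terms separately, and your assertion that the verification of $\eta\,(\partial_x U)\,\partial_\eta\Phi_u\in H^{-\alpha}(\Omega)$ ``is exactly the content packaged into \eqref{g3a}'' is a misreading: you are invoking \eqref{g3a} to justify splitting the very combination that \eqref{g3a} controls. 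That splitting is a borderline product, $\partial_x U\in H^{-\alpha}(I)$ against $\partial_\eta\Phi_u\in H^1(\Omega)$ in dimension two, where the sum of smoothness indices equals $d/2=1$, so the multiplication theorem does not apply; indeed the paper's Lemma~\ref{gle3a} places this product only in $H^{-\nu}(\Omega)$ with $\nu>\alpha$ \emph{strictly} (via $H^{1-\alpha}\cdot H^1\to H^{1-\nu}$), and this unavoidable loss is precisely why Proposition~\ref{gth1} asserts $\Phi_u\in H^{2-\nu}(\Omega)$ and not $H^{2-\alpha}(\Omega)$. Your intermediate conclusion $\Phi_u\in H^{2-\alpha}(\Omega)$ is therefore stronger than anything the paper establishes, and it is exactly the point left unproved. (It might conceivably be rescued by exploiting that $\partial_x U$ depends on $x$ alone, i.e.\ a one-dimensional multiplication $H^{-\alpha}(I)\cdot H^1(I)\to H^{-\alpha}(I)$ with $\eta$ as a parameter, but no such argument appears in your proposal.) The same defect recurs in your final transport step: even granting $\Phi_u\in H^{2-\alpha}(\Omega)$, the chain rule for $\partial_x^2\psi_u$ reintroduces the product $(\partial_x U)\,\partial_\eta\Phi_u$, and the ``multiplier algebra'' property of $H^{1-\alpha}(I)$ is of no help when one factor has negative smoothness.

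The paper's proof is arranged so that this splitting is never needed. Writing out $\partial_x^2\psi_u$ explicitly in the variables $(x,\eta)$ gives
\begin{equation*}
\partial_x^2\psi_u \;=\; \underbrace{\left(\partial_x^2\Phi_u-\eta\,\partial_x U\,\partial_\eta\Phi_u\right)}_{=\,q_u}\;+\;r_u\;+\;s_u\ ,
\end{equation*}
where $r_u$ collects products that lie in $L_2(\Omega)$ thanks to $\Phi_u\in X(\Omega)$ and $u\in W^1_\infty(I)$, and $s_u=\eta\left[U^2-\partial_x U\right]\in H^{-\alpha}(\Omega)$ because $H^{1-\alpha}(I)$ is an algebra. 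The problematic pair of terms appears in the expansion with exactly the coefficients that reconstitute $q_u$ as a single block, so \eqref{g3a} applies directly and yields $\partial_x^2\psi_u\in H^{-\alpha}(\Omega)$; combined with $\partial_x\partial_z\psi_u,\,\partial_z^2\psi_u\in L_2$, this gives $\psi_u\in H^{2-\alpha}(\Omega(u))$ without ever attributing $H^{-\alpha}$-regularity to $\partial_x^2\Phi_u$ itself. You should restructure the regularity part along these lines, or else supply an independent proof that $(\partial_x U)\,\partial_\eta\Phi_u\in H^{-\alpha}(\Omega)$.
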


 As already indicated, Proposition~\ref{lea5} and Proposition~\ref{lea5g} are now consequences of Corollary~\ref{gth0} which is proved in Subsection~\ref{subsec:42}.

\subsection{Auxiliary Results}

The starting point for the proof of Proposition~\ref{gth1} is the solvability of the Dirichlet problem for $\mathcal{L}_u$ in $H^{-1}(\Omega)$ for $u\in \overline{S}_2^{2-\alpha}(\kappa)$ and in $L_2(\Omega)$ for $u\in \overline{S}_p^2(\kappa)$ with $p>2$. 

\begin{lemma}\label{gle2}
Let $\alpha\in [0,1/2)$, $p>2$, and $\kappa\in (0,1)$.
\begin{itemize}
\item[(i)] Given $u\in \overline{S}_2^{2-\alpha}(\kappa)$ and $h\in  H^{-1}(\Omega)$ there is a unique weak solution $\Phi\in H_D^1(\Omega)$ to
\begin{equation}
\mathcal{L}_u \Phi = h \;\;\text{ in }\;\; \Omega\ , \quad \Phi=0 \;\;\text{ on }\;\; \partial\Omega\ . \label{g5}
\end{equation}
Moreover, there is $c_3(\kappa)$ depending only on $\varepsilon$, $\alpha$, and $\kappa$ such that 
\begin{equation}
\|\Phi\|_{H^1(\Omega)} \le c_3(\kappa) \|h\|_{H_{D}^{-1}(\Omega)}\ . \label{g6}
\end{equation}

\item[(ii)] Given $u\in \overline{S}_p^{2}(\kappa)$ and $h\in L_2(\Omega)$ there is a unique solution $\Phi\in H_D^1(\Omega)\cap H^2(\Omega)$ to \eqref{g5}.
\end{itemize}
\end{lemma}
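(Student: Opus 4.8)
The plan is to avoid working directly with the non-divergence form \eqref{acdc}, whose zeroth-order coefficient contains the merely distributional quantity $\partial_x^2u/(1+u)$, and to work instead with the symmetric divergence-form bilinear form inherited from the Dirichlet energy of the harmonic problem. Since $\Phi_u$ is (up to an affine term) the pullback under $T_u$ of $\psi_u$, the Dirichlet integral $\int_{\Omega(u)}(\e^2|\partial_x\psi|^2+|\partial_z\psi|^2)\,\rd(x,z)$ transforms into an integral over the fixed rectangle $\Omega$, which leads me to introduce, for $w,\vartheta\in H_D^1(\Omega)$,
\[
\mathcal{B}_u(w,\vartheta) := \int_\Omega \left[ \e^2(1+u)\,(\partial_x w - \eta U \partial_\eta w)(\partial_x \vartheta - \eta U \partial_\eta \vartheta) + \frac{1}{1+u}\,\partial_\eta w\,\partial_\eta \vartheta \right] \rd(x,\eta),
\]
with $U=\partial_x\ln(1+u)$. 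The decisive point is that $\mathcal{B}_u$ depends on $u$ only through $U$ and $(1+u)^{\pm1}$ and never through $\partial_x^2u$. For $u\in\overline{S}_2^{2-\alpha}(\kappa)$ with $\alpha\in[0,1/2)$ the embedding $H^{2-\alpha}(I)\hookrightarrow C^1([-1,1])$ guarantees $U\in C([-1,1])$ with $\|U\|_{L_\infty(I)}\le C(\kappa)$, while $\kappa\le 1+u\le 1+1/\kappa$, so that all coefficients of $\mathcal{B}_u$ are bounded on $\Omega$ by a constant depending only on $\e$, $\alpha$, and $\kappa$. An integration by parts identifies $\mathcal{B}_u$ with the weak form of \eqref{g5} (the positive smooth factor $1+u$ being distributed so as to render the form symmetric), so solving part~(i) amounts to finding $\Phi\in H_D^1(\Omega)$ with $\mathcal{B}_u(\Phi,\vartheta)=\langle h,\vartheta\rangle$ for all $\vartheta\in H_D^1(\Omega)$.

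Continuity of $\mathcal{B}_u$ on $H_D^1(\Omega)$ being immediate from the above $L_\infty$-bounds, the heart of part~(i) is coercivity. Estimating the two directions separately fails, because $\|U\|_{L_\infty(I)}$ may be large; instead I would observe that, for fixed $(x,\eta)$, the integrand of $\mathcal{B}_u(w,w)$ is the quadratic form in $(\partial_x w,\partial_\eta w)$ attached to the symmetric matrix
\[
M(x,\eta) = \begin{pmatrix} \e^2(1+u) & -\e^2(1+u)\eta U \\ -\e^2(1+u)\eta U & \e^2(1+u)\eta^2 U^2 + (1+u)^{-1} \end{pmatrix}.
\]
A direct computation gives the remarkably clean value $\det M = \e^2$, while $\mathrm{tr}\,M\le c(\kappa)$ from the coefficient bounds; since both eigenvalues are positive, the smallest one is bounded below by $\det M/\mathrm{tr}\,M\ge \e^2/c(\kappa)>0$, uniformly in $(x,\eta)\in\Omega$ and $u\in\overline{S}_2^{2-\alpha}(\kappa)$. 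Hence $\mathcal{B}_u(w,w)\ge(\e^2/c(\kappa))\,\|\nabla w\|_{L_2(\Omega)}^2$, and since $w\in H_D^1(\Omega)$ vanishes on $\partial\Omega$, Poincar\'e's inequality upgrades this to coercivity on $H_D^1(\Omega)$. The Lax–Milgram theorem then produces the unique weak solution $\Phi$ together with the estimate \eqref{g6}, with $c_3(\kappa)$ the reciprocal of the coercivity constant.

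For part~(ii) the gain of integrability $p>2$ is exactly what changes the nature of the problem: $u\in W_p^2(I)$ now has $\partial_x^2u\in L_p(I)$, a genuine function, and $W_p^2(I)\hookrightarrow C^1([-1,1])$ ensures that the principal coefficients of $\mathcal{L}_u$ in \eqref{acdc} are continuous on $\overline\Omega$ while the first-order coefficient lies in $L_p(\Omega)$. As $\overline{S}_p^2(\kappa)\subset\overline{S}_2^2(\kappa')$ for a suitable $\kappa'$, part~(i) with $\alpha=0$ already supplies the unique weak solution $\Phi\in H_D^1(\Omega)$, so it only remains to promote its regularity to $H^2(\Omega)$. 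Here I would invoke elliptic $H^2$-regularity for uniformly elliptic operators with continuous principal coefficients and lower-order coefficients in $L_p$, $p>2$, on $\Omega$, using that the rectangle $\Omega$ is convex so that no loss of regularity occurs at its corners (cf. \cite{Gr85}); uniqueness in $H_D^1(\Omega)\cap H^2(\Omega)$ is inherited from the $H^1$-uniqueness of part~(i).

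The main obstacle is throughout the low regularity of $u$. In part~(i) it manifests in the coefficient $\partial_x^2u/(1+u)\in H^{-\alpha}(I)$ of \eqref{acdc}, and the whole strategy hinges on the observation that this offending term simply does not appear once one passes to the energy form $\mathcal{B}_u$, which involves only $U\in L_\infty(\Omega)$; the computation $\det M=\e^2$ is then what makes coercivity uniform. In part~(ii), the delicate point is instead the $H^2$-regularity up to the corners of $\Omega$, which is precisely what the convexity of the rectangle is used to overcome.
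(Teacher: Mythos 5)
Your argument is correct in its core mechanism, and it is genuinely different in character from the paper's proof, which is pure citation: the paper disposes of part~(i) by saying it "is similar to the first statement of \cite[Lemma~2.2]{ELWxx}" and of part~(ii) by invoking \cite[Lemma~6]{ELW14}. What your self-contained route buys is a transparent reason why no smallness condition on $\e\,\|U\|_{L_\infty(I)}$ is needed: the non-symmetric formulation that the paper itself uses later (the form in \eqref{g7b} appearing in the proof of Lemma~\ref{gle2c}) carries the zeroth-order term $-\e^2\int_\Omega U(\partial_x\Phi-\eta U\partial_\eta\Phi)\vartheta\,\rd(x,\eta)$, which cannot be absorbed by the principal part when $\|U\|_{L_\infty(I)}$ is large; your symmetric form $\mathcal{B}_u$, being the pullback of the Dirichlet energy on $\Omega(u)$, has no such term, and the identity $\det M=\e^2$ combined with $\mathrm{tr}\,M\le c(\kappa)$ yields uniform ellipticity with constants depending only on $\e$, $\alpha$, $\kappa$. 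That computation is exactly the analytic heart of the matter and correctly exploits that $\mathcal{B}_u$ involves only $U$ and $(1+u)^{\pm 1}$, never $\partial_x^2 u$.

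Two points need repair. First, your identification of the weak formulation is off by a sign and a weight: integrating $\mathcal{B}_u$ by parts (the terms proportional to $\vartheta$ cancel because $(1+u)U=\partial_x u$) gives
\begin{equation*}
\mathcal{B}_u(w,\vartheta) = -\int_\Omega (1+u)\,\mathcal{L}_u w\;\vartheta\ \rd(x,\eta)\ ,
\end{equation*}
so \eqref{g5} is equivalent to $\mathcal{B}_u(\Phi,\vartheta)=-\langle h,(1+u)\vartheta\rangle$ for all $\vartheta\in H^1_D(\Omega)$, \emph{not} to $\mathcal{B}_u(\Phi,\vartheta)=\langle h,\vartheta\rangle$; as written, your Lax--Milgram solution solves $(1+u)\mathcal{L}_u\Phi=-h$ rather than \eqref{g5}. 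The fix is harmless: since $1+u\in W_\infty^1(I)$ and $1+u\ge\kappa$, the map $\vartheta\mapsto(1+u)\vartheta$ is an automorphism of $H^1_D(\Omega)$ with norm $\le c(\kappa)$, so the corrected right-hand side is still a bounded functional of norm $\le c(\kappa)\|h\|_{H_{D}^{-1}(\Omega)}$, and both \eqref{g6} and the equivalence of the two notions of weak solution (hence uniqueness for \eqref{g5}) follow; but the reduction must be stated with the factor $-(1+u)$ in place. Second, in part~(ii) the step "move the first-order term to the right-hand side and apply $H^2$-theory on the convex rectangle" is not a one-shot argument: the coefficient of $\partial_\eta\Phi$ lies only in $L_p(\Omega)$ while $\nabla\Phi\in L_2(\Omega)$, so their product lies in $L_r(\Omega)$ with $r<2$, and a finite bootstrap (through $W^2_r$-regularity on the rectangle and Sobolev embedding in two dimensions) is needed before landing in $H^2(\Omega)$. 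This is standard and is in effect what the reference \cite[Lemma~6]{ELW14} cited by the paper carries out, but your sketch should acknowledge it rather than present $H^2$-regularity as immediate.
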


\begin{proof}
The proof of Lemma~\ref{gle2}~(i) is similar to that of the first statement of \cite[Lemma~2.2]{ELWxx} thanks to the continuous embedding of $H^{2-\alpha}(I)$ in $W_\infty^1(I)$. Next, Lemma~\ref{gle2}~(ii) follows from the second statement of \cite[Lemma~6]{ELW14}.
\end{proof}

We next provide continuity properties with respect to $u$ and $h$ of the solution $\Phi$ to \eqref{g5}.

\begin{lemma}\label{gle2c}
Let $\alpha\in [0,1/2)$ and $\kappa\in (0,1)$. Consider sequences $(u_n)_{n\ge 1}$ in $ \overline{S}_2^{2-\alpha}(\kappa)$ and $(h_n)_{n\ge 1}$ in $H^{-1}(\Omega)$ such that
$$
u_n \rightharpoonup u \;\text{ in }\; H^{2-\alpha}(I) \;\text{ and }\; h_n \rightharpoonup h \;\text{ in }\; H^{-1}(\Omega)\ . 
$$
Denoting the solution to \eqref{g5} with $(u_n,h_n)$ by $\Phi_n$ and that of \eqref{g5} by $\Phi$ there holds
$$
\Phi_n \rightharpoonup \Phi \;\text{ in }\; H^1(\Omega)\ .
$$
\end{lemma}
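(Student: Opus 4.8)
The plan is to prove the weak continuity statement in Lemma~\ref{gle2c} via a compactness-and-identification argument, exploiting the uniform bound \eqref{g6} provided by Lemma~\ref{gle2}~(i). First I would establish that $(\Phi_n)_{n\ge 1}$ is bounded in $H_D^1(\Omega)$. Since $(h_n)_{n\ge 1}$ converges weakly in $H^{-1}(\Omega)$, it is bounded there, and as each $u_n$ lies in $\overline{S}_2^{2-\alpha}(\kappa)$, the estimate \eqref{g6} gives $\|\Phi_n\|_{H^1(\Omega)} \le c_3(\kappa) \|h_n\|_{H_D^{-1}(\Omega)} \le C$ uniformly in $n$. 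By reflexivity of $H_D^1(\Omega)$, we may extract a subsequence (not relabeled) such that $\Phi_n \rightharpoonup \tilde{\Phi}$ weakly in $H^1(\Omega)$ for some $\tilde{\Phi}\in H_D^1(\Omega)$; the task is then to identify $\tilde{\Phi}$ with the solution $\Phi$ of \eqref{g5} associated to $(u,h)$.

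The identification is the heart of the matter. I would pass to the limit in the weak formulation of \eqref{g5} for $(u_n,h_n,\Phi_n)$, namely the identity
\begin{equation*}
\int_\Omega \left( \text{bilinear form built from the coefficients of } \mathcal{L}_{u_n} \text{ acting on } \Phi_n \text{ and } \vartheta \right)\ \mathrm{d}(x,\eta) = \langle h_n , \vartheta \rangle
\end{equation*}
tested against a fixed $\vartheta\in H_D^1(\Omega)$ (or $C_0^\infty(\Omega)$). The right-hand side converges to $\langle h , \vartheta \rangle$ by the weak convergence $h_n \rightharpoonup h$. For the left-hand side the key observation is that the coefficients of $\mathcal{L}_{u_n}$ are nonlinear functions of $u_n$, $\partial_x u_n$, and $\partial_x^2 u_n$; the weak convergence $u_n \rightharpoonup u$ in $H^{2-\alpha}(I)$ upgrades, by the compact embedding of $H^{2-\alpha}(I)$ into $C^1([-1,1])$ (valid since $2-\alpha > 3/2$), to strong convergence of $u_n$ and $\partial_x u_n$ in $C([-1,1])$. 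This yields strong convergence in $L_\infty(\Omega)$ of all coefficients that depend only on $u_n$ and $\partial_x u_n$ --- namely the leading coefficients $\varepsilon^2$, $-2\varepsilon^2\eta\,\partial_x u_n/(1+u_n)$, and $(1+\varepsilon^2\eta^2(\partial_x u_n)^2)/(1+u_n)^2$ --- which, combined with the weak $H^1$-convergence of $\Phi_n$, lets me pass to the limit in the principal part of the bilinear form.

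The main obstacle, and where care is required, is the lower-order coefficient multiplying $\partial_\eta w$ in \eqref{acdc}, which involves $\partial_x^2 u_n$: here $u_n \rightharpoonup u$ in $H^{2-\alpha}(I)$ gives only weak convergence of $\partial_x^2 u_n$ in $H^{-\alpha}(I)$, not strong convergence, so the naive product $\partial_x^2 u_n \cdot \partial_\eta \Phi_n$ is a product of two merely weakly convergent factors. I would circumvent this by integrating by parts to move the $\partial_x$ off of $u_n$ before passing to the limit, rewriting the offending term so that only $\partial_x u_n$ (which converges strongly in $C([-1,1])$) appears, at the cost of transferring a derivative onto $\vartheta$ and onto $\Phi_n$; this is exactly the structure already encoded in the distribution $q_u$ of \eqref{g1a}, where the $\eta U \partial_x\partial_\eta\Phi$ term has been rearranged via integration by parts. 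After this rearrangement every coefficient appearing is a strong $L_\infty$-limit, and the remaining factors ($\partial_x\Phi_n$, $\partial_\eta\Phi_n$, $\partial_x\partial_\eta\Phi_n\,\vartheta$ with $\vartheta$ smooth) converge weakly in $L_2$, so the products converge and the limit identity shows that $\tilde{\Phi}$ is a weak solution of \eqref{g5} for $(u,h)$. By the uniqueness in Lemma~\ref{gle2}~(i) we conclude $\tilde{\Phi}=\Phi$, and since every subsequence of $(\Phi_n)_{n\ge 1}$ admits a further subsequence converging weakly to this same unique limit $\Phi$, the whole sequence satisfies $\Phi_n \rightharpoonup \Phi$ in $H^1(\Omega)$.
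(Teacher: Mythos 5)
Your proposal is correct and follows essentially the same route as the paper: a uniform $H^1_D(\Omega)$ bound from \eqref{g6}, extraction of a weakly convergent subsequence, passage to the limit in the weak formulation using the compact embedding of $H^{2-\alpha}(I)$ into $W^1_\infty(I)$ (so that the coefficients built from $u_n$ and $\partial_x u_n$ converge strongly), identification of the limit via the uniqueness in Lemma~\ref{gle2}~(i), and the subsequence principle for the whole sequence. The integration by parts you describe to avoid $\partial_x^2 u_n$ is precisely what the paper's weak formulation \eqref{g7b} already encodes, since it involves only $U_n=\partial_x\ln(1+u_n)$ and not second derivatives of $u_n$.
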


\begin{proof}
Let $n\ge 1$ and $\vartheta\in H_D^1(\Omega)$. Setting $U_n := \partial_x\ln{(1+u_n)}$, the weak formulation of \eqref{g5} for $\Phi_n$ reads
\begin{align}
 \varepsilon^2& \int_\Omega \left[ \partial_x \Phi_n - \eta U_n \partial_\eta \Phi_n \right] \partial_x \vartheta\ \rd(x,\eta) \nonumber \\
+ & \int_\Omega \left[ -\varepsilon^2 \eta U_n \partial_x\Phi_n + \left( \frac{1}{(1+u_n)^2} + \varepsilon^2 \eta^2 U_n^2 \right) \partial_\eta \Phi_n \right] \partial_\eta \vartheta\ \rd(x,\eta) \nonumber \\
- & \varepsilon^2 \int_\Omega \left[ U_n \partial_x\Phi_n - \eta U_n^2 \partial_\eta \Phi_n \right] \vartheta\ \rd(x,\eta) = - \int_\Omega h_n \vartheta\ \rd(x,\eta)\ . \label{g7b}
\end{align}
Owing to the compactness of the embedding of $H^{2-\alpha}(I)$ in $W_\infty^1(I)$, there is a subsequence $(u_{n_k})_{k\ge 1}$ of $(u_n)_{n\ge 1}$ such that $(u_{n_k})_{k\ge 1}$ converges toward $u$ in $W_\infty^1(I)$ as $k\to\infty$. This implies in particular that $(U_{n_k})_{k\ge 1}$ and $(U_{n_k}^2)_{k\ge 1}$ converge, respectively, toward $U:=\partial_x\ln{(1+u)}$ and $U^2$ in $L_\infty(I)$. Furthermore, it follows from \eqref{g6} and the boundedness of $(h_n)_{n\ge 1}$ in $H^{-1}(\Omega)$ and that of $(u_n)_{n\ge 1}$ in $\overline{S}_2^{2-\alpha}(\kappa)$ that $(\Phi_n)_{n\ge 1}$ is bounded in $H_D^1(\Omega)$. We may therefore assume that $(\Phi_{n_k})_{k\ge 1}$ converges weakly toward some $\Psi$ in $H_D^1(\Omega)$. Combining the previous weak convergences we realize that all terms in \eqref{g7b} converge and letting $n_k\to\infty$ in \eqref{g7b} shows that $\Psi$ is a weak solution to \eqref{g5}. According to Lemma~\ref{gle2}~(i), $\Psi$ coincides with the unique solution $\Phi$ to \eqref{g5}. This, in turn, implies the convergence of the whole sequence $(\Phi_n)_{n\ge 1}$ and completes the proof.
\end{proof}

We next derive additional estimates on the solution to \eqref{g5} for some specific choices of the right-hand side $h$ and begin with the case $h\in L_2(\Omega)$.

\begin{lemma}\label{gle3a}
Let $\alpha\in [0,1/2)$, $\nu\in (\alpha,1/2)$, $\kappa\in (0,1)$, $u\in \overline{S}_2^{2-\alpha}(\kappa)$, and $h\in L_2(\Omega)$. The unique solution $\Phi$ to \eqref{g5}, given by Lemma~\ref{gle2}~(i), belongs to $X(\Omega)\cap H^{2-\nu}(\Omega)$, and there is $c_4(\kappa)$ depending only on $\varepsilon$, $\alpha$, $\nu$, and $\kappa$ such that 
\begin{equation}
\|\Phi\|_{X(\Omega)} + \|\Phi\|_{H^{2-\nu}(\Omega)} \le c_4(\kappa) \|h\|_{L_2(\Omega)}\ . \label{g8}
\end{equation}
Furthermore, the distribution $q$, defined for $\vartheta\in C_0^\infty(\Omega)$ by
\begin{eqnarray}
\langle q , \vartheta \rangle & := & - \int_\Omega \left[ \partial_x  \Phi(x,\eta) - \eta U(x) \partial_\eta \Phi(x,\eta) \right] \partial_x\vartheta(x,\eta)\ \rd(x,\eta) \nonumber \\
& & \ + \int_\Omega \eta U(x) \partial_x\partial_\eta \Phi(x,\eta) \vartheta(x,\eta)\ \rd(x,\eta) \label{g8a}
\end{eqnarray}
with $U:= \partial_x \ln{(1+u)}$, belongs to $L_2(\Omega)$, and there is $c_5(\kappa)$ depending only on $\varepsilon$, $\alpha$, and $\kappa$ such that 
\begin{equation}
\| q\|_{L_2(\Omega)} \le c_5(\kappa) \|h\|_{L_2(\Omega)}\ . \label{g8b}
\end{equation}
\end{lemma}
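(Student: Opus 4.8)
The plan is to reduce to the case of smooth deflections, establish the estimates there with constants independent of the approximation, and pass to the limit using Lemma~\ref{gle2c}. Lemma~\ref{gle2}~(i) already provides $\Phi\in H_D^1(\Omega)$ together with the bound $\|\Phi\|_{H^1(\Omega)}\le c_3(\kappa)\|h\|_{L_2(\Omega)}$ (recalling $\|h\|_{H_D^{-1}(\Omega)}\le\|h\|_{L_2(\Omega)}$), so I would first pick smooth functions $u_n\to u$ in $H^{2-\alpha}(I)$ with $u_n\in\overline{S}_p^2(\kappa')$ for some $p>2$ and some $\kappa'\in(0,\kappa)$ (mollification, adjusted near $x=\pm1$ to keep the clamped boundary conditions, preserves the lower bound $1+u\ge\kappa$ up to relaxing $\kappa$ and keeps the $H^{2-\alpha}$-norm controlled). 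For each $n$, Lemma~\ref{gle2}~(ii) gives the solution $\Phi_n\in H_D^1(\Omega)\cap H^2(\Omega)$ of \eqref{g5} with this $u_n$. The whole point is then to bound $\|\Phi_n\|_{X(\Omega)}+\|\Phi_n\|_{H^{2-\nu}(\Omega)}$ and $\|q_n\|_{L_2(\Omega)}$ by $c_4(\kappa')\|h\|_{L_2(\Omega)}$ \emph{uniformly} in $n$; the full $H^2$-norm of $\Phi_n$ is \emph{not} uniformly controlled because $\partial_x^2 u_n$ blows up as $n\to\infty$.

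The main step is the $X(\Omega)$-bound, and I expect it to be the chief obstacle. Writing $U_n:=\partial_x\ln(1+u_n)$, the coefficients of $\mathcal{L}_{u_n}$ in \eqref{acdc} are polynomial in $\eta$ and enter through $1/(1+u_n)$, $U_n$ and $U_n^2$, all bounded in $L_\infty(\Omega)$ by a constant depending only on $\kappa'$ (using $H^{2-\alpha}(I)\hookrightarrow W_\infty^1(I)$), together with the single \emph{rough} coefficient $\partial_x U_n$, which multiplies the lowest-order term $\partial_\eta\Phi_n$ only. I would obtain $\partial_\eta\Phi_n\in H^1(\Omega)$ by Nirenberg's translation method in the $\eta$-direction: since the top and bottom boundaries $\{\eta=0\}$ and $\{\eta=1\}$ are flat and $\Phi_n$ vanishes there, and since $\partial_\eta$ of every coefficient of $\mathcal{L}_{u_n}$ stays in $L_\infty(\Omega)$ with the same $\kappa'$-dependent bound, the energy estimate for the $\eta$-difference quotients closes with constants depending only on $\varepsilon$, $\kappa'$ and $\|h\|_{L_2(\Omega)}$, never on $\partial_x^2 u_n$. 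The delicate point is to organise the computation so that $\partial_x U_n$ is never differentiated and, after integration by parts in $x$, is always paired with the $L_\infty$-controlled quantity $U_n$. This yields the uniform bound $\|\partial_\eta\Phi_n\|_{H^1(\Omega)}\le c(\kappa')\|h\|_{L_2(\Omega)}$, that is, the $X(\Omega)$-estimate.

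The $L_2$-bound on $q_n$ then follows by a clean cancellation. For smooth $u_n$ one integrates by parts in \eqref{g8a} to identify $q_n=\partial_x^2\Phi_n-\eta\,\partial_x U_n\,\partial_\eta\Phi_n$, and substituting $\varepsilon^2\partial_x^2\Phi_n$ from $\mathcal{L}_{u_n}\Phi_n=h$ makes the $\partial_x U_n$-contributions cancel exactly, leaving
\begin{equation*}
\varepsilon^2 q_n = h + 2\varepsilon^2\eta U_n\,\partial_x\partial_\eta\Phi_n - \left(\frac{1}{(1+u_n)^2}+\varepsilon^2\eta^2 U_n^2\right)\partial_\eta^2\Phi_n - \varepsilon^2\eta U_n^2\,\partial_\eta\Phi_n\ .
\end{equation*}
Every term on the right lies in $L_2(\Omega)$ and is controlled, via the $X(\Omega)$-estimate and the $L_\infty$-bounds on the coefficients, by $c_5(\kappa')\|h\|_{L_2(\Omega)}$, giving \eqref{g8b}. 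For the $H^{2-\nu}$-bound I would read $\varepsilon^2\partial_x^2\Phi_n$ off the same identity: all terms lie in $L_2(\Omega)$ except $-\varepsilon^2\eta\,\partial_x U_n\,\partial_\eta\Phi_n$, and since $\partial_x U_n\in H^{-\alpha}(I)$ while $\partial_\eta\Phi_n\in L_2\big((0,1);H^1(I)\big)$ with $1>\alpha$, this product lies in $L_2\big((0,1);H^{-\alpha}(I)\big)$; combining this anisotropic control of $\partial_x^2\Phi_n$ with $\partial_\eta\Phi_n\in H^1(\Omega)$ and interpolating yields $\Phi_n\in H^{2-\nu}(\Omega)$ for any $\nu\in(\alpha,1/2)$, uniformly in $n$.

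It remains to pass to the limit. By the uniform bounds $(\Phi_n)_{n\ge1}$ is bounded in $X(\Omega)\cap H^{2-\nu}(\Omega)$, while Lemma~\ref{gle2c} (applied with $u_n\rightharpoonup u$ and $h_n\equiv h$) gives $\Phi_n\rightharpoonup\Phi$ in $H^1(\Omega)$, so $\Phi$ is the solution of \eqref{g5} and, by uniqueness of the weak limit, $\Phi_n\rightharpoonup\Phi$ in $X(\Omega)\cap H^{2-\nu}(\Omega)$; weak lower semicontinuity of the norms then transfers \eqref{g8} to $\Phi$. Finally, since $u_n\to u$ in $H^{2-\alpha}(I)\hookrightarrow W_\infty^1(I)$ we have $U_n\to U$ in $L_\infty(I)$, so each term in the displayed identity converges weakly in $L_2(\Omega)$ (strong $L_\infty$ coefficients against the weak $L_2$-limits of the second derivatives of $\Phi_n$); hence $q_n\rightharpoonup q$ in $L_2(\Omega)$, $q$ is exactly the distribution in \eqref{g8a}, and \eqref{g8b} follows by weak lower semicontinuity.
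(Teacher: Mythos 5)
Your skeleton coincides with the paper's: regularize $u$ by smooth $u_n$, use Lemma~\ref{gle2}~(ii) to get $\Phi_n\in H^2(\Omega)$, prove estimates uniform in $n$, and pass to the limit with Lemma~\ref{gle2c} and compactness; moreover your identity $\varepsilon^2 q_n = h + 2\varepsilon^2\eta U_n\partial_x\partial_\eta\Phi_n - \bigl[(1+u_n)^{-2}+\varepsilon^2\eta^2U_n^2\bigr]\partial_\eta^2\Phi_n - \varepsilon^2\eta U_n^2\partial_\eta\Phi_n$ is exactly the paper's \eqref{g18}, and your limit passage is Step~2 of the paper's proof. The problem is that the one step you yourself call the chief obstacle --- the uniform bound $\|\partial_\eta\Phi_n\|_{H^1(\Omega)}\le c(\kappa)\|h\|_{L_2(\Omega)}$ --- is asserted rather than proved, and the mechanism you propose for it rests on a false premise. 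The coefficient of $\partial_\eta w$ in \eqref{acdc} equals $\varepsilon^2\eta\,[U_n^2-\partial_x U_n]$ (since $2U_n^2-\partial_x^2u_n/(1+u_n)=U_n^2-\partial_xU_n$), so its $\eta$-derivative is $\varepsilon^2[U_n^2-\partial_xU_n]$, which contains precisely the rough factor $\partial_x U_n$; this is uniformly bounded only in $H^{-\alpha}(I)$, not in $L_\infty(I)$ (nor even in $L_2(I)$), so your claim that ``$\partial_\eta$ of every coefficient stays in $L_\infty$ with a $\kappa'$-dependent bound'' fails, and with it the assertion that the energy estimate for $\eta$-difference quotients closes. (Difference quotients in $\eta$ are also normal to the boundary portions $\{\eta=0\}$, $\{\eta=1\}$ carrying the Dirichlet data, hence not admissible near them; and they are unnecessary anyway, since for smooth $u_n$ one already has $\Phi_n\in H^2(\Omega)$ and may test directly with $\partial_\eta^2\Phi_n$ --- the whole issue is uniformity of constants, not qualitative regularity.)

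Concretely, whatever scheme you use, you must estimate the term $R_1=2\int_\Omega\eta(\partial_xU_n-U_n^2)\,\partial_\eta\Phi_n\,\partial_\eta^2\Phi_n\,\rd(x,\eta)$ of \eqref{g11}. Writing $2\partial_\eta\Phi_n\partial_\eta^2\Phi_n=\partial_\eta(\partial_\eta\Phi_n)^2$ and integrating by parts in $\eta$ produces the trace term $\int_{-1}^1\partial_xU_n\,\gamma_n^2\,\rd x$ with $\gamma_n:=\partial_\eta\Phi_n(\cdot,1)$, and this term cannot be ``paired with the $L_\infty$-controlled quantity $U_n$ after integration by parts in $x$'': that would generate $\int_{-1}^1 U_n\,\gamma_n\,\partial_x\gamma_n\,\rd x$, where $\partial_x\gamma_n$ is the trace of $\partial_x\partial_\eta\Phi_n$ on $\{\eta=1\}$, a quantity not controlled by anything at hand (it is exactly of the order of what you are trying to bound, and traces lose half a derivative on top of that). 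The paper closes the estimate by an entirely different device: it bounds the trace term by $\|\partial_xU_n\|_{H^{-\alpha}(I)}\,\|\gamma_n^2\|_{H^\alpha(I)}$, uses the pointwise multiplication $H^{1/2}(I)\cdot H^{\nu}(I)\to H^{\alpha}(I)$, the trace theorem and interpolation to obtain $\|\gamma_n^2\|_{H^\alpha(I)}\le c\,\|\partial_\eta\Phi_n\|_{H^1(\Omega)}^{(3+2\nu)/2}\|\partial_\eta\Phi_n\|_{L_2(\Omega)}^{(1-2\nu)/2}$, and only then Young's inequality --- crucially the exponent $(3+2\nu)/2$ is strictly less than $2$ because $\nu<1/2$ --- to absorb the result into the left-hand side of \eqref{g10}. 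This duality/trace/interpolation argument is the heart of the lemma (it is also the reason the statement requires $\alpha<1/2$ and $\nu\in(\alpha,1/2)$), and it is absent from your proposal; without it, neither the $X(\Omega)$ bound nor your subsequent $L_2$-bound on $q_n$ and $H^{2-\nu}$-bound on $\Phi_n$ (both of which feed on the $X(\Omega)$ estimate) gets off the ground.
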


\begin{proof}
\noindent\textbf{Step~1:} We first assume that $u\in \overline{S}_2^{2-\alpha}(\kappa)\cap W_p^2(I)$ for some $p>2$. Clearly, there is $\kappa'\in (\kappa,1)$ such that $u\in\overline{S}_p^2(\kappa')$. Thus, by Lemma~\ref{gle2}~(ii), the solution $\Phi$ to \eqref{g5} belongs to $H^2(\Omega)$. Set $\zeta := \partial_\eta^2 \Phi$ and $\omega := \partial_x \partial_\eta \Phi$. We multiply \eqref{g5} by $\zeta$ and integrate over $\Omega$ to find
\begin{align*}
\int_\Omega h \zeta\ \mathrm{d}(x,\eta)  =\ & \varepsilon^2 \int_\Omega \partial_x^2 \Phi \partial_\eta^2 \Phi\ \mathrm{d}(x,\eta) - 2 \varepsilon^2 \int_\Omega \eta U \omega \zeta\ \rd(x,\eta) \\
& \ + \int_\Omega \left[ \frac{1}{(1+u)^2} + \varepsilon^2 \eta^2 U^2 \right] \zeta^2\ \rd(x,\eta) + \varepsilon^2 \int_\Omega \eta \left[ U^2 - \partial_x U \right] \zeta \partial_\eta \Phi\ \rd(x,\eta)\ .
\end{align*}
Using the identity 
$$
\int_\Omega \partial_x^2 \Phi \partial_\eta^2 \Phi\ \mathrm{d}(x,\eta)  = \int_\Omega \omega^2\ \mathrm{d}(x,\eta)
$$
from \cite[Lemma~4.3.1.2 \&~4.3.1.3]{Gr85} we deduce
\begin{equation}
\varepsilon^2 \|\omega - \eta U \zeta \|_{L_2(\Omega)}^2 + \left\| \frac{\zeta}{1+u} \right\|_{L_2(\Omega)}^2 = \frac{\varepsilon^2}{2} R_1 + R_2 \label{g10}
\end{equation}
with
\begin{eqnarray}
R_1 & := & 2 \int_\Omega \eta (\partial_x U - U^2) \partial_\eta \Phi \partial_\eta^2 \Phi\ \rd(x,\eta)\ , \label{g11} \\
R_2 & := & \int_\Omega h \zeta \rd(x,\eta)\ . \label{g12}
\end{eqnarray}
Introducing the trace $\gamma(x) := \partial_\eta \Phi(x,1)$ for $x\in I$, we infer from Green's formula and $U(\pm 1)=0$ that 
\begin{align*}
R_1 & =\int_{-1}^1  (\partial_x U - U^2) \gamma^2\ \rd x - \int_\Omega (\partial_x U - U^2) (\partial_\eta \Phi)^2\ \rd(x,\eta) \\ 
& =\int_{-1}^1  (\partial_x U - U^2) \gamma^2\ \rd x + \int_\Omega U^2 (\partial_\eta \Phi)^2\ \rd(x,\eta) + 2 \int_\Omega U \omega \partial_\eta\Phi\ \rd(x,\eta) \\ 
& =\int_{-1}^1  (\partial_x U - U^2) \gamma^2\ \rd x + \int_\Omega U^2 (\partial_\eta \Phi)^2\ \rd(x,\eta) \\
& \quad + 2 \int_\Omega U \partial_\eta\Phi (\omega - \eta U \zeta)\ \rd(x,\eta) + 2 \int_\Omega U^2 \eta \partial_\eta\Phi \partial_\eta^2\Phi\ \rd(x,\eta) \ .
\end{align*}
Using once more Green's formula, we end up with
\begin{equation}
R_1 =\int_{-1}^1 \partial_x U \gamma^2\ \rd x + 2 \int_\Omega U \partial_\eta\Phi (\omega - \eta U \zeta)\ \rd(x,\eta) \ . \label{g13}
\end{equation}
Since $\alpha\in [0,1/2)$, $H^{1-\alpha}(I)$ is an algebra and it follows from the fact that $u\in\overline{S}_2^{2-\alpha}(\kappa)$ and the Lipschitz continuity of $r\mapsto (1+r)^{-1}$ in  $[\kappa-1, \infty)$ that 
$$
\|\partial_x U\|_{H^{-\alpha}(I)} \le c \| U\|_{H^{1-\alpha}(I)} \le c \|\partial_x u\|_{H^{1-\alpha}(I)} \left\| \frac{1}{1+u} \right\|_{H^{1-\alpha}(I)} \le c(\kappa) 
$$
while the continuity of pointwise multiplication (see \cite[Theorem~4.1 \& Remark~4.2(d)]{Am91})
$$
H^{1/2}(I) \cdot H^{\nu}(I) \longrightarrow H^\alpha(I)\ , \quad 0\le \alpha < \nu < \frac{1}{2}\ ,
$$
gives
$$
\left\| \gamma^2 \right\|_{H^\alpha(I)} \le c \|\gamma\|_{H^{1/2}(I)} \|\gamma\|_{H^\nu(I)}\ .
$$
Since the trace operator maps  $H^s(\Omega)$ continuously in $H^{s-1/2}(I)$ for all $s\in (1/2,1]$ by \cite[Theorem~1.5.2.1]{Gr85} and since the complex interpolation space $\left[ L_2(\Omega) , H_D^{1}(\Omega) \right]_{(2\nu+1)/2}$ coincides  up to equivalent norms with $H_D^{(2\nu+1)/2}(\Omega)$ we further obtain 
\begin{align*}
\left\| \gamma^2 \right\|_{H^\alpha(I)} & \le c \|\partial_\eta \Phi\|_{H^1(\Omega)} \|\partial_\eta \Phi\|_{H^{(2\nu+1)/2}(\Omega)} \\
& \le c \|\partial_\eta \Phi\|_{H^1(\Omega)}^{(3+2\nu)/2} \|\partial_\eta \Phi\|_{L_2(\Omega)}^{(1-2\nu)/2}\ . 
\end{align*}
We now combine the above estimates, \eqref{g13}, Young's inequality, the continuous embedding of $H^{2-\alpha}(I)$ in $W_\infty^1(I)$, and \eqref{g6} to obtain, for $\delta \in (0,1)$, 
\begin{align*}
|R_1| & \le \|\partial_x U\|_{H^{-\alpha}(I)} \left\| \gamma^2 \right\|_{H^\alpha(I)} + \frac{1}{2} \| \omega - \eta U \zeta \|_{L_2(\Omega)}^2 + 2 \|U\|_{L_\infty(I)}^2 \left\| \partial_\eta\Phi \right\|_{L_2(\Omega)}^2 \\
& \le c(\kappa) \|\partial_\eta \Phi\|_{H^1(\Omega)}^{(3+2\nu)/2} \|\partial_\eta \Phi\|_{L_2(\Omega)}^{(1-2\nu)/2} \\
& \quad + \frac{1}{2} \| \omega - \eta U \zeta \|_{L_2(\Omega)}^2 + c \|\partial_x u\|_{L_\infty(I)}^2 \left\| \frac{1}{1+u} \right\|_{L_\infty(I)}^2 \|\partial_\eta\Phi\|_{L_2(\Omega)}^2 \\
& \le \delta \|\partial_\eta \Phi\|_{H^1(\Omega)}^2 + c(\kappa,\delta) \|\partial_\eta \Phi\|_{L_2(\Omega)}^2 + \frac{1}{2} \| \omega - \eta U \zeta \|_{L_2(\Omega)}^2 \ .
\end{align*}
Since 
\begin{align}
\|\partial_\eta \Phi\|_{H^1(\Omega)} & \le \|\partial_\eta \Phi\|_{L_2(\Omega)} + \|\omega\|_{L_2(\Omega)} + \|\zeta\|_{L_2(\Omega)} \nonumber\\
& \le \|\partial_\eta \Phi\|_{L_2(\Omega)} + \|\omega-\eta U \zeta\|_{L_2(\Omega)} + \|\partial_x u\|_{L_\infty(I)} \left\| \frac{\zeta}{1+u} \right\|_{L_2(\Omega)}\nonumber \\
&\quad+ \|1+u\|_{L_\infty(I)} \left\| \frac{\zeta}{1+u} \right\|_{L_2(\Omega)} \nonumber \\
& \le \|\partial_\eta \Phi\|_{L_2(\Omega)} + \|\omega-\eta U \zeta\|_{L_2(\Omega)} + c(\kappa) \left\| \frac{\zeta}{1+u} \right\|_{L_2(\Omega)} \label{g13a}
\end{align}
and
$$
\|\partial_\eta \Phi\|_{L_2(\Omega)} \le c_3(\kappa) \|h\|_{H_{D}^{-1}(\Omega)}
$$
by \eqref{g6}, we further obtain
$$
|R_1| \le \left( 2\delta + \frac{1}{2} \right) \| \omega - \eta U \zeta \|_{L_2(\Omega)}^2 + c(\kappa) \delta \left\| \frac{\zeta}{1+u} \right\|_{L_2(\Omega)}^2 + c(\kappa,\delta) \|h\|_{H_{D}^{-1}(\Omega)}^2 \ .
$$
Choosing $\delta\in (0,1/4)$ such that $c(\kappa) \delta <1/(2\varepsilon^2)$, we conclude that
\begin{equation}
|R_1| \le \| \omega - \eta U \zeta \|_{L_2(\Omega)}^2 + \frac{1}{2\varepsilon^2} \left\| \frac{\zeta}{1+u} \right\|_{L_2(\Omega)}^2 + c(\kappa) \|h\|_{H_{D}^{-1}(\Omega)}^2\ . \label{g14}
\end{equation}
Next, by Cauchy-Schwarz' and Young's inequalities,
\begin{equation}
|R_2| \le \|1+u\|_{L_\infty(I)} \|h\|_{L_2(\Omega)} \left\| \frac{\zeta}{1+u} \right\|_{L_2(\Omega)} \le \frac{1}{4} \left\| \frac{\zeta}{1+u} \right\|_{L_2(\Omega)}^2 + c(\kappa) \|h\|_{L_2(\Omega)}^2\ . \label{g15}
\end{equation}
We then infer from \eqref{g10}, \eqref{g14}, and \eqref{g15} that
$$
\varepsilon^2 \|\omega - \eta U \zeta \|_{L_2(\Omega)}^2 + \left\| \frac{\zeta}{1+u} \right\|_{L_2(\Omega)}^2 \le c(\kappa) \left( \|h\|_{H_{D}^{-1}(\Omega)}^2 + \|h\|_{L_2(\Omega)}^2 \right) \le c(\kappa) \|h\|_{L_2(\Omega)}^2\ . 
$$
Using once more that $u\in\overline{S}_2^{2-\alpha}(\kappa)$ together with \eqref{g6} and the definition of $\omega$ and $\zeta$, we finally obtain 
\begin{equation}
\Phi\in X(\Omega)\quad\text{ with }\quad
\|\partial_\eta\Phi\|_{H^1(\Omega)} \le c(\kappa) \|h\|_{L_2(\Omega)}\ . \label{g16}
\end{equation}
Therefore, recalling the definition \eqref{g8a}, the regularity of $u$ and $\Phi$ and \eqref{g5} allow us to write
\begin{equation}
\varepsilon^2 q = \varepsilon^2 \partial_x^2 \Phi - \varepsilon^2 \eta \partial_x U \partial_\eta\Phi = h + 2 \varepsilon^2 \eta U \omega - \left[ \frac{1}{(1+u)^2} + \varepsilon^2 \eta^2 U^2 \right] \zeta - \varepsilon^2 \eta U^2\partial_\eta \Phi\ , \label{g18}
\end{equation}
and it follows from \eqref{g16} and the continuous embedding of $H^{2-\alpha}(I)$ in $W_\infty^1(I)$ that the right-hand side of the above identity belongs to $L_2(\Omega)$ with
\begin{equation}
\left\| q \right\|_{L_2(\Omega)} \le c(\kappa) \|h\|_{L_2(\Omega)}\ . \label{g17}
\end{equation}
Since
$$
\eta \partial_x U \partial_\eta\Phi  = \partial_x \left( \eta U \partial_\eta\Phi \right) - \eta U \omega
$$
and pointwise multiplication
$$
H_D^{1-\alpha}(\Omega) \cdot H_D^1(\Omega) \longrightarrow H_D^{1-\nu}(\Omega)
$$
is continuous \cite{Am91}, we deduce from \eqref{g16} and the continuous embedding of $H^{2-\alpha}(I)$ in $W_\infty^1(I)$ that 
$$
[(x,\eta)\mapsto \eta \partial_x U \partial_\eta\Phi] \in H^{-\nu}(\Omega) \;\;\text{ with }\;\; \left\| \eta \partial_x U \partial_\eta\Phi \right\|_{H^{-\nu}(\Omega)} \le c(\kappa) \|h\|_{L_2(\Omega)}\ .
$$
This last property together with \eqref{g16}, \eqref{g18}, and \eqref{g17} entails that $\Phi\in H^{2-\nu}(\Omega)$ with 
\begin{equation*}
\|\Phi\|_{H^{2-\nu}(\Omega)} \le c(\kappa) \|h\|_{L_2(\Omega)}\ .
\end{equation*}
We have thus shown that Lemma~\ref{gle3a} holds true for $u\in \overline{S}_2^{2-\alpha}(\kappa)\cap W_p^2(I)$ with $p>2$. 

\medskip

\noindent\textbf{Step~2:} Let now $u\in \overline{S}_2^{2-\alpha}(\kappa)$. Classical density arguments ensure that there is a sequence $(u_n)_{n\ge 1}$ such that $u_n\in W_3^2(I)$ for each $n\ge 1$ and
\begin{equation}
\lim_{n\to\infty} \|u_n - u \|_{H^{2-\alpha}(I)} = 0\ . \label{g19}
\end{equation}
Furthermore, owing to the continuous embedding $H^{2-\alpha}(I)$ in $W_\infty^1(I)$ and the convergence \eqref{g19}, we may assume that $u_n\in \overline{S}_2^{2-\alpha}((1+\kappa)/2)$ for each $n\ge 1$. Denoting the solution to \eqref{g5} with $u_n$ instead of $u$ by $\Phi_n$, it follows from the analysis performed in \textbf{Step~1} that $\Phi_n\in X(\Omega)\cap H^{2-\nu}(\Omega)$ satisfies  
\begin{equation}
\|\Phi_n\|_{X(\Omega)} + \|\Phi_n\|_{H^{2-\nu}(\Omega)} \le c(\kappa) \|h\|_{L_2(\Omega)}\ . \label{g20}
\end{equation}
Owing to the compactness of the embeddings of $H^{2-\nu}(\Omega)$ in $H^1(\Omega)$, Lemma~\ref{gle2c} together with \eqref{g19} and \eqref{g20} imply that
$$
\Phi_n \longrightarrow \Phi \;\text{ in }\; H^1(\Omega) \quad\text{ and }\quad \Phi_n \rightharpoonup \Phi \;\text{ in }\; X(\Omega) \cap H^{2-\nu}(\Omega)\ , 
$$
where $\Phi\in H^1_D(\Omega)$ is the weak solution to \eqref{g5} which also belongs to $X(\Omega) \cap H^{2-\nu}(\Omega)$ and satisfies \eqref{g20}.
\end{proof}

We next consider the case where the right-hand side $h$ of \eqref{g5} is less regular but is a derivative with respect to $x$.

\begin{lemma}\label{gle3b}
Let $\alpha\in [0,1/2)$, $\alpha_1\in [0,1/2)$, $\nu\in (\alpha,1/2)\cap [\alpha_1,1/2)$, $\kappa\in (0,1)$. Let $u\in \overline{S}_2^{2-\alpha}(\kappa)$ and suppose that $h\in H^{-1}(\Omega)$ is of the form 
\begin{equation}
h(x,\eta) = \partial_x h_1(x) h_2(\eta)\ , \quad (x,\eta)\in\Omega\ ,  \;\text{ with } h_1 \in H^{1-\alpha_1}(I) \;\text{ and }\; h_2 \in H^1(0,1)\ . \label{g21} 
\end{equation} 
Then the unique solution $\Phi$ to \eqref{g5}, given by Lemma~\ref{gle2}~(i), belongs to $X(\Omega)\cap H^{2-\nu}(\Omega)$ and there is $c_6(\kappa)$ depending only on $\varepsilon$, $\alpha$, $\alpha_1$, $\nu$, and $\kappa$ such that 
\begin{equation}
\|\Phi\|_{X(\Omega)} + \|\Phi\|_{H^{2-\nu}(\Omega)} \le c_6(\kappa)  \|h_1\|_{H^{1-\alpha_1}(I)}\ \|h_2\|_{H^1(0,1)}\ . \label{g23}
\end{equation}
Moreover, the distribution $q$ defined in \eqref{g8a} belongs to $H^{-\alpha_1}(\Omega)$ and there is $c_7(\kappa)$ depending only on $\varepsilon$, $\alpha$, $\alpha_1$, and $\kappa$ such that 
\begin{equation}
\| q\|_{H^{-\alpha_1}(I)} \le c_7(\kappa) \|h_1\|_{H^{1-\alpha_1}(I)}\ \|h_2\|_{H^1(0,1)}\ . \label{g23b}
\end{equation}
\end{lemma}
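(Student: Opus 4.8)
The plan is to follow closely the two-step scheme of Lemma~\ref{gle3a}, the only genuinely new ingredient being the treatment of the right-hand side $h=\partial_x(h_1 h_2)$, which is no longer square integrable once $\alpha_1>0$. First I would reduce to an a priori estimate by imposing the additional regularity $u\in \overline{S}_2^{2-\alpha}(\kappa)\cap W_p^2(I)$ for some $p>2$ and $h_1\in H^1(I)$, so that $h\in L_2(\Omega)$ and Lemma~\ref{gle3a} already provides a solution $\Phi\in X(\Omega)\cap H^{2-\nu}(\Omega)$ with $\partial_\eta\Phi\in H^1(\Omega)$; the point is to re-derive \eqref{g23} and \eqref{g23b} with the right-hand side controlled by $\|h_1\|_{H^{1-\alpha_1}(I)}\|h_2\|_{H^1(0,1)}$ rather than by $\|h\|_{L_2(\Omega)}$. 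The starting point is the energy identity \eqref{g10}, which is unchanged since $h$ enters only through the term $R_2=\int_\Omega h\,\zeta\,\rd(x,\eta)$ with $\zeta=\partial_\eta^2\Phi$. The geometric term $R_1$ is then bounded exactly as in Lemma~\ref{gle3a}, yielding \eqref{g14}; here $\|h\|_{H_D^{-1}(\Omega)}\le\|h_1 h_2\|_{L_2(\Omega)}\le\|h_1\|_{H^{1-\alpha_1}(I)}\|h_2\|_{H^1(0,1)}$, so that together with \eqref{g6} the basic bound $\|\Phi\|_{H^1(\Omega)}\le c(\kappa)\|h_1\|_{H^{1-\alpha_1}(I)}\|h_2\|_{H^1(0,1)}$ is available.

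The heart of the matter, and the step I expect to be the main obstacle, is the bound on $R_2=\int_\Omega \partial_x h_1\,h_2\,\partial_\eta^2\Phi\,\rd(x,\eta)$. A direct pairing is hopeless because $\partial_\eta^2\Phi$ is merely in $L_2(\Omega)$ and carries no $x$-regularity, while integrating by parts in $x$ would produce the third derivative $\partial_x\partial_\eta^2\Phi$. The resolution is to integrate by parts in $\eta$ first: writing $\int_0^1 h_2\,\partial_\eta^2\Phi\,\rd\eta=h_2(1)\partial_\eta\Phi(\cdot,1)-h_2(0)\partial_\eta\Phi(\cdot,0)-\int_0^1\partial_\eta h_2\,\partial_\eta\Phi\,\rd\eta$ replaces $\partial_\eta^2\Phi$ by $\partial_\eta\Phi$ and two boundary traces. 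Since $\partial_\eta\Phi\in H^1(\Omega)$, with $\partial_\eta\Phi(\pm1,\eta)=0$ because $\Phi$ vanishes on $\{x=\pm1\}$, each trace $\partial_\eta\Phi(\cdot,j)$ lies in $H^{1/2}(I)\hookrightarrow H^{\alpha_1}(I)$ and the slices $\partial_\eta\Phi(\cdot,\eta)$ lie in $L_2((0,1);H^{\alpha_1}(I))$, all controlled by $\|\partial_\eta\Phi\|_{H^1(\Omega)}$. Pairing these against $\partial_x h_1\in H^{-\alpha_1}(I)$ in the duality between $H^{-\alpha_1}(I)$ and $H^{\alpha_1}(I)$, together with $\|\partial_x h_1\|_{H^{-\alpha_1}(I)}\le\|h_1\|_{H^{1-\alpha_1}(I)}$ and $\|h_2\|_{L_\infty(0,1)}+\|\partial_\eta h_2\|_{L_2(0,1)}\le c\|h_2\|_{H^1(0,1)}$, yields
\begin{equation*}
|R_2|\le c(\kappa)\,\|h_1\|_{H^{1-\alpha_1}(I)}\,\|h_2\|_{H^1(0,1)}\,\|\partial_\eta\Phi\|_{H^1(\Omega)}\ .
\end{equation*}
The delicate point is that $\|\partial_\eta\Phi\|_{H^1(\Omega)}$ reappears on the right-hand side; I would absorb it via \eqref{g13a} and Young's inequality, turning it into a small multiple of $\|\omega-\eta U\zeta\|_{L_2(\Omega)}^2+\|\zeta/(1+u)\|_{L_2(\Omega)}^2$ plus $c(\kappa)\|\partial_\eta\Phi\|_{L_2(\Omega)}^2$, the latter already dominated by the basic $H^1$ bound. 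Inserting this together with \eqref{g14} into \eqref{g10} and absorbing then gives $\|\partial_\eta\Phi\|_{H^1(\Omega)}\le c(\kappa)\|h_1\|_{H^{1-\alpha_1}(I)}\|h_2\|_{H^1(0,1)}$, i.e. the $X(\Omega)$-part of \eqref{g23}.

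The remaining estimates follow as in Lemma~\ref{gle3a}. For $q$ I would use the representation \eqref{g18}: every term other than $h$ is a product of a bounded coefficient with $\omega$, $\zeta$, or $\partial_\eta\Phi$, hence lies in $L_2(\Omega)\hookrightarrow H^{-\alpha_1}(\Omega)$ and is controlled by $\|\Phi\|_{X(\Omega)}$, while $h=\partial_x(h_1 h_2)\in H^{-\alpha_1}(\Omega)$ with $\|h\|_{H^{-\alpha_1}(\Omega)}\le c\|h_1\|_{H^{1-\alpha_1}(I)}\|h_2\|_{H^1(0,1)}$; this gives \eqref{g23b}. Then $\partial_x^2\Phi=q+\eta\,\partial_x U\,\partial_\eta\Phi$, and since $\eta\,\partial_x U\,\partial_\eta\Phi\in H^{-\nu}(\Omega)$ by the pointwise multiplication argument of Lemma~\ref{gle3a} and $H^{-\alpha_1}(\Omega)\hookrightarrow H^{-\nu}(\Omega)$ because $\nu\ge\alpha_1$, one recovers $\Phi\in H^{2-\nu}(\Omega)$ with the bound in \eqref{g23}. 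Finally I would remove the auxiliary regularity assumptions by density, approximating $u$ in $H^{2-\alpha}(I)$ by $W_3^2(I)$-functions and $h_1$ in $H^{1-\alpha_1}(I)$ by $H^1(I)$-functions; the associated right-hand sides converge weakly in $H^{-1}(\Omega)$, so Lemma~\ref{gle2c}, the uniform bounds just obtained, and weak compactness in $X(\Omega)\cap H^{2-\nu}(\Omega)$ pass the estimates to the limit, exactly as in Step~2 of Lemma~\ref{gle3a}.
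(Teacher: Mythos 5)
Your proposal is correct and follows essentially the same route as the paper's proof: the same Step-1 reduction to $u\in W_p^2(I)$, $h_1\in H^1(I)$, the unchanged energy identity \eqref{g10} and $R_1$ bound \eqref{g14}, an integration by parts in $\eta$ that converts $R_2$ into the traces $\partial_\eta\Phi(\cdot,0)$, $\partial_\eta\Phi(\cdot,1)$ paired with $\partial_x h_1$ in the $H^{-\alpha_1}(I)$--$H^{\alpha_1}(I)$ duality plus an interior term, absorption via \eqref{g13a} and Young's inequality, the identity \eqref{g18} for $q$ together with $h\in H^{-\alpha_1}(\Omega)\hookrightarrow H^{-\nu}(\Omega)$ for the $H^{2-\nu}$ regularity, and the same density argument in $u$ and $h_1$ to conclude. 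The only (harmless) cosmetic difference is the interior term: the paper integrates by parts once more in $x$, using $\partial_\eta\Phi(\pm 1,\cdot)=0$, to arrive at $\int_\Omega h_1\,\partial_\eta h_2\,\partial_x\partial_\eta\Phi\ \rd(x,\eta)$ estimated with $\|h_1\|_{L_\infty(I)}$, whereas you estimate $\int_\Omega \partial_x h_1\,\partial_\eta h_2\,\partial_\eta\Phi\ \rd(x,\eta)$ directly by slice-wise duality in $x$ -- the two computations are equivalent.
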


\begin{proof} The proof of Lemma~\ref{gle3b} follows closely that of Lemma~\ref{gle3a}, the main difference being the analysis of the terms involving $h$.

\noindent\textbf{Step~1:} We additionally assume that $u\in W_p^2(I)$ for some $p>2$ and that $h_1\in H^1(I)$. In that case the solution $\Phi$ to \eqref{g5} belongs to $H^2(\Omega)$ according to Lemma~\ref{gle2}~(ii). We then proceed as in the proof of Lemma~\ref{gle3a} and observe that \eqref{g10} as well as the estimate \eqref{g14} on $R_1$, defined in \eqref{g11}, are still valid. To estimate $R_2$, defined in \eqref{g12}, we argue differently. We use twice Green's formula to  get
\begin{align*}
R_2 & = \int_\Omega h_2 \partial_x h_1 \partial_\eta^2 \Phi\ \rd(x,\eta) \\
& =\int_{-1}^1 h_2(1) \partial_x h_1(x) \partial_\eta \Phi(x,1)\ \rd x -\int_{-1}^1 h_2(0) \partial_x h_1(x) \partial_\eta \Phi(x,0)\ \rd x - \int_\Omega \partial_x h_1 \partial_\eta h_2 \partial_\eta \Phi\ \rd(x,\eta) \\
& = h_2(1)\int_{-1}^1 \partial_x h_1(x) \partial_\eta \Phi(x,1)\ \rd x - h_2(0)\int_{-1}^1 \partial_x h_1(x) \partial_\eta \Phi(x,0)\ \rd x + \int_\Omega h_1 \partial_\eta h_2 \partial_x \partial_\eta \Phi\ \rd(x,\eta) \\
& \quad - \int_0^1 h_1(1) \partial_\eta h_2(\eta) \partial_\eta\Phi(1,\eta)\ \rd\eta + \int_0^1 h_1(-1) \partial_\eta h_2(\eta) \partial_\eta\Phi(-1,\eta)\ \rd\eta\ . 
\end{align*}
Recalling that $\Phi(1,\eta)=\Phi(-1,\eta)=0$ for $\eta\in (0,1)$ due to \eqref{g5}, we realize that the last two terms on the right-hand side of the above identity vanish and thus
$$
R_2 = h_2(1)\int_{-1}^1 \partial_x h_1(x) \partial_\eta \Phi(x,1)\ \rd x - h_2(0)\int_{-1}^1 \partial_x h_1(x) \partial_\eta \Phi(x,0)\ \rd x + \int_\Omega h_1 \partial_\eta h_2 \partial_x \partial_\eta \Phi\ \rd(x,\eta)\ .
$$
Using again the notation $U=\partial_x \ln{(1+u)}$, $\omega=\partial_x\partial_\eta \Phi$, and $\zeta = \partial_\eta^2 \Phi$, we deduce from the continuity of the trace operator from $H^1(\Omega)$ to $H^{\alpha_1}(I)$ and the continuous embedding of $H^{1-\alpha_1}(I)$ in $L_\infty(I)$ that
\begin{align*}
|R_2| & \le |h_2(1)| \|\partial_x h_1\|_{H^{-\alpha_1}(I)} \|\partial_\eta \Phi(.,1)\|_{H^{\alpha_1}(I)} + |h_2(0)| \|\partial_x h_1\|_{H^{-\alpha_1}(I)} \|\partial_\eta \Phi(.,0)\|_{H^{\alpha_1}(I)} \\
& \quad + \|h_1\|_{L_\infty(I)} \|\partial_\eta h_2\|_{L_2(0,1)} \|\omega\|_{L_2(\Omega)} \\
& \le c \|h_2\|_{H^1(0,1)} \ \|h_1\|_{H^{1-\alpha_1}(I)}\ \left( \|\partial_\eta \Phi\|_{H^1(\Omega)} + \|\omega - \eta U \zeta \|_{L_2(\Omega)} + \|\partial_x u\|_{L_\infty(I)} \left\| \frac{\zeta}{1+u} \right\|_{L_2(\Omega)} \right)\ .
\end{align*}
Since 
$$
\|\partial_\eta \Phi\|_{L_2(\Omega)} \le c(\kappa) \|h\|_{H_{D}^{-1}(\Omega)} \le c(\kappa) \|h_1 h_2\|_{L_2(\Omega)}
$$
by Lemma~\ref{gle2}~(i), we deduce from \eqref{g13a} that
$$
|R_2| \le c(\kappa) \|h_1\|_{H^{1-\alpha_1}(I)} \ \|h_2\|_{H^1(0,1)} \left( \| h_1 h_2\|_{L_2(\Omega)} + \|\omega-\eta U \zeta\|_{L_2(\Omega)} + \left\| \frac{\zeta}{1+u} \right\|_{L_2(\Omega)} \right)\ .
$$
Young's inequality finally gives
\begin{equation}
|R_2| \le \delta  \|\omega-\eta U \zeta\|_{L_2(\Omega)}^2 + \delta \left\| \frac{\zeta}{1+u} \right\|_{L_2(\Omega)}^2 + c(\kappa,\delta) \|h_1\|_{H^{1-\alpha_1}(I)}^2 \ \|h_2\|_{H^1(0,1)}^2  \label{g24}
\end{equation}
for $\delta\in (0,1)$. Choosing $\delta$ appropriately small in \eqref{g24}, we derive from \eqref{g10}, \eqref{g14}, and \eqref{g24} that 
\begin{equation*}
\begin{split}
\varepsilon^2 \|\omega - \eta U \zeta \|_{L_2(\Omega)}^2 + \left\| \frac{\zeta}{1+u} \right\|_{L_2(\Omega)}^2 &\le c(\kappa) \left( \|h\|_{H_{D}^{-1}(\Omega)}^2 + \|h_1\|_{H^{1-\alpha_1}(I)}^2 \|h_2\|_{H^1(0,1)}^2 \right)\\
& \le c(\kappa) \|h_1\|_{H^{1-\alpha_1}(I)}^2 \|h_2\|_{H^1(0,1)}^2\ .
\end{split} 
\end{equation*}
Therefore, since $u\in \overline{S}_2^{2-\alpha}(\kappa)$, we conclude as in the proof of Lemma~\ref{gle3a} that $\Phi$ belongs to $X(\Omega)$ with 
\begin{equation}
\|\Phi\|_{X(\Omega)} \le c(\kappa) \|h_1\|_{H^{1-\alpha_1}(I)} \|h_2\|_{H^1(0,1)}\ . \label{g25}
\end{equation}
Recalling the definition \eqref{g8a} and arguing as in the proof of \eqref{g17}, we infer from \eqref{g5} and \eqref{g25} that 
\begin{equation}
\|\varepsilon^2 q - h \|_{L_2(\Omega)} \le c(\kappa) \|h_1\|_{H^{1-\alpha_1}(I)} \|h_2\|_{H^1(I)}\ . \label{g26}
\end{equation}
On the one hand, the regularity \eqref{g21} of $h$ ensures that $h\in H^{-\alpha_1}(\Omega)$ and we deduce from \eqref{g26} that
\begin{equation}
\|q\|_{H^{-\alpha_1}(\Omega)} \le c(\kappa) \|h_1\|_{H^{1-\alpha_1}(I)} \|h_2\|_{H^1(0,1)}\ . \label{g27}
\end{equation}
On the other hand, arguing as in the proof of Lemma~\ref{gle3a}, we obtain from \eqref{g24} that 
$$
[(x,\eta)\mapsto \eta \partial_x U \partial_\eta\Phi] \in H^{-\nu}(\Omega) \;\;\text{ with }\;\; \left\| \eta \partial_x U \partial_\eta\Phi \right\|_{H^{-\nu}(\Omega)} \le c(\kappa) \|h_1\|_{H^{1-\alpha_1}(I)} \|h_2\|_{H^1(0,1)}\ ,
$$
while the regularity \eqref{g21} of $h$ and the choice of $\nu\ge \alpha_1$ entail that $h\in H^{-\nu}(\Omega)$. We combine these facts with \eqref{g25} and \eqref{g27} to conclude that $\Phi\in H^{2-\nu}(\Omega)$ satisfies
$$
\|\Phi\|_{H^{2-\nu}(\Omega)} \le c(\kappa) \|h_1\|_{H^{1-\alpha_1}(\Omega)} \|h_2\|_{H^1(0,1)}\ .
$$
We have thereby established Lemma~\ref{gle3b} for all functions $u\in\overline{S}_2^{2-\alpha}(\kappa)$ and $h\in H^{-1}(\Omega)$ satisfying \eqref{g21} under the additional assumption that $u\in W_p^2(I)$ and $h_1\in H^1(I)$.  

\noindent\textbf{Step~2:} We now consider $u\in\overline{S}_2^{2-\alpha}(\kappa)$ and $h\in H^{-\nu}(\Omega)$ satisfying \eqref{g21}. Classical approximation arguments guarantee that there are sequences $(u_n)_{n\ge 1}$ in $ W_3^2(I)$ and $(h_{1,n})_{n\ge 1}$ in $H^1(I)$ such that
$$
\lim_{n\to \infty} \|u_n - u \|_{H^{2-\alpha}(I)} = \lim_{n\to \infty} \|h_{1,n} - h_1 \|_{H^{1-\alpha_1}(I)} = 0\ .
$$
We then proceed as in the second step of the proof of Lemma~\ref{gle3a} to complete the proof of Lemma~\ref{gle3b}. 
\end{proof}

\subsection{Proof of Proposition~\ref{gth1} and Corollary~\ref{gth0}}\label{subsec:42}

We are now in a position to complete the proof of Proposition~\ref{gth1} by considering the particular right-hand side $f_u$ of \eqref{g2} given in \eqref{ff}. For the remainder of this subsection, we set 
$$
U(x) := \frac{\partial_x u(x)}{1+u(x)}\ ,\quad x\in I\ ,
$$
so that
$$
f_u(x,\eta) = \varepsilon^2 \eta \left[ \partial_x U(x) - U(x)^2 \right]\ , \quad (x,\eta)\in \Omega\ .
$$

\begin{proof}[Proof of Proposition~\ref{gth1}]
Let $u\in \overline{S}_2^{2-\alpha}(\kappa)$.
We handle the cases $\alpha=0$ and $\alpha\in (0,1/2)$ separately.

\smallskip

\noindent\textbf{Case~1: $\alpha=0$.} In that case, $u\in H^2(I)$ from which we readily infer that
\begin{equation*}
f_u \in L_2(\Omega) \quad \text{ and }\quad \|f_u\|_{L_2}\le c(\kappa)\ . 
\end{equation*} 
Fix $\nu\in (0,1/2)$. It follows from Lemma~\ref{gle2} and Lemma~\ref{gle3a} with $h=f_u$ that \eqref{g2} has a unique solution $\Phi_u\in X(\Omega)\cap H^{2-\nu}(\Omega)$ which satisfies \eqref{g3}. Moreover, the distribution $q_u$ defined by \eqref{g1a} belongs to $L_2(\Omega)$ according to Lemma~\ref{gle3a}, and \eqref{g3a} follows from \eqref{g8b}. 

Now, if $(u_n)_{n\ge 1}$ is a sequence in $\overline{S}_2^{2}(\kappa)$ converging weakly in $H^{2}(I)$ toward $u\in \overline{S}_2^{2}(\kappa)$, the compactness of the embedding of $H^2(I)$ in $W_\infty^1(I)$ entails that $(f_{u_n})_{n\ge 1}$ converges weakly toward $f_u$ in $L_2(\Omega)$. Hence, due to Lemma~\ref{gle2c}, $(\Phi_{u_n})_{n\ge 1}$ converges weakly toward $\Phi_u$ in $H^1(\Omega)$. Since $(\Phi_{u_n})_{n\ge 1}$ is actually bounded in $X(\Omega)\cap H^{2-\nu}(\Omega)$ by \eqref{g3}, the above convergence can readily be improved to \eqref{g4}. The compactness of the embedding of $H^{2-\nu}(\Omega)$ in $H^1(\Omega)$ finally guarantees the strong convergence of $(\Phi_{u_n})_{n\ge 1}$ toward $\Phi_u$ in $H^1(\Omega)$.

\smallskip

\noindent\textbf{Case~2: $\alpha\in (0,1/2)$.}
In that case the space $H^{1-\alpha}(I)$ is an algebra so that both $U$ and $U^2$ belong to $H^{1-\alpha}(I)$. Introducing
$$
f_1(x) := \varepsilon^2 \left[ U(x) - \int_0^x U^2(x')\ \rd x' \right]\ , \quad x\in I\ ,
$$
we realize that 
\begin{equation*}
f_u(x,\eta) = \eta \partial_x f_1(x) \quad \text{ with }\quad \|f_1\|_{H^{1-\alpha}}\le c(\kappa) 
\end{equation*} 
for some positive constant $c(\kappa)$ depending only on $\varepsilon$, $\alpha$, and $\kappa$. Fix $\nu\in (\alpha,1/2)$. We infer from Lemma~\ref{gle2} and Lemma~\ref{gle3b} with $h=f_u$ and $\alpha_1=\alpha$ that \eqref{g2} has a unique solution $\Phi_u\in X(\Omega)\cap H^{2-\nu}(\Omega)$ which satisfies \eqref{g3}. Also the distribution $q_u$ defined in \eqref{g1a} belongs to $H^{-\alpha}(\Omega)$ by Lemma~\ref{gle3b}, and \eqref{g3a} follows from \eqref{g23b}. Finally, the proof of the continuity property stated in Proposition~\ref{gth1} is the same as in the previous case $\alpha=0$.
\end{proof}

Finally, we may apply the information gathered on the equation \eqref{g2} for $\Phi_u$ to the problem \eqref{psieq}-\eqref{psibc} for $\psi_u$ and prove  Corollary~\ref{gth0}.

\begin{proof}[Proof of Corollary~\ref{gth0}]
Let $\alpha\in [0,1/2)$ and $u\in S^{2-\alpha}$. Since $H^{2-\alpha}(I)$ embeds continuously in $C([-1,1])$ there clearly is some $\kappa\in (0,1)$ such that $u\in S_2^{2-\alpha}(\kappa)$. Let $\Phi_u$ and $\psi_u$ be the unique solution to \eqref{g2} and respectively \eqref{psieq}-\eqref{psibc} and recall that
$$
\psi_u(x,z) = \Phi_u(x,\eta)+\eta
$$
for $(x,z)\in\Omega(u)$ and $(x,\eta)\in\Omega$ with $(x,z)=(x,-1+(1+u(x))\eta)$. Straightforward computations then give
\begin{eqnarray*}
\partial_x^2 \psi_u(x,z) & = & \partial_x^2 \Phi_u(x,\eta) - \eta \partial_x U(x) \partial_\eta \Phi_u(x,\eta) - 2 \eta U(x) \partial_x \partial_\eta \Phi_u(x,\eta) \\
& & \  + \eta^2 U(x)^2 \partial_\eta^2 \Phi_u(x,\eta) + \eta U(x)^2 \partial_\eta \Phi_u(x,\eta) + \eta \left[ U^2 - \partial_x U \right] (x)\ , \\
\partial_x \partial_z \psi_u(x,z) & = & \frac{1}{1+u(x)} \partial_x \partial_\eta \Phi_u(x,\eta) - \eta \frac{U(x)}{1+u(x)} \partial_\eta^2 \Phi_u(x,\eta) - \frac{U(x)}{1+u(x)} \left[ 1 + \partial_\eta \Phi_u(x,\eta) \right]\ , \\
\partial_z^2 \psi_u(x,z) & = & \frac{1}{(1+u(x))^2} \partial_\eta^2 \Phi_u(x,\eta)\ ,
\end{eqnarray*}
where $U:= \partial_x\ln{(1+u)}$. It readily follows from the regularity of $u$ and Proposition~\ref{gth1} that $\partial_x \partial_z \psi_u$ and $\partial_z^2 \psi_u$ both belong to $L_2(\Omega(u))$. As for $\partial_x^2 \psi_u$, it also reads
$$
\partial_x^2 \psi_u= q_u + r_u + s_u 
$$
with
\begin{align*}
r_u(x,\eta) & :=- 2 \eta U(x) \partial_x \partial_\eta \Phi_u(x,\eta)  + \eta^2 U(x)^2 \partial_\eta^2 \Phi_u(x,\eta) + \eta U(x)^2 \partial_\eta \Phi_u(x,\eta) \ , \\
s_u (x,\eta) & := \eta \left[ U(x)^2 - \partial_x U(x) \right]\ ,
\end{align*} 
for $(x,\eta)\in \Omega$, the distribution $q_u$ being defined in \eqref{g1a}. The regularity of $u$ and Proposition~\ref{gth1} imply $r_u\in L_2(\Omega)$ while the distributions $q_u$ and $s_u$ both belong to $H^{-\alpha}(\Omega)$. Consequently, $\psi_u\in  H^{2-\alpha}(\Omega(u))$.

As for the continuity of $g$ recall that $g(u)$ may be written alternatively as
\begin{equation*}
g(u)(x)=\frac{1+\varepsilon^2\vert\partial_x u(x)\vert^2}{(1+u(x))^2}\vert\partial_\eta\Phi_u(x,1)\vert^2\ ,\quad x\in I\  .
\end{equation*}
Let $(u_n)_{n\ge 1}$ be any sequence in $S_2^{2-\alpha}(\kappa)$ with $u_n\rightarrow u$ in $H^{2-\alpha}(I)$. Then, for each $s\in (0,1/2)$, the convergence \eqref{g4} and the compactness of the embedding of $H^1(\Omega)$ in $H^{1-s}(\Omega)$ imply that
$$
\partial_\eta\Phi_{u_n}\rightarrow \partial_\eta \Phi_u\ \text{ in } \ H^{1-s}(\Omega)
$$
and thus, according to \cite[Theorem 1.5.1.2]{Gr85},
$$
\partial_\eta\Phi_{u_n}(\cdot,1)\rightarrow \partial_\eta \Phi_u(\cdot,1)\ \text{ in } \ H^{1/2-s}(I)\ .
$$
Since pointwise multiplication
$$
H^{1-\alpha}(I)\cdot H^{1-\alpha}(I)\cdot H^{1/2-s}(I)\cdot H^{1/2-s}(I) \hookrightarrow H^{\sigma}(I)
$$
is continuous for each $\sigma\in [0,1/2-2s)$ according to  \cite[Theorem~4.1 \& Remark~4.2(d)]{Am91}, we conclude that $g(u_n)\rightarrow g(u)$ in $H^\sigma(I)$ and thus the continuity of $g : S_2^{2-\alpha}(\kappa)\rightarrow H^\sigma(I)$ for all $\sigma\in [0,1/2)$ as $s\in (0,1/2)$ is arbitrary.
\end{proof}



\end{document}